 \patchcmd{\@setaddresses}{\scshape\ignorespaces}{\ignorespaces}{}{} 
\appto\maketitle{%
\let\@makefnmark\relax  \let\@thefnmark\relax
\ifx\@empty\addresses\else\@footnotetext{%
  \vskip-\bigskipamount\@setaddresses}
  }
\def\enddoc@text{}
\shorttitle}
\@nx\MakeUppercase{\the\toks@}}
\patchcmd\@settitle{\uppercasenonmath\@title}{\Large}{}{}
\authors}
\newtheorem{theorem}{Theorem}[section]
\newtheorem{definition}{Definition}[section]
\newtheorem{corollary}{Corollary}[section]
\newtheorem{proposition}{Proposition}[section]
\newtheorem{lemma}{Lemma}[section]
\newtheorem{remark}{Remark}[section]
\newtheorem{example}{Example}[section]
\numberwithin{equation}{section}
\begin{document}
\address{$^{[1]}$ University of Sfax, Tunisia.}
\email{\url{kais.feki@hotmail.com}}

\address{$^{[2]}$ Mathematics Department, College of Science, Jouf University, P.O. Box 2014, Sakaka, Saudi Arabia.}
\email{\url{sidahmed@ju.edu.sa}}

\subjclass[2010]{Primary 46C05, 47A12; Secondary 47B65, 47A12.}

\keywords{Semi-inner product, Davis-Wielandt shells, numerical radius, normaloid operator, norm-parallelism, Davis-Wielandt radius}

\date{\today}
\author[Kais Feki and Sid Ahmed Ould Ahmed Mahmoud] {\Large{Kais Feki}$^{1}$ and \Large{Sid Ahmed Ould Ahmed Mahmoud}$^{2}$ }
\title[Davis-Wielandt shells of semi-Hilbertian space operators and its applications]{Davis-Wielandt shells of semi-Hilbertian space operators and its applications}

\maketitle

\begin{abstract}
In this paper we generalize the concept of Davis-Wielandt shell of operators on a Hilbert space when a semi-inner product induced by a positive operator $A$ is considered. Moreover, we investigate the parallelism of $A$-bounded operators with respect to the seminorm and the numerical radius induced by $A$. Mainly, we characterize $A$-normaloid operators in terms of their $A$-Davis-Wielandt radii. In addition, a connection between $A$-seminorm-parallelism to the identity operator and an equality condition for the $A$-Davis-Wielandt radius is proved. This generalizes the well-known results in \cite{zamanilma2018,chanchan}. Some other related results are also discussed.
\end{abstract}

\section{Introduction and Preliminaries}\label{s1}
Let $\big(\mathcal{H}, \langle\cdot\mid\cdot\rangle\big)$ be a non trivial complex equipped with the norm $\|\cdot\|$. Let $\mathcal{B}(\mathcal{H})$ denote the $C^{\ast}$-algebra of all bounded linear operators on $\mathcal{H}$ with identity $I_{\mathcal{H}}$ (or $I$ if no confusion arises). For $T\in\mathcal{B}(\mathcal{H})$, the classical numerical range of $T$ was introduced by Toeplitz in \cite{t1} as
$$W(T):=\{\langle T x\mid x\rangle;\;x \in \mathcal{H}\;\;\text{with}\;\|x\|=1\}.$$
For more information about this concept, the reader is invited to consult \cite{band,li,gus} and the references therein. Also, the numerical radius of $T\in \mathcal{B}(\mathcal{H})$ is given by
\begin{equation*}
\omega(T) = \sup \big\{|\mu|\,;\; \mu\in W(T)\big\}.
\end{equation*}
The concepts of numerical range and numerical radius play important roles in many different areas, especially mathematics and physics (see \cite{G.R, H.J,bakfeki02}). There is a rich variety of generalizations of the notion of the numerical range and the numerical radius. The reader may consult for example \cite{givens}. One of these generalizations is the Davis-Wielandt shell and radius of an operator $T\in\mathcal{B}(\mathcal{H})$ which are given by
 $$DW(T):=\left\{(\langle T x\mid x\rangle, \langle T x\mid Tx\rangle)\,;\;x \in \mathcal{H}\;\;\text{with}\;\|x\|=1\right\},$$
 and
 $$d\omega(T)=\displaystyle\sup\left\{\sqrt{|\langle Tx\mid x\rangle|^2+\|Tx\|^4}\,;\;x\in \mathcal{H},\;\|x\|=1\right\},$$
 respectively. The concept of the Davis-Wielandt shell is useful in studying operators. For more details see \cite{lipoonsze} and the references therein.

Let $\mathbb{T}$ denote the unit cycle of the complex plane, i.e. $\mathbb{T}=\{\lambda\in \mathbb{C}\,;\;|\lambda|=1 \}$. The notion of the norm-parallelism for Hilbert space operators has been introduced by A. Zamani et al. in \cite{zamanilma2015} as follows.
\begin{definition}
Let $T,S\in \mathcal{B}(\mathcal{H})$. We say that $T$ is norm-parallel to $S$, in short $T\parallel S$, if there exists $\lambda\in \mathbb{T}$ such that
$$\|T+\lambda S\|=\|T\|+\|S\|.$$
\end{definition}
 Some characterizations of the norm-parallelism for Hilbert space operators were given in \cite{zamanilma2015,Z.M.2}. In particular, we have the following useful theorem.
 \begin{theorem}\label{paranorm}(\cite[Corollary 2.12.]{Z.M.2})
Let $T,S\in \mathcal{B}(\mathcal{H})$. Then, the following assertions are equivalent:
\begin{itemize}
  \item [(1)] $T\parallel S$.
  \item [(2)] There exists a sequence $(x_n)_n\subset\mathcal{H}$ such that $\|x_n\|=1$ and
    \begin{equation*}
    \lim_{n\to \infty}|\langle T x_n\mid Sx_n\rangle|=\|T\|\|S\|.
    \end{equation*}
\end{itemize}
\end{theorem}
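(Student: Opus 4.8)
The plan is to establish the two implications directly from the definition of norm-parallelism, using as the single computational engine the identity
\begin{equation*}
\|(T+\lambda S)x\|^2=\|Tx\|^2+\|Sx\|^2+2\operatorname{Re}\big(\overline{\lambda}\langle Tx\mid Sx\rangle\big),\qquad \|x\|=1,\ \lambda\in\mathbb{T},
\end{equation*}
together with the Cauchy--Schwarz bound $|\langle Tx\mid Sx\rangle|\le\|Tx\|\,\|Sx\|\le\|T\|\,\|S\|$. I would first dispose of the degenerate case $\|T\|\,\|S\|=0$: if, say, $S=0$, then $T\parallel S$ holds with any $\lambda$ (both sides of the defining equality reduce to $\|T\|$), while condition $(2)$ holds with any unit sequence since both $|\langle Tx_n\mid Sx_n\rangle|$ and $\|T\|\|S\|$ vanish; so the equivalence is trivial there and I may assume $\|T\|,\|S\|>0$.

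For $(2)\Rightarrow(1)$, I would start from a sequence $(x_n)$ with $\|x_n\|=1$ and $z_n:=\langle Tx_n\mid Sx_n\rangle$ satisfying $|z_n|\to\|T\|\|S\|>0$. Since $\mathbb{T}$ is compact, after passing to a subsequence the phases $z_n/|z_n|$ converge to some $e^{i\theta}\in\mathbb{T}$; I set $\lambda=e^{i\theta}$, so that $\operatorname{Re}(\overline{\lambda}z_n)=|z_n|\operatorname{Re}\big(\overline{\lambda}\,z_n/|z_n|\big)\to\|T\|\|S\|$. The squeeze $|z_n|\le\|Tx_n\|\|Sx_n\|\le\|T\|\|S\|$ forces $\|Tx_n\|\to\|T\|$ and $\|Sx_n\|\to\|S\|$ (by the technical lemma stated below), whence the displayed identity gives $\|(T+\lambda S)x_n\|^2\to\|T\|^2+\|S\|^2+2\|T\|\|S\|=(\|T\|+\|S\|)^2$. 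Combined with the triangle inequality $\|T+\lambda S\|\le\|T\|+\|S\|$, this yields $\|T+\lambda S\|=\|T\|+\|S\|$, i.e. $T\parallel S$.

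For $(1)\Rightarrow(2)$, I would fix $\lambda\in\mathbb{T}$ with $\|T+\lambda S\|=\|T\|+\|S\|$ and choose, by definition of the operator norm, unit vectors $x_n$ with $\|(T+\lambda S)x_n\|\to\|T\|+\|S\|$. From $\|(T+\lambda S)x_n\|\le\|Tx_n\|+\|Sx_n\|\le\|T\|+\|S\|$ the same squeeze argument gives $\|Tx_n\|\to\|T\|$ and $\|Sx_n\|\to\|S\|$, so $\|Tx_n\|^2+\|Sx_n\|^2\to\|T\|^2+\|S\|^2$. Subtracting this from the identity applied to $x_n$ shows $\operatorname{Re}\big(\overline{\lambda}\langle Tx_n\mid Sx_n\rangle\big)\to\|T\|\|S\|$. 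Finally the elementary inequalities $\operatorname{Re}\big(\overline{\lambda}\langle Tx_n\mid Sx_n\rangle\big)\le|\langle Tx_n\mid Sx_n\rangle|\le\|Tx_n\|\|Sx_n\|\le\|T\|\|S\|$ squeeze $|\langle Tx_n\mid Sx_n\rangle|$ to $\|T\|\|S\|$, which is $(2)$.

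The routine but essential technical point, used in both directions, is the lemma: if $0\le a_n\le A$ and $0\le b_n\le B$ with $A,B>0$ and $a_nb_n\to AB$, then $a_n\to A$ and $b_n\to B$; I would prove it by a standard subsequence-extraction-and-contradiction argument. I expect the only genuinely delicate step to be the passage to a single fixed $\lambda$ in $(2)\Rightarrow(1)$: condition $(2)$ only supplies phases that may oscillate with $n$, whereas the definition of parallelism demands one constant $\lambda\in\mathbb{T}$, so the compactness of the unit circle, and the resulting subsequence extraction, is exactly what bridges the two formulations.
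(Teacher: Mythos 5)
Your proposal is correct and coincides in all essentials with the paper's method: the paper quotes this statement from \cite[Corollary 2.12]{Z.M.2} without reproving it, but its proof of the semi-Hilbertian generalization (Theorem \ref{main1}) is precisely your argument --- the expansion of $\|(T+\lambda S)x_n\|^2$, the Cauchy--Schwarz squeeze forcing $\|Tx_n\|\to\|T\|$ and $\|Sx_n\|\to\|S\|$, and the compactness of $\mathbb{T}$ to extract a single fixed phase $\lambda$ in the direction $(2)\Rightarrow(1)$. There are no gaps; your explicit treatment of the degenerate case $\|T\|\,\|S\|=0$ and of the product-squeeze lemma merely spells out steps the paper leaves implicit.
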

Recently, the relation between the norm-parallelism of operators and their Davis-Wielandt radii is discussed. In particular, we recall the following theorem.
\begin{theorem}\label{thm001}(\cite[Theorem 3.1]{zamanilma2018})
Let $T\in \mathcal{B}(\mathcal{H})$. Then, $T\parallel I$ if and only if $d\omega(T)=\sqrt{\omega(T)^2+\|T\|^4}$.
\end{theorem}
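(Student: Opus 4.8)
The plan is to reduce the statement, via \Cref{paranorm}, to a normaloid criterion and to isolate the one genuinely nontrivial implication. Two preliminary observations drive everything. First, for every unit vector $x$ we have $|\langle Tx\mid x\rangle|^2\le\omega(T)^2$ and $\|Tx\|^4\le\|T\|^4$, so taking the supremum gives the unconditional bound $d\omega(T)\le\sqrt{\omega(T)^2+\|T\|^4}$; hence the theorem is really about when equality occurs. Second, \Cref{paranorm} applied with $S=I$ says that $T\parallel I$ holds exactly when there is a unit sequence $(x_n)$ with $|\langle Tx_n\mid x_n\rangle|\to\|T\|$, and since $|\langle Tx\mid x\rangle|\le\omega(T)\le\|T\|$ always, this happens if and only if $\omega(T)=\|T\|$. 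Thus I would prove the equivalent statement $\omega(T)=\|T\|\iff d\omega(T)=\sqrt{\omega(T)^2+\|T\|^4}$.

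The forward implication is immediate: if $\omega(T)=\|T\|$, choose unit $x_n$ with $|\langle Tx_n\mid x_n\rangle|\to\|T\|$; the squeeze $|\langle Tx_n\mid x_n\rangle|\le\|Tx_n\|\le\|T\|$ forces $\|Tx_n\|\to\|T\|$, so $|\langle Tx_n\mid x_n\rangle|^2+\|Tx_n\|^4\to\|T\|^2+\|T\|^4=\omega(T)^2+\|T\|^4$, and together with the universal upper bound this yields equality.

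For the converse I would start from a sequence realizing the supremum: unit $x_n$ with $|\langle Tx_n\mid x_n\rangle|^2+\|Tx_n\|^4\to\omega(T)^2+\|T\|^4$. Since each summand is dominated by the matching term on the right, both must saturate, so $\|Tx_n\|\to\|T\|$ and $|\langle Tx_n\mid x_n\rangle|\to\omega(T)$ along the \emph{same} sequence. After a harmless rotation $T\mapsto e^{-i\phi}T$ (and passing to a subsequence) I may assume $\langle Tx_n\mid x_n\rangle\to\omega:=\omega(T)$. The key technical step is to upgrade these scalar limits to the approximate eigenrelations $(T^*T-\|T\|^2)x_n\to0$ and $(\operatorname{Re}(T)-\omega)x_n\to0$, where $\operatorname{Re}(T):=\tfrac12(T+T^*)$: for self-adjoint $B\le\beta I$ with $\langle Bx_n\mid x_n\rangle\to\beta$, the operator $C:=\beta I-B\ge0$ obeys $\|C^{1/2}x_n\|^2=\langle Cx_n\mid x_n\rangle\to0$, whence $(B-\beta)x_n\to0$; one applies this with $\beta=\|T\|^2$ for $B=T^*T$ and with $\beta=\omega=\lambda_{\max}(\operatorname{Re}(T))$ for $B=\operatorname{Re}(T)$, the latter value being forced by the rotation normalization.

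If $\omega=\|T\|$ the sequence $(x_n)$ already witnesses $T\parallel I$. Otherwise put $s:=\sqrt{\|T\|^2-\omega^2}>0$, let $z_n:=Tx_n-\langle Tx_n\mid x_n\rangle x_n$ be the part of $Tx_n$ orthogonal to $x_n$ (so $\|z_n\|\to s$), set $y_n:=z_n/\|z_n\|$, and form the unit vectors $u_n:=\tfrac1{\sqrt2}(x_n+i\,y_n)$. The crux—and the step I expect to be the main obstacle—is to evaluate the four terms of $\langle Tu_n\mid u_n\rangle=\tfrac12\big(\langle Tx_n\mid x_n\rangle+\langle Tx_n\mid iy_n\rangle+\langle iTy_n\mid x_n\rangle+\langle iTy_n\mid iy_n\rangle\big)$ asymptotically. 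Feeding the two eigenrelations into $T^*y_n=\big(T^*Tx_n-\langle Tx_n\mid x_n\rangle T^*x_n\big)/\|z_n\|$ yields $T^*y_n\to s\,x_n+\omega\,y_n$, which in turn gives $\langle Tx_n\mid y_n\rangle\to s$, $\langle Ty_n\mid x_n\rangle\to-s$, and the delicate fourth term $\langle Ty_n\mid y_n\rangle\to\omega$; assembling these gives $\langle Tu_n\mid u_n\rangle\to\omega-is$, of modulus $\sqrt{\omega^2+s^2}=\|T\|$. By \Cref{paranorm} this sequence witnesses $T\parallel I$, completing the argument. The hard part is precisely this last extraction—that the compression of $T$ to the asymptotic span of $x_n,y_n$ behaves like the scaled rotation $\left(\begin{smallmatrix}\omega&-s\\ s&\omega\end{smallmatrix}\right)$—since it is the only place where both eigenrelations are needed simultaneously and it is exactly what forces normaloidity.
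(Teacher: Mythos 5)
Your argument is correct, but note that the paper itself supplies no proof of this statement: \Cref{thm001} is quoted verbatim from \cite[Theorem 3.1]{zamanilma2018}, just as \Cref{paranorm} and \Cref{thm002} are quoted, so there is no internal proof to compare against. Measured against the paper's toolkit, your proof splits into two parts. The first part --- $T\parallel I$ if and only if $\omega(T)=\|T\|$, obtained from \Cref{paranorm} with $S=I$ and the squeeze $|\langle Tx\mid x\rangle|\le\omega(T)\le\|T\|$ --- is exactly the reduction the paper exploits later at the $A$-level (it is the content of the equivalence $(1)\Leftrightarrow(3)$ of \Cref{equinew2}, via \Cref{charactenormailoid}). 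The second part is where you genuinely diverge: instead of quoting the Chan--Chan criterion (\Cref{thm002}), which together with the first part finishes the proof in two lines, you re-prove it from scratch. I verified the computation: both bounds saturate along one sequence; $B\le\beta I$ with $\langle Bx_n\mid x_n\rangle\to\beta$ does give $(B-\beta)x_n\to0$ via $C^{1/2}$; writing $Tx_n=\alpha_n x_n+\|z_n\|y_n$ one gets $T^*x_n-(\omega x_n-sy_n)\to0$ from the $\operatorname{Re}(T)$ relation, then $T^*y_n-(sx_n+\omega y_n)\to0$ from the $T^*T$ relation, and the four terms $\omega,-is,-is,\omega$ indeed assemble to $\langle Tu_n\mid u_n\rangle\to\omega-is$ of modulus $\sqrt{\omega^2+s^2}=\|T\|$; the rotation and passage to a subsequence are harmless since $\omega(e^{-i\phi}T)=\omega(T)$, $d\omega(e^{-i\phi}T)=d\omega(T)$, and $T\parallel I$ is $\mathbb{T}$-invariant (the degenerate case $\omega(T)=0$ forces $T=0$, which is trivially parallel to $I$). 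One presentational remark: in the branch $\omega<\|T\|$ your witness satisfies $|\langle Tu_n\mid u_n\rangle|\to\|T\|>\omega(T)$, which is impossible since the numerical radius dominates every such term; so that branch is really a reductio showing $s=0$, i.e.\ normaloidity, and its hypotheses are inconsistent. Your conclusion via \Cref{paranorm} remains logically valid (each step is a correct consequence of the branch hypotheses, and both branches yield $T\parallel I$), but phrasing it as a contradiction would be cleaner and would make explicit that your two-dimensional compression argument is precisely a proof of \Cref{thm002}. What your route buys is self-containedness: modulo \Cref{paranorm} you need neither \cite{chanchan} nor \cite{zamanilma2018}; what it costs is length, since the paper's citation-based treatment (and its $A$-generalizations in \Cref{equinew} and \Cref{equinew2}) obtains the same chain of equivalences by transporting everything to the Hilbert space $\mathbf{R}(A^{1/2})$ and quoting the classical facts.
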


Recall from \cite{chanchan} that an operator $T\in \mathcal{B}(\mathcal{H})$ is said to be normaloid if $\omega(T)=\|T\|$. A characterization of normaloid operators is given in the following theorem.
\begin{theorem}\label{thm002}(\cite[Proposition 2.]{chanchan})
Let $T\in \mathcal{B}(\mathcal{H})$. Then, $T$ is normaloid if and only if $d\omega(T)=\sqrt{\omega(T)^2+\|T\|^4}$.
\end{theorem}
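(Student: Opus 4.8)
The plan is to reduce the statement to the parallelism characterization already recorded in \Cref{thm001}, rather than to compute with the Davis--Wielandt radius directly. Indeed, \Cref{thm001} asserts that $T\parallel I$ holds if and only if $d\omega(T)=\sqrt{\omega(T)^2+\|T\|^4}$, so it suffices to prove the purely metric equivalence that $T$ is normaloid if and only if $T\parallel I$. Once this is established, the desired equivalence follows by simply chaining it with \Cref{thm001}, and no further analysis of the quantity $d\omega(T)$ is needed.

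To prove that $T$ is normaloid if and only if $T\parallel I$, I would apply \Cref{paranorm} with $S=I$. Since $\|I\|=1$, that theorem says $T\parallel I$ holds exactly when there is a sequence $(x_n)_n$ of unit vectors with $|\langle Tx_n\mid x_n\rangle|\to\|T\|$. The remaining point is to recognise this sequential condition as the equality $\omega(T)=\|T\|$. First I would note the elementary bounds $|\langle Tx\mid x\rangle|\le\omega(T)\le\|T\|$, valid for every unit vector $x$. If such a sequence $(x_n)_n$ exists, then $\|T\|=\lim_n|\langle Tx_n\mid x_n\rangle|\le\omega(T)\le\|T\|$, forcing $\omega(T)=\|T\|$, i.e. $T$ is normaloid. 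Conversely, if $\omega(T)=\|T\|$, then by the very definition of $\omega(T)$ as a supremum there is a sequence of unit vectors along which $|\langle Tx_n\mid x_n\rangle|\to\omega(T)=\|T\|$, which is precisely the condition furnished by \Cref{paranorm}; hence $T\parallel I$.

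I expect the only delicate step to be this translation between the supremum defining $\omega(T)$ and the approximating sequence appearing in \Cref{paranorm}; everything else is formal. The trivial case $T=0$ should be dispatched separately, since then $\omega(T)=\|T\|=0$ and the claimed equality both hold.

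For robustness I would also keep in reserve a direct argument for the easy implication: if $T$ is normaloid and $(x_n)_n$ is a sequence of unit vectors with $|\langle Tx_n\mid x_n\rangle|\to\omega(T)=\|T\|$, then Cauchy--Schwarz gives $|\langle Tx_n\mid x_n\rangle|\le\|Tx_n\|\le\|T\|$, so that $\|Tx_n\|\to\|T\|$ as well; consequently $|\langle Tx_n\mid x_n\rangle|^2+\|Tx_n\|^4\to\omega(T)^2+\|T\|^4$, which together with the always-valid bound $d\omega(T)\le\sqrt{\omega(T)^2+\|T\|^4}$ yields the equality without appeal to \Cref{thm001}. The genuinely non-trivial converse --- that the equality forces normaloidity --- is exactly what the combination of \Cref{thm001} and \Cref{paranorm} supplies cheaply, which is why routing the proof through the parallelism relation $T\parallel I$ is the most efficient strategy.
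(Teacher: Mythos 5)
Your proposal is correct, but it cannot be ``the same as the paper's proof'' for the simple reason that the paper offers no proof of \Cref{thm002} at all: the statement is imported verbatim from \cite[Proposition 2]{chanchan} and is later consumed as a black box (it is the engine behind \Cref{equinew}, via the lift $T\mapsto\widetilde{T}$ to $\mathbf{R}(A^{1/2})$). What you do instead is derive \Cref{thm002} as a formal consequence of the other two quoted results: \Cref{paranorm} with $S=I$ identifies $T\parallel I$ with the existence of unit vectors $x_n$ satisfying $|\langle Tx_n\mid x_n\rangle|\to\|T\|$, which by the sandwich $\|T\|=\lim_n|\langle Tx_n\mid x_n\rangle|\le\omega(T)\le\|T\|$ in one direction, and by extracting a maximizing sequence for the supremum defining $\omega(T)$ in the other, is equivalent to $\omega(T)=\|T\|$; chaining with \Cref{thm001} then yields the equivalence with $d\omega(T)=\sqrt{\omega(T)^2+\|T\|^4}$. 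This is precisely the chain $(1)\Leftrightarrow(2)\Leftrightarrow(3)$ that the paper later establishes in the $A$-setting as \Cref{equinew2}, so in effect you have specialized that theorem's architecture to $A=I$, whereas the paper obtains the step corresponding to $(2)\Leftrightarrow(3)$ by citation rather than by reproof. As to what each route buys: your derivation is cheap but transfers all the analytic content into \Cref{thm001} (Chan--Chan's original argument is a direct computation with maximizing sequences, independent of any parallelism notion, which is why the two citations function as logically independent inputs in the paper); within this paper's presentation your argument is sound and non-circular, since \Cref{thm001} is quoted as an independently proved result, but it would become circular in any source where the parallelism criterion is itself deduced from the normaloid characterization, so the dependence is worth flagging explicitly. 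Two small remarks: your reserve direct argument for the easy direction is correct and is exactly the $A=I$ instance of the upper bound recorded in assertion $(5)$ of \Cref{propositionsum}; and the separate treatment of $T=0$ is unnecessary, since the sequential characterization handles it with a constant sequence (only $\|I\|=1$, i.e.\ $\mathcal{H}\neq\{0\}$, is needed to apply \Cref{paranorm}).
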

One main target of this paper is to extend Theorems \ref{paranorm} and \ref{thm002} to the context of semi-Hilbertian space operators.

 Recently, a new type of parallelism for Hilbert space operators based on numerical radius has been introduced by M. Mehrazin et al. in \cite{mehamzamani} as follows.
\begin{definition}\label{de.01}
An element $T\in\mathcal{B}(\mathcal{H})$ is called the numerical radius parallel
to another element $S \in\mathcal{B}(\mathcal{H})$, denoted by $T \parallel_{\omega} S$, if
\begin{align*}
\omega(T + \lambda S) = \omega(T)+\omega(S) \;\text{ for some }\;\lambda\in\mathbb{T}.
\end{align*}
\end{definition}

The following result gives a characterization of the numerical radius parallelism for Hilbert space operators. The proof can be found in \cite[Theorem 2.2]{mehamzamani}

\begin{theorem}\label{th.1}(\cite[Theorem 2.2]{mehamzamani})
Let $T,S\in \mathcal{B}(\mathcal{H})$. Then the following conditions are equivalent:
\begin{itemize}
\item[(i)] $T \parallel_{\omega} S$.
\item[(ii)] There exists a sequence of unit vectors $\{x_n\}$ in $\mathcal{H}$ such that
\begin{equation}\label{zamthm}
\lim_{n\rightarrow\infty} \big|\langle Tx_n\mid x_n\rangle\langle Sx_n\mid x_n\rangle\big| = \omega(T)\omega(S).
\end{equation}
\end{itemize}
In addition, if $\{x_n\}$ is a sequence of unit vectors in $\mathcal{H}$ satisfying \eqref{zamthm}, then it also satisfies $\displaystyle{\lim_{n\rightarrow\infty}}|\langle Tx_n\mid x_n\rangle| = \omega(T)$ and $\displaystyle{\lim_{n\rightarrow\infty}}|\langle Sx_n\mid x_n\rangle| = \omega(S)$.
\end{theorem}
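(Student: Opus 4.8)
The plan is to prove the two implications separately and then derive the supplementary limit statements, treating the numerical radius $\omega$ as a norm on $\mathcal{B}(\mathcal{H})$ so that the triangle inequality $\omega(T+\lambda S)\le \omega(T)+\omega(S)$ is available for every $\lambda\in\mathbb{T}$ (recall $\omega(\lambda S)=|\lambda|\,\omega(S)=\omega(S)$). I may assume $\omega(T)\,\omega(S)>0$: if, say, $\omega(T)=0$ then $T=0$, and both (i) and (ii) hold trivially.

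For (ii)$\Rightarrow$(i), I would start from a sequence of unit vectors $(x_n)$ with $|\langle Tx_n\mid x_n\rangle\langle Sx_n\mid x_n\rangle|\to \omega(T)\omega(S)$, and write $\langle Tx_n\mid x_n\rangle=r_n e^{i\theta_n}$, $\langle Sx_n\mid x_n\rangle=s_n e^{i\phi_n}$ with $r_n,s_n\ge 0$. The product condition becomes $r_n s_n\to \omega(T)\omega(S)$, and since $r_n\le\omega(T)$, $s_n\le \omega(S)$, a squeeze argument (dividing by the nonzero limits) forces $r_n\to\omega(T)$ and $s_n\to\omega(S)$; this already yields the two supplementary limits. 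The phase that aligns the two summands in $\langle(T+\lambda S)x_n\mid x_n\rangle$ is $\lambda_n:=e^{i(\theta_n-\phi_n)}$, for which $\langle(T+\lambda_n S)x_n\mid x_n\rangle=(r_n+s_n)e^{i\theta_n}$ has modulus $r_n+s_n\to\omega(T)+\omega(S)$.

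The main obstacle is that this optimal phase depends on $n$, whereas $T\parallel_{\omega} S$ requires a single $\lambda\in\mathbb{T}$. I would resolve this by compactness of $\mathbb{T}$: pass to a subsequence along which $\lambda_{n_k}\to\lambda\in\mathbb{T}$. Writing $\lambda=\lambda_{n_k}+(\lambda-\lambda_{n_k})$, using $\lambda_{n_k}e^{i\phi_{n_k}}=e^{i\theta_{n_k}}$, the bound $s_{n_k}\le\omega(S)$, and the reverse triangle inequality,
$$|\langle(T+\lambda S)x_{n_k}\mid x_{n_k}\rangle|\ge (r_{n_k}+s_{n_k})-|\lambda-\lambda_{n_k}|\,s_{n_k}\longrightarrow \omega(T)+\omega(S),$$
so that $\omega(T+\lambda S)\ge\omega(T)+\omega(S)$. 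The reverse inequality is just the triangle inequality, and together they give (i).

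For (i)$\Rightarrow$(ii) and the supplementary limits, I would fix $\lambda\in\mathbb{T}$ with $\omega(T+\lambda S)=\omega(T)+\omega(S)$ and choose unit vectors $(x_n)$ with $|\langle(T+\lambda S)x_n\mid x_n\rangle|\to\omega(T+\lambda S)$. Then
$$\omega(T)+\omega(S)\ge |\langle Tx_n\mid x_n\rangle|+|\langle Sx_n\mid x_n\rangle|\ge |\langle(T+\lambda S)x_n\mid x_n\rangle|\longrightarrow \omega(T)+\omega(S),$$
so both inner quantities are squeezed to their suprema: $|\langle Tx_n\mid x_n\rangle|\to\omega(T)$ and $|\langle Sx_n\mid x_n\rangle|\to\omega(S)$. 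Multiplying gives \eqref{zamthm}, establishing (ii). Finally, since any sequence realizing \eqref{zamthm} is handled by the squeeze argument of the second paragraph, the supplementary assertion holds regardless of which implication produced the sequence.
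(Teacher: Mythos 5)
Your proof is correct, and it is worth noting that the paper itself offers no proof of this statement: Theorem \ref{th.1} is quoted from \cite[Theorem 2.2]{mehamzamani}, so what you have written is a self-contained substitute, best compared with the paper's proof of its generalization, Theorem \ref{main2}. Your (i)$\Rightarrow$(ii) direction is essentially the squeeze the paper runs there, except that the paper squares $|\langle (T+\lambda S)x_n\mid x_n\rangle_A|$ and isolates the cross term $2\mathrm{Re}(\cdot)$, while you apply the triangle inequality directly to the moduli, which is marginally cleaner and equally rigorous. For (ii)$\Rightarrow$(i) the paper does not argue directly at all: it transfers the problem to the Hilbert space $\mathbf{R}(A^{1/2})$ via Lemma \ref{lempar02} and invokes Theorem \ref{th.1} itself, a route obviously unavailable when proving Theorem \ref{th.1}. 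You replace it with the per-$n$ aligning phase $\lambda_n=e^{i(\theta_n-\phi_n)}$ followed by compactness of $\mathbb{T}$ and a perturbation estimate $|\lambda-\lambda_{n_k}|\,s_{n_k}\to 0$; this is precisely the technique the paper deploys in the seminorm analogue, Theorem \ref{main1}, where the limiting phase is extracted from $\langle Tx_n\mid Sx_n\rangle_A/|\langle Tx_n\mid Sx_n\rangle_A|$, so your argument is structurally aligned with the paper's toolkit even where the paper chose citation over proof. The compactness step is the genuine content of the hard direction (a single $\lambda$ must serve, while the optimal phase varies with $n$), and you handle it correctly.

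Two small points of hygiene. First, the phases $\theta_n,\phi_n$ are undefined when $r_n=0$ or $s_n=0$; since $r_ns_n\to\omega(T)\omega(S)>0$ under your standing assumption, this can occur for at most finitely many $n$, but you should say so (or set the phase to $1$ by convention). Second, your closing sentence asserts the supplementary limits for \emph{any} sequence realizing the product condition, but the squeeze of your second paragraph divides by $\omega(T)$ and $\omega(S)$ and thus genuinely requires $\omega(T)\omega(S)>0$: if, say, $T=0$, every unit sequence satisfies the hypothesis while $|\langle Sx_n\mid x_n\rangle|$ need not tend to $\omega(S)$, so the supplement holds only in the nondegenerate case. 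This implicit convention is shared by the source \cite{mehamzamani} and by the paper's own remark following Theorem \ref{main2}, so it is a defect of the statement rather than of your argument, but it deserves one explicit line.
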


 For the sequel, it is useful to point out the following facts. In all that follows, by an operator we mean a bounded linear operator. The range of every operator $T$ is denoted by $\mathcal{R}(T)$, its null space by $\mathcal{N}(T)$ and its adjoint by $T^*$. Let  $\mathcal{B}(\mathcal{H})^+$ be the cone of positive (semi-definite) operators, i.e.,
$\mathcal{B}(\mathcal{H})^+=\left\{A\in \mathcal{B}(\mathcal{H})\,;\,\langle Ax\mid x\rangle\geq 0,\;\forall\;x\in \mathcal{H}\;\right\}$. Any $A\in \mathcal{B}(\mathcal{H})^+$ defines a positive semi-definite sesquilinear form as follows:
$$\langle\cdot\mid\cdot\rangle_{A}:\mathcal{H}\times \mathcal{H}\longrightarrow\mathbb{C},\;(x,y)\longmapsto\langle x\mid y\rangle_{A} :=\langle Ax\mid y\rangle.$$
Notice that the induced semi-norm is given by $\|x\|_A=\langle x\mid x\rangle_A^{1/2},\;\forall\,x\in \mathcal{H}$. This makes $\mathcal{H}$ into a semi-Hilbertian space. One can check that $\|\cdot\|_A$ is a norm on $\mathcal{H}$ if and only if $A$ is injective, and that $(\mathcal{H},\|\cdot\|_A)$ is complete if and only if $\mathcal{R}(A)$ is closed. Further, $\langle\cdot\mid\cdot\rangle_{A}$ induces a semi-norm on a certain subspace of $\mathcal{B}(\mathcal{H})$ as follows. Given $T\in\mathcal{B}(\mathcal{H})$, if there exists $c>0$ satisfying $\|Tx \|_{A} \leq c \|x \|_{A}$, for all $x\in
\overline{\mathcal{R}(A)}$ it holds:
\begin{equation*}\label{semii}
\|T\|_A:=\sup_{\substack{x\in \overline{\mathcal{R}(A)},\\ x\not=0}}\frac{\|Tx\|_A}{\|x\|_A}=\displaystyle\sup_{\substack{x\in \overline{\mathcal{R}(A)},\\ \|x\|_A= 1}}\|Tx\|_{A}<\infty.
\end{equation*}
We denote $\mathcal{B}^{A}(\mathcal{H}):=\left\{T\in \mathcal{B}(\mathcal{H})\,;\,\|T\|_{A}< \infty\right\}$. It can be seen that $\mathcal{B}^{A}(\mathcal{H})$ is not a subalgebra of $\mathcal{B}(\mathcal{H})$.

From now on, we suppose that $A\neq0$ and we denote $P_A$ the orthogonal projection onto $\overline {\mathcal{R}(A)}$. Henceforth, $A$ is implicitly understood as a positive operator.

\begin{definition}(\cite{acg1})
For  $T \in \mathcal{B}(\mathcal{H})$, an operator $S\in\mathcal{B}(\mathcal{H})$ is called an $A$-adjoint of $T$ if for every $x,y\in \mathcal{H}$, we have
$\langle Tx\mid y\rangle_A=\langle x\mid Sy\rangle_A,$ i.e., $AS=T^*A.$
\end{definition}
The existence of an $A$-adjoint operator is not guaranteed. The set of all operators which admit $A$-adjoints is denoted by $\mathcal{B}_{A}(\mathcal{H})$. By Douglas' theorem \cite{doug}, one can verify that
$$\mathcal{B}_{A}(\mathcal{H})=\left\{T\in \mathcal{B}(\mathcal{H})\,;\;\mathcal{R}(T^{*}A)\subset \mathcal{R}(A)\right\},$$
and
$$\mathcal{B}_{A^{1/2}}(\mathcal{H})=\left\{T \in \mathcal{B}(\mathcal{H})\,;\;\exists \,c > 0;\;\|Tx\|_{A} \leq c \|x\|_{A},\;\forall\,x\in \mathcal{H}  \right\}.$$
If $T\in \mathcal{B}_{A^{1/2}}(\mathcal{H})$, we will say that $T$ is $A$-bounded. An important observation is that if $T$ is $A$-bounded, then $T(\mathcal{N}(A))\subset \mathcal{N}(A)$ and
\begin{equation}\label{ineqwich02}
\|Tx\|_{A} \leq\|T\|_{A} \|x\|_{A},\;\forall x\in \mathcal{H}.
\end{equation}
 Note that $\mathcal{B}_{A}(\mathcal{H})$ and $\mathcal{B}_{A^{1/2}}(\mathcal{H})$ are two subalgebras of
$\mathcal{B}(\mathcal{H})$ which are neither closed nor dense in $\mathcal{B}(\mathcal{H})$. Moreover, the following inclusions $\mathcal{B}_{A}(\mathcal{H})\subset\mathcal{B}_{A^{1/2}}(\mathcal{H})\subset\mathcal{B}^{A}(\mathcal{H})\subset \mathcal{B}(\mathcal{H})$ hold
with equality if $A$ is injective and has a closed range. In addition, for $T\in\mathcal{B}_{A^{1/2}}(\mathcal{H})$ we have:
\begin{equation}\label{seminormold}
\|T\|_A=\sup\left\{\|Tx\|_{A};\;x\in \mathcal{H},\,\|x\|_{A}= 1\right\}.
\end{equation}
For an account of results, we refer to \cite{bakfeki01,bakfeki04,bar,acg2,hsds} and the references therein.

Recently, the $A$-numerical radius of an operator $T\in\mathcal{B}(\mathcal{H})$ is defined by
$$\omega_A(T) = \sup\left\{|\langle Tx\mid x\rangle_A|\,;\;x\in \mathcal{H},\;\|x\|_A= 1\right\}.$$
This new concept is intensively studied (see \cite{bakfeki01,zamani2019}). Note that if $T \in \mathcal{B}(\mathcal{H})$ and satisfies $T(\mathcal{N}(A))\nsubseteq\mathcal{N}(A)$, then $\omega_A(T)=+\infty$ (see \cite[Theorem 2.2.]{kais01}). Moreover, it is easy to see that $\omega_A$ defines a seminorm on $\mathcal{B}_{A^{1/2}}(\mathcal{H})$. For more information about this concept the reader is invited to consult \cite{kais01,bakfeki01,zamani2019}  and the references therein. One main objective of this paper is to introduce a new type of parallelism for $A$-bounded operators based on the $A$-numerical radius and to extend Theorem \ref{th.1}.

If $T\in \mathcal{B}_A(\mathcal{H})$, the reduced solution of the equation $AX=T^*A$ is a distinguished $A$-adjoint operator of $T$, which is denoted by $T^\sharp$. Note that, $T^\sharp=A^\dag T^*A$ in which $A^\dag$ is the Moore-Penrose inverse of $A$. For more results concerning $T^\sharp$ see \cite{acg1,acg2}. From now on, to simplify notation, we will write $X^\sharp$ instead of $X^{\sharp_A}$ for every $X\in \mathcal{B}_A(\mathcal{H})$.

An operator $U\in \mathcal{B}_A(\mathcal{H})$ is called $A$-unitary if $\|Ux\|_A=\|U^\sharp x\|_A=\|x\|_A$ for all $x\in \mathcal{H}$. Further, we have $U^\sharp U=(U^\sharp)^\sharp U^\sharp=P_A$. Notice also that if $U$ is $A$-unitary, then $U^\sharp$ is $A$-unitary and $\|U\|_A=\|U^\sharp \|_A=1$. For more details about this class of operators we refer to \cite{acg1}. In recent years, several results covering some classes of operators on a complex Hilbert space $(\mathcal{H},\langle\cdot\mid\cdot\rangle)$ are extended to $(\mathcal{H},\langle\cdot\mid\cdot\rangle_A)$. One may see \cite{bakfeki01,bakfeki04,zamani2019} and their references.

The remainder of the paper is organized as follows. Section \ref{s2} is meant to introduce the concept of $A$-Davis-Wielandt shell of an operator $T$ acting on a complex Hilbert space $\mathcal{H}$ and to present some of its basic properties. In particular, unlike the classical Davis-Wielandt shell of operators, we will show that this new concept is in general unbounded. Mainly, by using some useful results related to the Hilbert space $\mathbf{R}(A^{1/2})$ we will prove that the $A$-Davis-Wielandt shell of $A$-bounded operators is bounded. Moreover, the convexity and the compactness of this new concept is studied. In addition, we introduce and investigate the notion of $A$-Davis-Wielandt radius of an operator $T$ denoted $d\omega_A(T)$. Mainly, we will show that an operator $T$ is $A$-normaloid if and only if $d\omega_A(T)=\|T\|_A\sqrt{1+\|T\|_A^2}$, where where $\omega_A(T)$ and $\|T\|_A$ denote respectively the $A$-numerical radius and the $A$-operator seminorm of $T$. In section \ref{s3}, we will introduce new concepts of parallelism in the framework of semi-Hilbertian spaces. More precisely, the parallelism of $A$-bounded operators with respect to $\|T\|_A$ and $\omega_A(T)$ are investigated. In particular, we will study the connection between the parallelism of $A$-bounded operators with respect to $\|T\|_A$ and the following equality $d\omega_A(T)=\|T\|_A\sqrt{1+\|T\|_A^2}$. Mainly, some characterizations of $A$-seminorm-parallelism to the identity operator are given.

 \section{The $A$-Davis-Wielandt shell of operators}\label{s2}
Motivated by theoretical study and applications of different generalizations of the numerical range. We introduce the concept of the $A$-Davis-Wielandt shell of an operator $T \in {\mathcal B}({\mathcal H})$ as follows.
\begin{definition}
Let $T\in\mathcal{B}(\mathcal{H})$. The $A$-Davis-Wielandt shell is defined as
$$DW_A(T):=\{(\langle T x\mid x\rangle_A, \langle T x\mid Tx\rangle_A)\,;\;x \in \mathcal{H}\;\;\text{with}\;\|x\|_A=1\}.$$
\end{definition}

\begin{remark}
If $T\in\mathcal{B}_A(\mathcal{H})$, then the $A$-Davis-Wielandt shell is given by
$$DW_A(T):=\{(\langle T x\mid x\rangle_A, \langle T^\sharp T x\mid x\rangle_A)\,;\;x \in \mathcal{H}\;\;\text{with}\;\|x\|_A=1\}.$$
Hence $DW_A(T)$ can be seen as the $A$-joint numerical range of $(T,T^\sharp T)$. For more details about the $A$-joint numerical range of a $d$-tuple of operators $\mathbf{T}=(T_1,\cdots, T_d)\in \mathcal{B}(\mathcal{H})^d$ the reader is invited to consult \cite{bakfeki01}.
\end{remark}

In the following proposition, we sum up some basic properties of the $A$-Davis-Wielandt shell of operators.
\begin{proposition}\label{prop2.1}
Let $T \in \mathcal{B}(\mathcal{H})$. Then, the following properties hold:
\begin{itemize}
\item [(1)] If $AT=TA$ and $\mathcal{R}(A)=\mathcal{H}$, than $DW_A(T)=DW(T)$.
\item [(2)] $DW_A(\alpha T+\beta I)=\left\{(\alpha \lambda+\beta,|\alpha|^2\mu+2Re(\alpha\overline{\beta}\lambda) +|\beta|^2)\,;\;(\lambda,\mu)\in DW_A(T)\right\}$, for every $\alpha,\beta \in \mathbb{C}$.
\item [(3)] If $T\in \mathcal{B}_A(\mathcal{H})$, then $ DW_A(T^\sharp)=\{(\overline{\lambda},\mu)\;;\;(\lambda,\mu)\in W_A(T)\times W_A(TT^\sharp)\,\}$.
\item [(4)] If $\mathcal{N}(A)$ is an invariant subspace for $T$, then $DW_A(UTU^\sharp)=DW_A(T)$ for any $A$-unitary operator $U$.
\end{itemize}
\end{proposition}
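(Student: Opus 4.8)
The plan is to treat the four items in order; the first three are direct verifications (a change of variables, a sesquilinear expansion, and an application of the defining relation of the $A$-adjoint), while item (4) carries the only genuine subtlety and is where I expect the real work. For (1), note first that $\mathcal{R}(A)=\mathcal{H}$ together with $A\ge 0$ forces $A$ to be injective with closed range, hence invertible; consequently $A^{1/2}$ is invertible and, from $AT=TA$, commutes with $T$. I would then substitute $y=A^{1/2}x$, which is a bijection of $\mathcal{H}$ carrying the unit $A$-sphere onto the unit sphere since $\|x\|_A=\|A^{1/2}x\|=\|y\|$. Using the commutation, $\langle Tx\mid x\rangle_A=\langle A^{1/2}Tx\mid A^{1/2}x\rangle=\langle Ty\mid y\rangle$ and $\langle Tx\mid Tx\rangle_A=\langle A^{1/2}Tx\mid A^{1/2}Tx\rangle=\langle Ty\mid Ty\rangle$, so the two shells coincide coordinatewise and the bijectivity upgrades this to set equality.

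For (2), I would simply expand: for a unit $A$-norm $x$, set $\lambda=\langle Tx\mid x\rangle_A$ and $\mu=\langle Tx\mid Tx\rangle_A$. The first coordinate is $\alpha\lambda+\beta\langle x\mid x\rangle_A=\alpha\lambda+\beta$, and expanding $\langle \alpha Tx+\beta x\mid \alpha Tx+\beta x\rangle_A$ by sesquilinearity, with $\langle x\mid Tx\rangle_A=\overline{\lambda}$, gives $|\alpha|^2\mu+\alpha\overline{\beta}\lambda+\overline{\alpha}\beta\overline{\lambda}+|\beta|^2$, the two middle terms combining into $2\,\mathrm{Re}(\alpha\overline{\beta}\lambda)$. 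For (3), the engine is the defining identity $\langle Tx\mid y\rangle_A=\langle x\mid T^\sharp y\rangle_A$. The first coordinate becomes $\langle T^\sharp x\mid x\rangle_A=\overline{\langle x\mid T^\sharp x\rangle_A}=\overline{\langle Tx\mid x\rangle_A}=\overline{\lambda}$ with $\lambda\in W_A(T)$, while the second is $\langle T^\sharp x\mid T^\sharp x\rangle_A=\langle TT^\sharp x\mid x\rangle_A=\mu\in W_A(TT^\sharp)$; the two coordinates are of course coupled through the common $x$, the product notation only recording the range of each entry.

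Item (4) is the main obstacle. Writing $y=U^\sharp x$, the $A$-unitarity of $U$ gives $\|y\|_A=\|x\|_A=1$, and combining the $A$-adjoint relation for $U$ with $\|U\cdot\|_A=\|\cdot\|_A$ yields $\langle UTU^\sharp x\mid x\rangle_A=\langle Ty\mid y\rangle_A$ and $\langle UTU^\sharp x\mid UTU^\sharp x\rangle_A=\|Ty\|_A^2$; this establishes $DW_A(UTU^\sharp)\subseteq DW_A(T)$ with no hypothesis on $T$. The reverse inclusion is delicate because $U^\sharp U=P_A\ne I$ in general, so the substitution is not invertible. Given a unit $A$-norm $y$ I would take $x=Uy$, whence $U^\sharp x=P_A y$, and I must check that the $DW_A(T)$-value at $y$ equals that at $P_A y$. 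This is precisely where the hypothesis $T(\mathcal{N}(A))\subseteq\mathcal{N}(A)$ is used: decomposing $y=P_Ay+(I-P_A)y$ with $(I-P_A)y\in\mathcal{N}(A)$, invariance forces $T(I-P_A)y\in\mathcal{N}(A)$, and since $\langle\cdot\mid\cdot\rangle_A$ annihilates $\mathcal{N}(A)$ in either slot, all cross terms drop out, giving $\langle Ty\mid y\rangle_A=\langle TP_Ay\mid P_Ay\rangle_A$ and $\|Ty\|_A=\|TP_Ay\|_A$. Verifying also $\|P_Ay\|_A=\|y\|_A=1$ via $AP_A=A$ then closes the reverse inclusion. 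I would take care to isolate and state the two supporting facts, $AP_A=A$ and the vanishing of the $A$-form on $\mathcal{N}(A)$, since the entire argument for (4) rests on them.
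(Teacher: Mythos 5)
Your proof is correct and follows essentially the same route as the paper's: the $A^{1/2}$-commutation plus surjectivity of $A^{1/2}$ in (1), direct sesquilinear expansion in (2), the defining $A$-adjoint identity in (3), and in (4) the decomposition of an $A$-unit vector along $\mathcal{N}(A)\oplus\overline{\mathcal{R}(A)}$ combined with $U^\sharp U=P_A$ and the $A$-isometry of $U$ and $U^\sharp$ --- with the paper detailing the inclusion $DW_A(T)\subseteq DW_A(UTU^\sharp)$ and leaving the hypothesis-free reverse inclusion to the reader, exactly the split you make. Your caveat in (3) that the two coordinates stay coupled through the common vector $x$ is well taken, since the set-builder notation read literally as a Cartesian product would fail already for $A=I$, and the paper's proof, like yours, really establishes only the coupled version.
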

\begin{proof}
\noindent (1)\;Observe that if $AT=TA$ then $A^{1/2}T=TA^{1/2}$. So, we infer that
\begin{align*}
DW_A(T)
&=\left\{(\langle A^{1/2}T x\mid A^{1/2}x\rangle,\|A^{1/2}T x\|^2)\,;\;x \in \mathcal{H},\;\|A^{1/2}x\|=1\right\}\\
&=\left\{(\langle T A^{1/2}x\mid A^{1/2}x\rangle,\|TA^{1/2}x\|^2)\,;\;x \in \mathcal{H},\;\|A^{1/2}x\|=1\right\}\\
 &\subseteq DW(T).
\end{align*}
Further, the fact $\mathcal{R}(A)=\mathcal{H}$ implies that $A^{1/2}$ is a surjective operator. Hence, the reverse inclusion follows immediately. Thus, $DW_A(T)=DW(T)$ as required.
\par \vskip 0.1 cm \noindent (2)\;Can be verified readily.
\par \vskip 0.1 cm \noindent (3)\;If $(\lambda,\mu)\in DW_A(T^\sharp)$, then there exists $x\in {\mathcal H}$ with $\|x\|_A=1$ such that
$$\lambda=\langle T^\sharp x\mid x\rangle_A\;\;\text{and}\;\;\mu=\langle T^\sharp x\mid T^\sharp x\rangle_A.$$
This yields that $\lambda=\overline{\langle T x\mid x\rangle_A}$ or equivalently  $\overline{\lambda}=\langle T x\mid x\rangle_A \in W_A(T)$. On the other hand,
$\mu=\langle T T^\sharp x\mid  x\rangle_A \in  W_A(TT^\sharp)$. Consequently,
$$ DW_A(T^\sharp)\subset \{(\overline{\lambda},\;\mu)\,;\;(\lambda,\mu)\in W_A(T)\times W_A(TT^\sharp)\;\}.$$
By a similar way we prove the reverse inclusion.
 \par \vskip 0.1 cm \noindent (4)\;Let  $(\lambda,\mu)\in DW_A(T)$, then there exists $x\in {\mathcal H}$ with $\|x\|_A=1$ such that
$$\lambda = \langle T x\mid x\rangle_A\;\;\;\text{and}\;\;\mu=\langle T x\mid Tx\rangle_A=\langle UT x\mid UTx\rangle_A.$$
 By writing $x=P_Ax+y$ with $y\in \mathcal{N}(A)$ and by using the fact that $\mathcal{N}(A)$ is an invariant subspace for $T$, we see that
 \begin{align*}
\lambda
& = \langle AT(P_Ax+y)\mid P_Ax+y\rangle\\
 &=\langle AT(P_Ax+y),P_Ax\rangle\\
 &= \langle AT(P_Ax)\mid P_Ax\rangle \\
  &=\langle TP_Ax\mid P_Ax\rangle_A.
\end{align*}
Similarly, we verify that
$$\mu=\langle UTP_Ax\mid UTP_Ax\rangle_A.$$
Moreover, since $P_A=U^\sharp U$, it follows that
$$\lambda=\langle TU^\sharp Ux\mid U^\sharp Ux\rangle_A=\langle U TU^\sharp Ux\mid Ux\rangle_A,$$
and
$$\mu=\langle UTU^\sharp Ux\mid UTU^\sharp Ux\rangle_A.$$
Hence, since $\|U x\|_A=\| x\|_A=1$, then we immediately deduce that $(\lambda,\mu) \in DW_A(UTU^\sharp)$ and so $DW_A(T)\subseteq DW_A(UTU^\sharp)$. Finally, the reverse inclusion can be checked by using the fact that $\|U^\sharp y\|_A=\|y\|_A$ for all $y\in \mathcal{H}$.
\end{proof}

It should be mentioned that $DW(T)$ is bounded for every $T\in \mathcal{B}(\mathcal{H})$ (see \cite{lipoonsze}). However, for an arbitrary operator $T$, the $A$-Davis-Wielandt shell of $T$ is in general unbounded even if $\mathcal{H}$ is finite dimensional as it is shown in the following example.

\begin{example}\label{exampleunbounded}
Let $A = \begin{pmatrix}1&0&0\\0&0&0\\0&0&0\end{pmatrix}$ and $T= \begin{pmatrix}0&0&1\\0&1&0\\1&0&0\end{pmatrix}$ be operators on $\mathbb{C}^3$. By a direct computation we get
$$
DW_A(T)=\left\{\left(x_2\overline{x_1},1\right);\,(x_1,x_2) \in \mathbb{C}^2\right\}=\mathbb{C}\times \{1\},
$$
which is clearly unbounded.
\end{example}
Unlike the case $T\in \mathcal{B}(\mathcal{H})$, we have $DW_A(T)$ is bounded if $T\in \mathcal{B}_{A^{1/2}}(\mathcal{H})$ as it will be shown in the next result. Before that, we need to recall from \cite{acg3} the following facts. The semi-inner product $\langle\cdot\mid\cdot\rangle_A$ induces an inner product on the quotient space $\mathcal{H}/\mathcal{N}(A)$ defined as
$$[\overline{x},\overline{y}]=\langle Ax\mid y\rangle,$$
for all $\overline{x},\overline{y}\in \mathcal{H}/\mathcal{N}(A)$. Notice that $(\mathcal{H}/\mathcal{N}(A),[\cdot,\cdot])$ is not complete unless $\mathcal{R}(A)$ is not closed. However, a canonical construction due to L. de Branges and J. Rovnyak in \cite{branrov} shows that the completion of $\mathcal{H}/\mathcal{N}(A)$  under the inner product $[\cdot,\cdot]$ is isometrically isomorphic to the Hilbert space $\mathcal{R}(A^{1/2})$
with the inner product
$$(A^{1/2}x,A^{1/2}y):=\langle P_Ax\mid P_Ay\rangle,\;\forall\, x,y \in \mathcal{H}.$$

In the sequel, the Hilbert space $\left(\mathcal{R}(A^{1/2}), (\cdot,\cdot)\right)$ will be denoted by $\mathbf{R}(A^{1/2})$ and we use the symbol
$\|\cdot\|_{\mathbf{R}(A^{1/2})}$ to represent the norm induced by the inner product $(\cdot,\cdot)$. It is worth noting that $\mathcal{R}(A)$ is dense
in $\mathbf{R}(A^{1/2})$ (see \cite{kais01}). Moreover, since $\mathcal{R}(A)\subset \mathcal{R}(A^{1/2})$, one observes that
\begin{align}\label{usefuleq001}
(Ax,Ay)
&=(A^{1/2}A^{1/2}x,A^{1/2}A^{1/2}y) = \langle P_AA^{1/2}x\mid P_AA^{1/2}y\rangle\nonumber\\
 &=\langle A^{1/2}x\mid A^{1/2}y\rangle=\langle x \mid y\rangle_A.
\end{align}
This gives the following important relation:
\begin{equation}\label{usefuleq01}
\|Ax\|_{\mathbf{R}(A^{1/2})}=\|x\|_A,\;\forall\,x\in \mathcal{H}.
\end{equation}
For more information related to the Hilbert space $\mathbf{R}(A^{1/2})$, the interested reader is referred to \cite{acg3} and the references therein.

As in \cite{acg3}, we consider the operator $Z_A: \mathcal{H}\to \mathbf{R}(A^{1/2})$ defined by
\begin{equation}
Z_Ax=Ax,\;\forall\,x\in \mathcal{H}.
\end{equation}
Now, we adapt from \cite{acg3} the following proposition in our context.
\begin{proposition}\label{prop_arias}
Let $T\in \mathcal{B}(\mathcal{H})$. Then $T\in \mathcal{B}_{A^{1/2}}(\mathcal{H})$ if and only if there exists a unique $\widetilde{T}\in \mathcal{B}(\mathbf{R}(A^{1/2}))$ such that $Z_AT =\widetilde{T}Z_A$.
\end{proposition}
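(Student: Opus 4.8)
The statement to prove is: for $T\in\mathcal B(\mathcal H)$, we have $T\in\mathcal B_{A^{1/2}}(\mathcal H)$ if and only if there is a unique $\widetilde T\in\mathcal B(\mathbf R(A^{1/2}))$ with $Z_A T=\widetilde T Z_A$. Let me think about how the two directions work and where the real content lies.

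For the forward direction, suppose $T$ is $A$-bounded. The natural candidate for $\widetilde T$ must send $Ax=Z_Ax$ to $Z_A(Tx)=ATx$; that is, I want to define $\widetilde T(Ax):=ATx$ on the range $\mathcal R(A)$. The key identity $\|Ax\|_{\mathbf R(A^{1/2})}=\|x\|_A$ from equation (2.4) immediately gives both well-definedness and boundedness in one stroke: if $Ax=Ay$ then $x-y\in\mathcal N(A)$, and since $A$-boundedness forces $T(\mathcal N(A))\subset\mathcal N(A)$ (stated just after (1.4)), we get $A(Tx-Ty)=AT(x-y)=0$, so $ATx=ATy$ and $\widetilde T$ is well-defined. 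For boundedness, $\|\widetilde T(Ax)\|_{\mathbf R(A^{1/2})}=\|ATx\|_{\mathbf R(A^{1/2})}=\|Tx\|_A\le\|T\|_A\|x\|_A=\|T\|_A\|Ax\|_{\mathbf R(A^{1/2})}$, using (2.4) twice and the inequality (1.4). Thus $\widetilde T$ is a bounded operator on the dense subspace $\mathcal R(A)$ of $\mathbf R(A^{1/2})$, and it extends uniquely to all of $\mathbf R(A^{1/2})$ by continuity.

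For the converse, suppose such a $\widetilde T$ exists, so $Z_AT=\widetilde T Z_A$, i.e.\ $ATx=\widetilde T(Ax)$ for all $x$. Then for every $x\in\mathcal H$,
\[
\|Tx\|_A=\|ATx\|_{\mathbf R(A^{1/2})}=\|\widetilde T(Ax)\|_{\mathbf R(A^{1/2})}\le\|\widetilde T\|\,\|Ax\|_{\mathbf R(A^{1/2})}=\|\widetilde T\|\,\|x\|_A,
\]
again invoking (2.4) on both ends. This is exactly the inequality $\|Tx\|_A\le c\|x\|_A$ characterizing membership in $\mathcal B_{A^{1/2}}(\mathcal H)$ with $c=\|\widetilde T\|$, so $T$ is $A$-bounded.

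The uniqueness assertion is handled by density: $\mathcal R(A)$ is dense in $\mathbf R(A^{1/2})$, and any bounded operator intertwining via $Z_A$ is forced to equal $ATx$ on $Z_Ax=Ax$, hence is determined on $\mathcal R(A)$ and therefore everywhere by continuity. I expect no genuine obstacle here; the only subtle point — and the step I would state most carefully — is the passage from a densely-defined bounded map to its closure, and the repeated clean use of the isometry (2.4) that converts the $\mathbf R(A^{1/2})$-norm into the $A$-seminorm. Everything else is bookkeeping, and the whole argument is essentially the verification that the construction of \cite{acg3} transfers verbatim once one notes $T(\mathcal N(A))\subset\mathcal N(A)$.
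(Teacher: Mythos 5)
Your proof is correct, and it coincides with the argument the paper implicitly relies on: the paper states this proposition without proof, citing \cite{acg3}, and the standard proof there is exactly your construction --- define $\widetilde{T}(Ax):=ATx$ on $\mathcal{R}(A)$, check well-definedness via $T(\mathcal{N}(A))\subset\mathcal{N}(A)$, obtain boundedness and the converse implication from the isometry $\|Ax\|_{\mathbf{R}(A^{1/2})}=\|x\|_A$, and extend uniquely by density of $\mathcal{R}(A)$ in the Hilbert space $\mathbf{R}(A^{1/2})$. No gaps; the one subtlety you flag (passing from the densely defined bounded map to its continuous extension) is handled correctly since $\mathbf{R}(A^{1/2})$ is complete.
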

Before we move on, it is important state the following lemma. Its proof can be found in the proof \cite[Proposition 3.9.]{kais01}.
\begin{lemma}\label{imporeq2009}
If $T\in \mathcal{B}_{A^{1/2}}(\mathcal{H})$, then we have
\begin{itemize}
  \item [(1)] $\|T\|_A=\|\widetilde{T}\|_{\mathcal{B}(\mathbf{R}(A^{1/2}))}$.
  \item [(2)] $\omega_A(T)=\omega(\widetilde{T})$.
\end{itemize}
\end{lemma}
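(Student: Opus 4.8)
The plan is to transport both quantities from $\mathcal{H}$ to the Hilbert space $\mathbf{R}(A^{1/2})$ through the intertwining relation $Z_AT=\widetilde{T}Z_A$ supplied by \Cref{prop_arias}, and then to exploit the density of $\mathcal{R}(A)$ in $\mathbf{R}(A^{1/2})$. Unravelling the intertwining relation gives, for every $x\in\mathcal{H}$,
$$\widetilde{T}(Ax)=\widetilde{T}Z_Ax=Z_ATx=A(Tx),$$
so $\widetilde{T}$ is completely determined on $\mathcal{R}(A)$ by $\widetilde{T}(Ax)=ATx$. Together with the isometric identity \eqref{usefuleq01}, $\|Ax\|_{\mathbf{R}(A^{1/2})}=\|x\|_A$, and the inner-product identity \eqref{usefuleq001}, $(Ax,Ay)=\langle x\mid y\rangle_A$, this is the dictionary that converts computations in $\mathbf{R}(A^{1/2})$ into $A$-computations on $\mathcal{H}$.

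For (1), I would start from the operator-norm formula on $\mathbf{R}(A^{1/2})$ and use the boundedness of $\widetilde{T}$ together with the density of $\mathcal{R}(A)$ in $\mathbf{R}(A^{1/2})$ to restrict the defining supremum to the dense subspace $\mathcal{R}(A)$, obtaining
$$\|\widetilde{T}\|_{\mathcal{B}(\mathbf{R}(A^{1/2}))}=\sup_{Ax\neq 0}\frac{\|\widetilde{T}(Ax)\|_{\mathbf{R}(A^{1/2})}}{\|Ax\|_{\mathbf{R}(A^{1/2})}},$$
where the supremum runs over $x\in\mathcal{H}$. Substituting $\widetilde{T}(Ax)=ATx$ and applying \eqref{usefuleq01} to numerator and denominator turns the right-hand side into $\sup_{Ax\neq 0}\|Tx\|_A/\|x\|_A$. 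Since $Ax\neq 0$ is equivalent to $\|x\|_A\neq 0$, this last quantity is exactly $\|T\|_A$ in the form \eqref{seminormold}.

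For (2), the argument runs in parallel, with the quadratic form replacing the norm. Normalized range vectors $Ax/\|Ax\|_{\mathbf{R}(A^{1/2})}$ approximate any given unit vector (normalize a sequence $Ax_n\to u$), and $u\mapsto(\widetilde{T}u,u)$ is continuous because $\widetilde{T}$ is bounded; hence the supremum defining $\omega(\widetilde{T})$ may again be taken over unit vectors in $\mathcal{R}(A)$:
$$\omega(\widetilde{T})=\sup_{\|Ax\|_{\mathbf{R}(A^{1/2})}=1}\big|(\widetilde{T}(Ax),Ax)\big|.$$
Using $\widetilde{T}(Ax)=ATx$ and \eqref{usefuleq001} gives $(\widetilde{T}(Ax),Ax)=(ATx,Ax)=\langle Tx\mid x\rangle_A$, while the constraint $\|Ax\|_{\mathbf{R}(A^{1/2})}=1$ reads $\|x\|_A=1$; the supremum is therefore $\omega_A(T)$.

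The only genuinely delicate point is the passage to the dense subset: I must check that restricting the defining suprema of $\|\widetilde{T}\|$ and $\omega(\widetilde{T})$ to $\mathcal{R}(A)$ does not decrease their values. For the norm this follows from the continuity of $\widetilde{T}$ and the density of $\mathcal{R}(A)$ in $\mathbf{R}(A^{1/2})$; for the numerical radius one additionally uses that the normalized images remain unit vectors in $\mathcal{R}(A)$ together with the continuity of $u\mapsto(\widetilde{T}u,u)$. Everything else is a direct substitution through the identities \eqref{usefuleq001} and \eqref{usefuleq01}.
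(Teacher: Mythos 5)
Your proof is correct. The paper does not actually prove this lemma itself but defers to \cite[Proposition 3.9]{kais01}, and the argument there is essentially the one you propose: transporting the defining suprema through the intertwining relation $Z_AT=\widetilde{T}Z_A$ and the identities \eqref{usefuleq001} and \eqref{usefuleq01}, and then using the density of $\mathcal{R}(A)$ in $\mathbf{R}(A^{1/2})$ together with the continuity of $\widetilde{T}$ (respectively of $u\mapsto(\widetilde{T}u,u)$) to check, exactly as you do, that restricting the suprema to $\mathcal{R}(A)$ loses nothing.
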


In order to prove our next result, we need the following lemma.
\begin{lemma}\label{norminraundemi01}
Let $T\in \mathcal{B}_{A^{1/2}}(\mathcal{H})$. Then,
\begin{equation}\label{eq90}
\overline{DW_A(T)}=\overline{DW(\widetilde{T})},
\end{equation}
where $DW(\widetilde{T})$ denotes the classical Davis-Wielandt shell of $\widetilde{T}$ on the Hilbert space $\mathbf{R}(A^{1/2})$.
\end{lemma}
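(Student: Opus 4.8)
The plan is to pull both coordinates of the pairs defining $DW_A(T)$ over to the Hilbert space $\mathbf{R}(A^{1/2})$ by means of the operator $Z_A$ and the associated operator $\widetilde{T}$ from \cref{prop_arias}, and then to compare the result with $DW(\widetilde{T})$. The two key tools are the intertwining identity $Z_AT=\widetilde{T}Z_A$ (that is, $ATx=\widetilde{T}(Ax)$ for all $x\in\mathcal{H}$) together with the identities \eqref{usefuleq001} and \eqref{usefuleq01}, which say that the map $x\mapsto Z_Ax=Ax$ carries the $A$-inner product isometrically onto the inner product $(\cdot,\cdot)$ of $\mathbf{R}(A^{1/2})$.

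First I would fix $x\in\mathcal{H}$ with $\|x\|_A=1$ and set $\xi=Z_Ax=Ax$. By \eqref{usefuleq01} we have $\|\xi\|_{\mathbf{R}(A^{1/2})}=\|x\|_A=1$. Using \eqref{usefuleq001} and the intertwining relation, the first coordinate becomes
\begin{equation*}
\langle Tx\mid x\rangle_A=(ATx,Ax)=(\widetilde{T}Ax,Ax)=(\widetilde{T}\xi,\xi),
\end{equation*}
while the second coordinate becomes
\begin{equation*}
\langle Tx\mid Tx\rangle_A=(ATx,ATx)=(\widetilde{T}\xi,\widetilde{T}\xi).
\end{equation*}
Conversely, every unit vector $\xi\in\mathcal{R}(A)$ has the form $\xi=Ax$ with $\|x\|_A=1$. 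This yields the exact description
\begin{equation*}
DW_A(T)=\big\{\big((\widetilde{T}\xi,\xi),(\widetilde{T}\xi,\widetilde{T}\xi)\big)\,;\;\xi\in\mathcal{R}(A),\;\|\xi\|_{\mathbf{R}(A^{1/2})}=1\big\},
\end{equation*}
which is precisely the subset of $DW(\widetilde{T})$ obtained by letting the unit vector range only over the dense subspace $\mathcal{R}(A)$ rather than over all of $\mathbf{R}(A^{1/2})$. In particular $DW_A(T)\subseteq DW(\widetilde{T})$, and hence $\overline{DW_A(T)}\subseteq\overline{DW(\widetilde{T})}$.

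For the reverse inclusion I would invoke the density of $\mathcal{R}(A)$ in $\mathbf{R}(A^{1/2})$. Given any unit vector $\eta\in\mathbf{R}(A^{1/2})$, choose $\zeta_n\in\mathcal{R}(A)$ with $\zeta_n\to\eta$; since $\eta\neq 0$ we may set $\xi_n=\zeta_n/\|\zeta_n\|_{\mathbf{R}(A^{1/2})}$, so that $\xi_n\in\mathcal{R}(A)$, $\|\xi_n\|_{\mathbf{R}(A^{1/2})}=1$ and $\xi_n\to\eta$. By boundedness of $\widetilde{T}$ and continuity of the inner product, $(\widetilde{T}\xi_n,\xi_n)\to(\widetilde{T}\eta,\eta)$ and $(\widetilde{T}\xi_n,\widetilde{T}\xi_n)\to(\widetilde{T}\eta,\widetilde{T}\eta)$, so the corresponding point of $DW(\widetilde{T})$ belongs to $\overline{DW_A(T)}$. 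Thus $DW(\widetilde{T})\subseteq\overline{DW_A(T)}$, whence $\overline{DW(\widetilde{T})}\subseteq\overline{DW_A(T)}$, and combining the two inclusions gives the claimed equality \eqref{eq90}. The only delicate point is the normalization step in this approximation argument (and the observation that a unit vector cannot be approximated by the zero vector); everything else is a direct transport of the inner-product data through the isometry $Z_A$.
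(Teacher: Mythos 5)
Your proof is correct and takes essentially the same approach as the paper: both transport $DW_A(T)$ into $\mathbf{R}(A^{1/2})$ via the intertwining relation $Z_AT=\widetilde{T}Z_A$ and the isometric identities \eqref{usefuleq001} and \eqref{usefuleq01} to get $DW_A(T)\subseteq DW(\widetilde{T})$, and both obtain the reverse inclusion of closures from the density of $\mathcal{R}(A)$ in $\mathbf{R}(A^{1/2})$ together with a normalization of the approximating vectors. The only cosmetic difference is that you normalize directly in $\mathbf{R}(A^{1/2})$ (setting $\xi_n=\zeta_n/\|\zeta_n\|_{\mathbf{R}(A^{1/2})}$), whereas the paper normalizes the preimages $x_n$ in $\mathcal{H}$.
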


\begin{proof}
Since $\mathcal{H}=\mathcal{N}(A^{1/2})\oplus \overline{\mathcal{R}(A^{1/2})}$, then we see that
\begin{align}\label{newidea}
DW_A(T)
& =\left\{\left(\langle Tx\mid x\rangle_A,\|Tx\|_A^2\right)\,;\;x\in \overline{\mathcal{R}(A^{1/2})},\;\|x\|_A= 1\right\} \nonumber\\
 &=\left\{ \left[(ATx, Ax),\|ATx\|_{\mathbf{R}(A^{1/2})}^2\right]\,;\;x\in \overline{\mathcal{R}(A^{1/2})},\;\|Ax\|_{\mathbf{R}(A^{1/2})}= 1\right\}\nonumber\\
  &=\left\{\left[( \widetilde{T}Ax, Ax),\|\widetilde{T}Ax\|_{\mathbf{R}(A^{1/2})}^2\right]\,;\;x\in \overline{\mathcal{R}(A^{1/2})},\;\|Ax\|_{\mathbf{R}(A^{1/2})}= 1\right\}\\
  &\subseteq DW(\widetilde{T}).\nonumber
\end{align}
This implies that, $\overline{DW_A(T)}\subseteq\overline{DW(\widetilde{T})}$. On the other hand, one has
\begin{align*}
DW(\widetilde{T})
& =\left\{\left[(\widetilde{T}y, y),\|\widetilde{T}y\|_{\mathbf{R}(A^{1/2})}^2\right]\,;\;y\in \mathcal{R}(A^{1/2}),\;\|y\|_{\mathbf{R}(A^{1/2})}= 1\right\}\\
& =\left\{\left[( \widetilde{T}A^{1/2}x, A^{1/2}x),\|\widetilde{T}A^{1/2}x\|_{\mathbf{R}(A^{1/2})}^2\right]\,;\;x\in \mathcal{H},\;\|A^{1/2}x\|_{\mathbf{R}(A^{1/2})}= 1\right\}\\
& =\left\{\left[( \widetilde{T}A^{1/2}x, A^{1/2}x),\|\widetilde{T}A^{1/2}x\|_{\mathbf{R}(A^{1/2})}^2\right]\,;\;x\in \overline{\mathcal{R}(A^{1/2})},\;\|A^{1/2}x\|_{\mathbf{R}(A^{1/2})}= 1\right\}.
\end{align*}
Since $\mathcal{R}(A)$ is dense in $\mathbf{R}(A^{1/2})$, then for all $x\in \mathcal{H}$, there exists a sequence $(x_n)_n\subset \mathcal{H}$ (which can chosen to be in $\overline{\mathcal{R}(A^{1/2})}$ because of the decomposition $\mathcal{H}=\mathcal{N}(A^{1/2})\oplus \overline{\mathcal{R}(A^{1/2})}$) such that $$\displaystyle\lim_{n\to +\infty}\|Ax_n-A^{1/2}x\|_{\mathbf{R}(A^{1/2})}=0.$$
So, if $(\lambda,\mu)\in DW(\widetilde{T})$ then there exists $(x_n)_n\subseteq\overline{\mathcal{R}(A^{1/2})}$ such that
$$\displaystyle\lim_{n\to +\infty}\|Ax_n\|_{\mathbf{R}(A^{1/2})}= 1,\;\displaystyle\lim_{n\to +\infty}|(\widetilde{T}Ax_n, Ax_n)|=\lambda\text{ and } \displaystyle\lim_{n\to +\infty}\|\widetilde{T}Ax_n\|_{\mathbf{R}(A^{1/2})}^2=\mu.$$
Let $y_n:=\frac{x_n}{\|Ax_n\|_{\mathbf{R}(A^{1/2})}}$, then we see that $\|Ay_n\|_{\mathbf{R}(A^{1/2})}= 1$,
$$\lim_{n\to +\infty}|( \widetilde{T}Ay_n, Ay_n)|=\lambda\;\text{ and }\;\displaystyle\lim_{n\to +\infty}\|\widetilde{T}Ay_n\|_{\mathbf{R}(A^{1/2})}^2=\mu.$$
This implies, by using \eqref{newidea}, that $(\lambda,\mu)\in \overline{DW_A(T)}$ and so $DW(\widetilde{T})\subseteq \overline{DW_A(T)}$. Thus, we infer that $\overline{DW(\widetilde{T})}\subseteq \overline{DW_A(T)}$. Hence, \eqref{eq90} is proved and therefore the proof is complete.
\end{proof}

Now, we are in a position to prove the following theorem.
\begin{theorem}
Let $T\in \mathcal{B}_{A^{1/2}}(\mathcal{H})$. Then, $DW_A(T)$ is bounded. More precisely, we have $DW_A(T)\subseteq \Lambda$, where
$$\Lambda=\{(\mu,r)\in \mathbb{C}\times [0,+\infty[\,;\; |\mu|^2\leq r\leq \|T\|_A^2\,\}.$$
\end{theorem}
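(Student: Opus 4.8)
The plan is to take an arbitrary point of $DW_A(T)$ and verify directly that it satisfies the three conditions defining $\Lambda$. Fix $x\in\mathcal{H}$ with $\|x\|_A=1$ and write $\mu=\langle Tx\mid x\rangle_A$ and $r=\langle Tx\mid Tx\rangle_A=\|Tx\|_A^2$, so that $(\mu,r)$ is the generic element of $DW_A(T)$. The condition $r\geq 0$ is immediate, since $r$ is a squared seminorm, which also places $r$ in $[0,+\infty[$.

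For the inequality $|\mu|^2\leq r$, I would invoke the Cauchy--Schwarz inequality for the positive semi-definite form $\langle\cdot\mid\cdot\rangle_A$: one has $|\mu|=|\langle Tx\mid x\rangle_A|\leq\|Tx\|_A\|x\|_A=\|Tx\|_A$, and squaring yields $|\mu|^2\leq\|Tx\|_A^2=r$. For the upper bound $r\leq\|T\|_A^2$, this is exactly where the hypothesis $T\in\mathcal{B}_{A^{1/2}}(\mathcal{H})$ enters: since $T$ is $A$-bounded, inequality \eqref{ineqwich02} gives $r=\|Tx\|_A^2\leq\|T\|_A^2\|x\|_A^2=\|T\|_A^2$. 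Combining the three estimates shows $(\mu,r)\in\Lambda$, hence $DW_A(T)\subseteq\Lambda$; and since $\Lambda$ is contained in the product of the closed disc $\{|\mu|\leq\|T\|_A\}$ with the interval $[0,\|T\|_A^2]$, it is bounded, so $DW_A(T)$ is bounded as well.

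There is essentially no genuine obstacle here: the argument is a two-line application of Cauchy--Schwarz together with \eqref{ineqwich02}. The only point worth stressing is that the finiteness of $\|T\|_A$, that is, the $A$-boundedness of $T$, is indispensable, and its failure is precisely what produces the unbounded shell of \cref{exampleunbounded}. Alternatively, one could deduce boundedness less explicitly from \cref{norminraundemi01}, since $\overline{DW_A(T)}=\overline{DW(\widetilde{T})}$ and the classical Davis--Wielandt shell $DW(\widetilde{T})$ of the bounded operator $\widetilde{T}$ on $\mathbf{R}(A^{1/2})$ is bounded; but that route gives a weaker conclusion, and I would prefer the direct computation, which produces the explicit containment in $\Lambda$.
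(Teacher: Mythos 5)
Your proof is correct, but it takes a genuinely different route from the paper. You argue directly on $\mathcal{H}$: Cauchy--Schwarz for the positive semi-definite form $\langle\cdot\mid\cdot\rangle_A$ gives $|\mu|^2\leq\|Tx\|_A^2=r$, and \eqref{ineqwich02} gives $r\leq\|T\|_A^2$, which is a complete and self-contained two-line verification. The paper instead reduces everything to the classical setting: it invokes Proposition \ref{prop_arias} to pass to the bounded operator $\widetilde{T}$ on the de Branges--Rovnyak space $\mathbf{R}(A^{1/2})$, cites the classical theorem of Li, Poon and Sze to place $DW(\widetilde{T})$ inside the analogous region with $\|\widetilde{T}\|_{\mathcal{B}(\mathbf{R}(A^{1/2}))}$, identifies the norms via Lemma \ref{imporeq2009}, and then uses Lemma \ref{norminraundemi01} together with the closedness of $\Lambda$ to conclude $DW_A(T)\subseteq\overline{DW_A(T)}\subseteq\Lambda$. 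Note that the paper's route, unlike the weakened version you sketch at the end of your proposal, does recover the explicit containment in $\Lambda$ (not merely boundedness), precisely because it imports the sharp form of the classical theorem and exploits that $\Lambda$ is closed. What each approach buys: yours is more elementary, avoids the closure argument entirely, and makes transparent that only Cauchy--Schwarz and $A$-boundedness are needed; the paper's is consistent with its general strategy of transferring statements to $\mathbf{R}(A^{1/2})$, which pays off in later results (e.g.\ Theorems \ref{equinew} and \ref{equinew2}) where no direct argument is as readily available. Your remark that finiteness of $\|T\|_A$ is the indispensable hypothesis, illustrated by the unbounded shell in the paper's first example, is exactly right.
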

\begin{proof}
Since $T\in \mathcal{B}_{A^{1/2}}(\mathcal{H})$, then by Proposition \ref{prop_arias}, there exists a unique $\widetilde{T}\in \mathcal{B}(\mathbf{R}(A^{1/2}))$ such that $Z_AT =\widetilde{T}Z_A$. Moreover, by \cite[Theorem 2.1.]{lipoonsze}, $DW(\widetilde{T})$ is bounded and we have
$$DW(\widetilde{T})\subseteq \left\{(\mu,r)\in \mathbb{C}\times [0,+\infty[\,;\; |\mu|^2\leq r\leq \|\widetilde{T}\|_{\mathcal{B}(\mathbf{R}(A^{1/2}))}^2\,\right\}.$$
This implies that $DW(\widetilde{T})\subseteq \Lambda$ since in view of Lemma \ref{imporeq2009} we have $\|T\|_A=\|\widetilde{T}\|_{\mathcal{B}(\mathbf{R}(A^{1/2}))}$. On the other hand, by using Lemma \ref{norminraundemi01}, we infer that $\overline{DW_A(T)}\subseteq \Lambda$ since $\Lambda$ is closed. Hence, $DW_A(T)\subseteq \Lambda$ as required.
\end{proof}

Now, we turn to investigate the convexity and the compactness of $DW_A(T)$. Firstly, we recall from \cite{lipoonsze} the following theorem which remains true for operators acting on an inner-product space which is not necessarily complete.
\begin{theorem}\label{convexdavis}
Let $T\in \mathcal{B}(\mathcal{H})$ with $\text{dim}(\mathcal{H})\geq 3$. Then, $DW(T)$ is convex subset of $\mathbb{C}^2$.
\end{theorem}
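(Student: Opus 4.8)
The plan is to realize $DW(T)$ as a joint numerical range of three Hermitian forms and then to invoke the Toeplitz--Hausdorff phenomenon in that setting. Since $\langle Tx\mid Tx\rangle=\|Tx\|^2$ is real and nonnegative, one has, under the identification $\mathbb{C}^2\cong\mathbb{C}\times\mathbb{R}\cong\mathbb{R}^3$,
\begin{equation*}
DW(T)=\big\{\big(\mathrm{Re}\langle Tx\mid x\rangle,\;\mathrm{Im}\langle Tx\mid x\rangle,\;\langle Tx\mid Tx\rangle\big)\,;\;x\in\mathcal{H},\;\|x\|=1\big\}.
\end{equation*}
The three real-valued functions appearing here are the quadratic forms associated with the Hermitian sesquilinear forms $(z,w)\mapsto\tfrac12\big(\langle Tz\mid w\rangle+\overline{\langle Tw\mid z\rangle}\big)$, $(z,w)\mapsto\tfrac{1}{2i}\big(\langle Tz\mid w\rangle-\overline{\langle Tw\mid z\rangle}\big)$ and $(z,w)\mapsto\langle Tz\mid Tw\rangle$; none of these requires the adjoint $T^*$ to exist, so the description makes sense even when $\mathcal{H}$ is incomplete. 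Thus $DW(T)$ is precisely the joint numerical range of a triple of Hermitian forms, and the statement reduces to the convexity of such a range on a space of dimension at least three.

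The first step I would carry out is the standard finite-dimensional reduction, which is also where completeness becomes irrelevant. To prove convexity it suffices to show that whenever $p_1,p_2\in DW(T)$ arise from unit vectors $x_1,x_2\in\mathcal{H}$, the whole segment $[p_1,p_2]$ lies in $DW(T)$. Since $\dim(\mathcal{H})\geq 3$ while $\dim\,\mathrm{span}\{x_1,x_2\}\leq 2$, I would fix a three-dimensional subspace $\mathcal{M}$ containing $x_1$ and $x_2$. For unit vectors $z\in\mathcal{M}$ each of the three coordinates of the display above depends only on the restriction to $\mathcal{M}$ of the corresponding Hermitian sesquilinear form, and upon choosing an orthonormal basis of $\mathcal{M}$ these restrictions become three $3\times 3$ Hermitian matrices. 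Therefore the part of $DW(T)$ produced by unit vectors of $\mathcal{M}$ is exactly the joint numerical range of three $3\times 3$ Hermitian matrices, a subset of $DW(T)$ that contains both $p_1$ and $p_2$.

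It then remains to apply the classical convexity theorem for the joint numerical range of three Hermitian $n\times n$ matrices when $n\geq 3$ (the Au-Yeung--Tsing theorem, invoked in this form in \cite{lipoonsze}): this compressed joint numerical range is convex, hence contains the segment $[p_1,p_2]$; and that segment lies in $DW(T)$ because every unit vector of $\mathcal{M}$ is a unit vector of $\mathcal{H}$. As $p_1,p_2$ were arbitrary, $DW(T)$ is convex.

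The main obstacle is genuinely the dimension hypothesis. One cannot compress to the two-dimensional subspace $\mathrm{span}\{x_1,x_2\}$, since the joint numerical range of three $2\times 2$ Hermitian matrices is in general the (nonconvex) surface of an ellipsoid; this is precisely why a three-dimensional $\mathcal{M}$ must be retained and why convexity can fail when $\dim(\mathcal{H})=2$. Accordingly, the nontrivial content sits in the boundary case $n=3$ of the Au-Yeung--Tsing theorem, whereas the passage to an arbitrary, possibly incomplete or infinite-dimensional, space $\mathcal{H}$ is handled entirely by the compression to a single fixed three-dimensional subspace.
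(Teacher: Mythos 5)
Your argument is correct, but note that the paper itself supplies no proof of this statement: Theorem \ref{convexdavis} is simply recalled from \cite{lipoonsze}, accompanied by the unproved remark that it ``remains true for operators acting on an inner-product space which is not necessarily complete.'' What you have written is essentially the standard argument behind that citation --- identify $DW(T)$ with the joint numerical range of the three Hermitian forms whose quadratic forms are $\mathrm{Re}\langle Tx\mid x\rangle$, $\mathrm{Im}\langle Tx\mid x\rangle$ and $\langle Tx\mid Tx\rangle$, compress to a fixed three-dimensional subspace $\mathcal{M}$ containing the two given unit vectors, and invoke the Au-Yeung--Tsing convexity theorem for triples of $3\times 3$ Hermitian matrices --- so there is no divergence of method, only a difference in level of detail. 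Two points in your write-up are genuinely the right moves and worth highlighting: first, you compress the \emph{forms} rather than the operator (replacing $T$ by $P_{\mathcal{M}}TP_{\mathcal{M}}$ would corrupt the coordinate $\|Tx\|^{2}$, since $Tx$ need not lie in $\mathcal{M}$); second, your argument never uses completeness of $\mathcal{H}$ nor the existence of $T^{*}$, because every finite-dimensional subspace is automatically complete and the three sesquilinear forms are defined directly from $T$. This second point means your proof actually substantiates the paper's otherwise unjustified completeness remark, which is not idle: it is exactly what is needed when Theorem \ref{convexdavis} is applied in the proof of Theorem \ref{mainold} to the inner-product space $\bigl(\overline{\mathcal{R}(A)},\langle\cdot\mid\cdot\rangle_{A_{0}}\bigr)$, which is complete only when $\mathcal{R}(A)$ is closed. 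Your closing observation about the failure for $\dim(\mathcal{H})=2$ (the shell of a $2\times 2$ matrix being an ellipsoid surface) correctly locates why the hypothesis $\dim(\mathcal{H})\geq 3$ cannot be dropped.
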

The following result extends the above theorem to the case of $A$-bounded operators.

\begin{theorem}\label{mainold}
Let $T\in \mathcal{B}_{A^{1/2}}(\mathcal{H})$ with $\text{dim}(\mathcal{H})\geq 3$. Then, $DW_A(T)$ is a convex subset of $\mathbb{C}^2$.
\end{theorem}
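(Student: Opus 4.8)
The plan is to transport the problem to a genuine (classical) Davis--Wielandt shell of an operator on an inner product space, where \Cref{convexdavis} can be applied. Since $T\in\mathcal{B}_{A^{1/2}}(\mathcal{H})$, \Cref{prop_arias} furnishes a unique $\widetilde{T}\in\mathcal{B}(\mathbf{R}(A^{1/2}))$ with $Z_AT=\widetilde{T}Z_A$, that is $\widetilde{T}(Ax)=A(Tx)$ for every $x\in\mathcal{H}$. In particular the subspace $\mathcal{R}(A)$ is invariant under $\widetilde{T}$, so $\widetilde{T}$ restricts to a bounded operator on the inner product space $\big(\mathcal{R}(A),(\cdot,\cdot)\big)$, which need not be complete.

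The key step is to identify $DW_A(T)$ with a true Davis--Wielandt shell. Using \eqref{usefuleq001} and \eqref{usefuleq01} exactly as in the computation \eqref{newidea}, for any $x$ with $\|x\|_A=1$ one has $\langle Tx\mid x\rangle_A=(\widetilde{T}Ax,Ax)$ and $\|Tx\|_A^2=\|\widetilde{T}Ax\|_{\mathbf{R}(A^{1/2})}^2$, while $\|Ax\|_{\mathbf{R}(A^{1/2})}=\|x\|_A=1$. Since the map $x\mapsto Ax$ carries $\{x:\|x\|_A=1\}$ onto the unit sphere $\{y\in\mathcal{R}(A):\|y\|_{\mathbf{R}(A^{1/2})}=1\}$, this yields
\begin{equation*}
DW_A(T)=\big\{\big((\widetilde{T}y,y),\,\|\widetilde{T}y\|_{\mathbf{R}(A^{1/2})}^2\big)\,;\;y\in\mathcal{R}(A),\;\|y\|_{\mathbf{R}(A^{1/2})}=1\big\},
\end{equation*}
which is precisely the classical Davis--Wielandt shell of the restriction $\widetilde{T}|_{\mathcal{R}(A)}$, viewed as an operator on the inner product space $\mathcal{R}(A)$. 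It then remains to invoke \Cref{convexdavis} in the form valid for operators on a not necessarily complete inner product space, applied to $\widetilde{T}|_{\mathcal{R}(A)}$ on $\mathcal{R}(A)$, and convexity of $DW_A(T)$ follows at once.

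The main obstacle is the dimension hypothesis. The reduction above shows that what actually controls convexity is the dimension of the reduced space $\mathcal{R}(A)$ (equivalently of $\mathbf{R}(A^{1/2})$), not that of $\mathcal{H}$, since \Cref{convexdavis} requires the underlying space to have dimension at least $3$. One must therefore secure $\dim\mathcal{R}(A)\ge 3$ before applying it: this is automatic when $\mathcal{R}(A)$ is infinite dimensional, and when $\dim\mathcal{R}(A)=1$ the shell collapses to a single point and is trivially convex, but the case $\dim\mathcal{R}(A)=2$ is genuinely delicate, because the Davis--Wielandt shell of a two-dimensional operator may fail to be convex (for instance a nilpotent $\widetilde{T}$ produces a non-convex spindle). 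I would accordingly state and use the hypothesis as $\dim\overline{\mathcal{R}(A)}\ge 3$ (equivalently $\dim\mathbf{R}(A^{1/2})\ge 3$), and verify that this is what the ambient assumption is meant to guarantee.
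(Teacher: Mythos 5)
Your reduction is sound, but it runs along a genuinely different track from the paper's. The paper never leaves $\mathcal{H}$: it splits $\mathcal{H}=\mathcal{N}(A)\oplus\overline{\mathcal{R}(A)}$, uses $T(\mathcal{N}(A))\subseteq\mathcal{N}(A)$ to discard the kernel component, and obtains $DW_A(T)=DW_{A_0}(T_0)$ with $A_0=A|_{\overline{\mathcal{R}(A)}}$ injective and $T_0=P_AT|_{\overline{\mathcal{R}(A)}}$, after which convexity comes from the incomplete-inner-product-space form of \Cref{convexdavis} applied on $\bigl(\overline{\mathcal{R}(A)},\langle\cdot\mid\cdot\rangle_{A_0}\bigr)$. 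You instead push everything through $Z_A$ into $\mathbf{R}(A^{1/2})$ via \Cref{prop_arias} and identify $DW_A(T)$ with the classical shell of $\widetilde{T}|_{\mathcal{R}(A)}$ on the (generally incomplete) inner product space $\mathcal{R}(A)$; your observation that $x\mapsto Ax$ maps the $A$-unit sphere \emph{onto} the unit sphere of $\mathcal{R}(A)$ is exactly what upgrades the closure statement of \Cref{norminraundemi01} to an equality of the shells themselves. The two reduced pictures are in fact isometrically the same: by \eqref{usefuleq01}, $x\mapsto Ax$ is an isometry of $(\overline{\mathcal{R}(A)},\|\cdot\|_{A_0})$ onto $\mathcal{R}(A)\subseteq\mathbf{R}(A^{1/2})$ intertwining $T_0$ with $\widetilde{T}|_{\mathcal{R}(A)}$, so neither route buys extra generality; yours has the mild advantage of reusing the $\mathbf{R}(A^{1/2})$ machinery already set up for \Cref{norminraundemi01}, while the paper's is self-contained on $\mathcal{H}$.

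Your closing caveat is the substantive point, and you are right to insist on it: it names a genuine gap in the theorem as stated, and the paper's own proof has it too. Both reductions land on a space of dimension $\dim\overline{\mathcal{R}(A)}$, and $\dim\mathcal{H}\geq 3$ does not force $\dim\overline{\mathcal{R}(A)}\geq 3$, which is what \Cref{convexdavis} requires; the paper applies that theorem to $DW_{A_0}(T_0)$ without checking this. Concretely, on $\mathbb{C}^3$ take $A=\mathrm{diag}(1,1,0)$ and $T$ the matrix whose only nonzero entry is a $1$ in position $(1,2)$. Then $T(\mathcal{N}(A))=\{0\}\subseteq\mathcal{N}(A)$ and $\|Tx\|_A\leq\|x\|_A$, so $T\in\mathcal{B}_{A^{1/2}}(\mathcal{H})$ and $\dim\mathcal{H}=3$, yet $DW_A(T)=\left\{(z,t)\in\mathbb{C}\times[0,1]\,;\;|z|^2=t(1-t)\right\}$, the hollow shell of a $2\times 2$ nilpotent block: it contains $(0,0)$ and $(0,1)$ but not $(0,\tfrac{1}{2})$, hence is not convex. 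So the hypothesis must indeed be $\dim\overline{\mathcal{R}(A)}\geq 3$, as you propose, under which your argument --- and the paper's, once so repaired --- is complete; your disposal of the residual cases ($\dim\mathcal{R}(A)=1$ gives a single point, $\dim\mathcal{R}(A)=2$ can genuinely fail) is likewise accurate.
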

\begin{proof}
Without loss of generality, we assume that $A$ is not injective (if $A$ is an injective operator, then the result follows trivially by applying Theorem \ref{convexdavis}). Since $\mathcal{H}=\mathcal{N}(A)\oplus \overline{\mathcal{R}(A)}$, then every $x\in\mathcal{H}$ can be written into $x = x_1 + x_2$ with $x_1\in\mathcal{N}(A)$ and $x_2\in \overline{\mathcal{R}(A)}$. Moreover, the fact that $\mathcal{N}(A)=\mathcal{N}(A^{1/2})$ implies that $\|x\|_A=\|x_2\|_A$. So, one has:
\begin{multline*}
DW_A(T)
 = \Bigl\{\left(\langle Tx_1\mid x_2\rangle_A+\langle Tx_2\mid x_2\rangle_A,\|Tx_1+Tx_2\|_A^2\right)\,;\;x_1 \in \mathcal{N}(A),\;x_2 \in \overline{\mathcal{R}(A)},\\
\;\;\text{with}\;\|x_2\|_A=1\;\Bigr\}.
\end{multline*}
On the other hand, since $T\in \mathcal{B}_{A^{1/2}}(\mathcal{H})$, then $T(\mathcal{N}(A))\subset\mathcal{N}(A)$. So, by using the fact that $\langle T x_2\mid x_2\rangle_A = \langle A P_AT x_2\mid x_2\rangle$, we obtain:
$$
DW_A(T)
 = \Bigl\{\left(\langle P_ATx_2\mid x_2\rangle_A,\|P_ATx_2\|_A^2\right)\,;\;x_2 \in \overline{\mathcal{R}(A)},\;\|x_2\|_A=1\;\Bigr\}=DW_{A_0}(T_0),
$$
 where $A_0=A|_{\overline{\mathcal{R}(A)}}$ and $T_0=P_AT|_{\overline{\mathcal{R}(A)}}$. Since $A_0$ is injective on $\overline{\mathcal{R}(A)}$ and $T_0\in \mathcal{B}(\overline{\mathcal{R}(A)})$, then by Theorem \ref{convexdavis} we infer that $DW_{A_0}(T_0)$. Therefore, $DW_A(T)$ is convex as desired.
\end{proof}
\begin{remark}
If $T(\mathcal{N}(A))\nsubseteq\mathcal{N}(A)$ and $\text{dim}(\mathcal{H})\geq 3$, then $DW_A(T)$ is not in general a convex subset of $\mathbb{C}^2$ as it is shown in the following example.
\end{remark}
\begin{example}\label{exampleunbounded}
Let $A = \begin{pmatrix}0&0&0\\0&0&0\\0&0&1\end{pmatrix}$ and $T= \begin{pmatrix}0&0&1\\0&1&0\\1&0&0\end{pmatrix}$ be operators on $\mathbb{C}^3$. A simple calculation shows that $T(\mathcal{N}(A))\nsubseteq\mathcal{N}(A)$ and
\begin{align*}
DW_A(T)
& =\left\{\left(a\overline{b},|b|^2\right);\,(a,b) \in \mathbb{C}^2,\;|b|=1\right\}.
\end{align*}
Moreover, it is not difficult to observe that
\begin{align*}
DW_A(T)
 &=\left\{\left(z,|z|^2\right);\,z\in \mathbb{C}\right\},
\end{align*}
which is not convex. Indeed, clearly we have $(1,1)\in DW_A(T)$ and $(i,1)\in DW_A(T)$ but obviously $(\tfrac{1+i}{2},1)\notin DW_A(T)$.
\end{example}

The following lemma is taken from \cite{bakfeki01}. For the reader's convenience, here we give a proof.
\begin{lemma}\label{lemmhomo}
Let $(E,\langle \cdot\mid \cdot\rangle)$ be a complex inner product space (not necessarily complete) with the associated norm $\|\cdot\|$
and let $A$ be a positive injective operator on $E$. Then, $S(0,1)$ and $S^A(0,1)$ are homeomorphic\index{Homeomorphic}, where $S(0,1)$ and $S^A(0,1)$ are defined as
$$S(0,1):=\{x\in E\,;\;\|x\|=1\}\quad\text{and}\quad S^A(0,1):=\{x\in E\,;\;\|x\|_A=1\}.$$
\end{lemma}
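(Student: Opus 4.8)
The plan is to construct an explicit homeomorphism by radial projection onto the two spheres. Since $A$ is positive and injective, the induced seminorm $\|\cdot\|_A$ is in fact a genuine norm on $E$, so that $\|x\|_A = 0$ if and only if $x = 0$. Accordingly I would define
$$\varphi : S(0,1)\longrightarrow S^A(0,1),\quad \varphi(x) := \frac{x}{\|x\|_A},\qquad \psi : S^A(0,1)\longrightarrow S(0,1),\quad \psi(y) := \frac{y}{\|y\|}.$$
Both maps are well defined: if $x\in S(0,1)$ then $x\neq 0$, hence $\|x\|_A>0$ by the injectivity of $A$, and plainly $\|\varphi(x)\|_A = 1$; dually, if $y\in S^A(0,1)$ then $\|y\|_A = 1$ forces $y\neq 0$, so $\|y\|>0$ and $\|\psi(y)\| = 1$.

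Next I would verify that $\varphi$ and $\psi$ are mutual inverses. For $x\in S(0,1)$ one has $\|\varphi(x)\| = \|x\|/\|x\|_A = 1/\|x\|_A$, whence $\psi(\varphi(x)) = \varphi(x)/\|\varphi(x)\| = x$; symmetrically, for $y\in S^A(0,1)$ one has $\|\psi(y)\|_A = \|y\|_A/\|y\| = 1/\|y\|$, whence $\varphi(\psi(y)) = \psi(y)/\|\psi(y)\|_A = y$. Thus $\varphi$ is a bijection with inverse $\psi$.

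It then remains to check that $\varphi$ and $\psi$ are both continuous for the ambient topology on $E$ (the one induced by $\|\cdot\|$). Here boundedness of $A$ enters: from $\|x\|_A^2 = \langle Ax\mid x\rangle \leq \|A\|\,\|x\|^2$ we get $\|x\|_A \leq \|A\|^{1/2}\|x\|$, so the map $x\mapsto \|x\|_A$ is continuous with respect to $\|\cdot\|$. Since this map is strictly positive on $S(0,1)$, the quotient $\varphi(x) = x/\|x\|_A$ is continuous. Likewise, $y\mapsto \|y\|$ is continuous and strictly positive on $S^A(0,1)$, so $\psi(y) = y/\|y\|$ is continuous. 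Combining the three steps yields that $\varphi$ is a homeomorphism, as claimed.

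The computations are routine; the point that I expect to demand the most care, and the main conceptual obstacle, is the continuity of $\psi$, that is, of $y\mapsto y/\|y\|$ on $S^A(0,1)$. The delicate issue is that the two norms need not be equivalent (this would require $A$ to be bounded below), so one cannot bound $\|y\|$ from below uniformly over $S^A(0,1)$. Fortunately no such uniform bound is needed: continuity of $\psi$ is a pointwise assertion that follows from continuity of $\|\cdot\|$ together with its strict positivity at each point of $S^A(0,1)$, the latter being guaranteed precisely because $\|\cdot\|$ is a genuine norm. The argument must therefore rest on this genuine-norm property rather than on any (in general false) equivalence between $\|\cdot\|$ and $\|\cdot\|_A$.
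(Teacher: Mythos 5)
Your proposal is correct and follows essentially the same route as the paper: the same pair of radial-projection maps $x\mapsto x/\|x\|_A$ and $y\mapsto y/\|y\|$, with injectivity of $A$ guaranteeing that $\|\cdot\|_A$ is a genuine norm and hence that both maps are well defined and mutually inverse. The only difference is that you spell out the continuity verification (via $\|x\|_A\leq \|A\|^{1/2}\|x\|$ and pointwise strict positivity of the norms on the spheres), which the paper leaves as ``it can be seen''; this is a welcome clarification rather than a deviation.
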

\begin{proof}
Let us consider the following maps
\begin{equation*}
\begin{aligned}[t]
  h\colon S(0,1) &\rightarrow S^A(0,1)\\
  x              &\mapsto \frac{x}{\|x\|_A},
\end{aligned}
\quad\text{and}\quad
\begin{aligned}[t]
  g\colon S^A(0,1) &\rightarrow S(0,1)\\
  y                &\mapsto \frac{y}{\|y\|},
\end{aligned}
\end{equation*}
Since $A$ is injective, then $\|x\|_A=0$ if and only if $x=0$. This ensure that $h$ is well-defined. Furthermore, it can be seen that $h$ and $g$
are continuous and inverse of each other. Thus, we deduce that $S^A(0,1)$ and $S(0,1)$ are homeomorphic.
\end{proof}
Now, we are able to prove the following theorem.
\begin{theorem}\label{compactnew}
Let $\mathcal{H}$ be a finite dimensional complex Hilbert space and $A\in \mathcal{B}(\mathcal{H})^+$ be such that $T(\mathcal{N}(A))\subseteq\mathcal{N}(A)$. Then $DW_A(T)$ is a compact subset of $\mathbb{C}^2$.
\end{theorem}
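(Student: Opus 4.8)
The plan is to mimic the reduction used in the proof of \Cref{mainold}, collapsing the problem onto the space $\overline{\mathcal{R}(A)}$ where the seminorm becomes a genuine norm, and then to exhibit $DW_A(T)$ as the continuous image of a compact set. The guiding principle is that the obstruction to compactness lives entirely in $\mathcal{N}(A)$, and the hypothesis $T(\mathcal{N}(A))\subseteq\mathcal{N}(A)$ is precisely what allows us to eliminate it.

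First I would dispose of the trivial case in which $A$ is injective, and otherwise invoke the orthogonal decomposition $\mathcal{H}=\mathcal{N}(A)\oplus\overline{\mathcal{R}(A)}$ together with $\mathcal{N}(A)=\mathcal{N}(A^{1/2})$. Writing each unit vector as $x=x_1+x_2$ with $x_1\in\mathcal{N}(A)$ and $x_2\in\overline{\mathcal{R}(A)}$, and using that $T$ leaves $\mathcal{N}(A)$ invariant exactly as in \Cref{mainold}, the $\mathcal{N}(A)$-component of $x$ drops out of both coordinates and one obtains
$$DW_A(T)=DW_{A_0}(T_0),$$
where $A_0=A|_{\overline{\mathcal{R}(A)}}$ and $T_0=P_AT|_{\overline{\mathcal{R}(A)}}$. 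The payoff of this step is that $A_0$ is an \emph{injective} positive operator on the finite-dimensional inner-product space $\overline{\mathcal{R}(A)}$, so that $\|\cdot\|_{A_0}$ is an honest norm there.

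Next I would realize $DW_{A_0}(T_0)$ as a continuous image. The map $\Phi\colon x\mapsto\left(\langle T_0x\mid x\rangle_{A_0},\|T_0x\|_{A_0}^2\right)$ is continuous on $\overline{\mathcal{R}(A)}$, each coordinate being a polynomial expression in the entries of $x$, and by construction $DW_{A_0}(T_0)=\Phi\big(S^{A_0}(0,1)\big)$, where $S^{A_0}(0,1)=\{x\in\overline{\mathcal{R}(A)}\,;\;\|x\|_{A_0}=1\}$. It then suffices to show that $S^{A_0}(0,1)$ is compact. Applying \Cref{lemmhomo} to the injective positive operator $A_0$ on the finite-dimensional space $\overline{\mathcal{R}(A)}$, the sphere $S^{A_0}(0,1)$ is homeomorphic to the Euclidean unit sphere $S(0,1)$ of $\overline{\mathcal{R}(A)}$, which is compact since the ambient space is finite-dimensional. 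Hence $S^{A_0}(0,1)$ is compact, and therefore $DW_A(T)=\Phi\big(S^{A_0}(0,1)\big)$ is compact as the continuous image of a compact set.

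The only genuine obstacle here is conceptual rather than computational: the $A$-unit ``sphere'' $S^A(0,1)$ in $\mathcal{H}$ itself is never compact when $A$ fails to be injective, because it is invariant under translation by $\mathcal{N}(A)$ and is thus unbounded — this is exactly the mechanism behind the unboundedness phenomenon in \Cref{exampleunbounded}. Consequently the heart of the argument is the reduction step, where the invariance $T(\mathcal{N}(A))\subseteq\mathcal{N}(A)$ is what permits us to substitute the noncompact $S^A(0,1)$ by the compact $S^{A_0}(0,1)$; once that replacement is justified, compactness follows purely formally from the continuity of $\Phi$ and \Cref{lemmhomo}.
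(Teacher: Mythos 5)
Your proposal is correct and takes essentially the same route as the paper's own proof: the reduction $DW_A(T)=DW_{A_0}(T_0)$ borrowed from the proof of \Cref{mainold} (which, as you note, only uses the invariance $T(\mathcal{N}(A))\subseteq\mathcal{N}(A)$), the homeomorphism of $S^{A_0}(0,1)$ with the compact Euclidean sphere of $\overline{\mathcal{R}(A)}$ via \Cref{lemmhomo}, and compactness of $DW_A(T)$ as the continuous image of that sphere under the shell map. Your closing observation about why $S^{A}(0,1)$ itself fails to be compact is a nice supplementary remark but plays no role in, and does not alter, the argument.
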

\begin{proof}
By proceeding as in the proof of Theorem \ref{mainold}, we see that $DW_A(T)=DW_{A_0}(T_0)$, with $A_0=A|_{\overline{\mathcal{R}(A)}}$ and $T_0=P_AT|_{\overline{\mathcal{R}(A)}}\in \mathcal{B}(\overline{\mathcal{R}(A)})$. Moreover, we set
$$\widetilde{S}(0,1):=\left\{x\in \overline{\mathcal{R}(A)}\,;\,\|x\|=1\right\}\;\;\text{and}\;\;S^{A_0}(0,1):=\left\{x\in \overline{\mathcal{R}(A)}\,;\,\|x\|_{A_0}=1\right\}.$$
By Lemma \ref{lemmhomo}, $S^{A_0}(0,1)$ is homeomorphic to $\widetilde{S}(0,1)$ which is compact since $\overline{\mathcal{R}(A)}$ is finite dimensional. Now, let us consider the following map
$$\psi: S^{A_0}(0,1)\to \mathbb{C}^2: x \mapsto (\langle T_0x\mid x\rangle_{A_0},\|T_0x\|_{A_0}^2).$$
It can observed that $\psi$ is continuous. This implies that $DW_{A_0}(T_0)$ is compact. Therefore, $DW_A(T)$ is compact as required.
\end{proof}
The following corollary is a direct consequence of Theorem \ref{compactnew}.
\begin{corollary}
Let $\mathcal{H}$ be a finite dimensional complex Hilbert space. If $T\in\mathcal{B}_{A^{1/2}}(\mathcal{\mathcal{H}})$, then $DW_A(T)$ is a compact subset of $\mathbb{C}^2$.
\end{corollary}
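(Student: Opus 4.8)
The plan is to deduce the corollary immediately from \Cref{compactnew} by checking that its only nontrivial hypothesis is automatically satisfied. \Cref{compactnew} asserts compactness of $DW_A(T)$ in finite dimension under the assumption that the null space $\mathcal{N}(A)$ is invariant for $T$, i.e. $T(\mathcal{N}(A))\subseteq\mathcal{N}(A)$. The corollary differs only in that it replaces this invariance hypothesis by the assumption $T\in\mathcal{B}_{A^{1/2}}(\mathcal{H})$. Thus the entire task reduces to showing that membership in $\mathcal{B}_{A^{1/2}}(\mathcal{H})$ already forces the invariance of $\mathcal{N}(A)$ under $T$.

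First I would recall that $T\in\mathcal{B}_{A^{1/2}}(\mathcal{H})$ precisely means that $T$ is $A$-bounded, that is, there exists $c>0$ with $\|Tx\|_A\leq c\,\|x\|_A$ for all $x\in\mathcal{H}$. As noted in the preliminaries (the observation accompanying \eqref{ineqwich02}), every $A$-bounded operator satisfies $T(\mathcal{N}(A))\subseteq\mathcal{N}(A)$: indeed, if $x\in\mathcal{N}(A)$ then $\|x\|_A=0$, so the $A$-boundedness inequality gives $\|Tx\|_A\leq c\,\|x\|_A=0$, whence $Tx\in\mathcal{N}(A)$ because $\mathcal{N}(A)=\{y\in\mathcal{H};\,\|y\|_A=0\}$.

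Having verified $T(\mathcal{N}(A))\subseteq\mathcal{N}(A)$, I would then simply invoke \Cref{compactnew}, whose remaining hypotheses (namely that $\mathcal{H}$ is finite dimensional and $A\in\mathcal{B}(\mathcal{H})^+$) are granted by the standing assumptions and the statement of the corollary. This yields at once that $DW_A(T)$ is a compact subset of $\mathbb{C}^2$, completing the argument. There is no genuine obstacle here: the content of the corollary lies entirely in \Cref{compactnew}, and the corollary is purely a matter of recognizing that $A$-boundedness supplies the invariance condition for free.
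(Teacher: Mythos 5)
Your proposal is correct and coincides with the paper's intended proof: the authors derive the corollary directly from Theorem \ref{compactnew}, relying on the fact (already recorded alongside \eqref{ineqwich02} in the preliminaries) that every $T\in\mathcal{B}_{A^{1/2}}(\mathcal{H})$ satisfies $T(\mathcal{N}(A))\subseteq\mathcal{N}(A)$. Your explicit verification of that invariance from the inequality $\|Tx\|_A\leq c\|x\|_A$ together with $\mathcal{N}(A)=\{y\in\mathcal{H};\,\|y\|_A=0\}$ is exactly the missing detail the paper leaves implicit.
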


Now, we deal with the $A$-Davis-Wielandt radius of operators in semi-Hilbert spaces.
\begin{definition}
Let $T\in\mathcal{B}(\mathcal{H})$. The $A$-Davis-Wielandt radius of $T$ is given by
\begin{align*}
d\omega_A(T)
&=\displaystyle\sup\left\{\|\lambda\|_2,\;\lambda=(\lambda_1,\lambda_2)\in DW_A(T)\right\}\\
&=\displaystyle\sup\left\{\sqrt{|\langle Tx\mid x\rangle_A|^2+\|Tx\|_A^4}\,;\;x\in \mathcal{H},\;\|x\|_A=1\right\}.
\end{align*}
\end{definition}

\begin{remark}
\noindent (1)\;Clearly, $d\omega_A(T)\geq0$ for every $T\in\mathcal{B}(\mathcal{H})$.
\par \vskip 0.1 cm \noindent (2)\;If $T\in\mathcal{B}(\mathcal{H})$, then $d\omega_A(T)$ can be equal to $+\infty$. Indeed, by using the same operators as in Example \ref{exampleunbounded}, it can be seen that $d\omega_A(T)=+\infty$.
\par \vskip 0.1 cm \noindent (3)\;If $T\in\mathcal{B}_A(\mathcal{H})$, then the $A$-Davis-Wielandt radius of $T$ is given by
$$d\omega_A(T)=\displaystyle\sup\left\{\sqrt{|\langle Tx\mid x\rangle_A|^2+|\langle T^\sharp T x\mid x\rangle_A|^2}\,;\;x\in \mathcal{H},\;\|x\|_A=1\right\}.$$
\end{remark}

In the following proposition, we summarize some basic properties of the $A$-Davis-Wielandt radius of operators.
\begin{proposition}\label{propositionsum}
Let $T,S\in\mathcal{B}_{A^{1/2}}(\mathcal{\mathcal{H}})$. Then, the following properties hold:
\begin{itemize}
\item [(1)] $d\omega_A(T)=0$ if and only if $AT=0$.
\item [(2)] $d\omega_A(UTU^\sharp)=d\omega_A(T)$ for any $A$-unitary operator $U\in \mathcal{B}_A(\mathcal{H})$.
\item [(3)] For all $\mu\in \mathbb{C}$ we have
$$
d\omega_A(\mu T)
\begin{cases}
\geq |\mu|d\omega_A(T)&,\text{ if }\;|\mu|> 1,\\
=d\omega_A(T)&,\text{ if }\;|\mu|=1,\\
\leq|\mu|d\omega_A(T)&,\text{ if }\;|\mu|< 1.
\end{cases}.
$$
\item [(4)] Suppose that $|\mu|\neq1$, then $d\omega_A(\mu T)=|\mu|d\omega_A(T)$ if and only if $AT=0$ or $\mu=0$.
\item [(5)] $d\omega_A(\cdot)$ satisfies
$$\max\{\omega_A(T),\|T\|_A^2\}\leq d\omega_A(T) \leq \sqrt{\omega_A(T)^2+\|T\|_A^4}.$$
\item [(6)]  $d\omega_A(\cdot)$ satisfies
$$d\omega_A(T+S)\leq \sqrt{2(d\omega_A(T)+d\omega_A(S))+4(d\omega_A(T)+d\omega_A(S))^2}.$$
\end{itemize}
\end{proposition}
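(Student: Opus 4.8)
The plan is to reduce every assertion to a single reformulation: writing $f(x):=|\langle Tx\mid x\rangle_A|^2$ and $g(x):=\|Tx\|_A^4$ for a vector $x$ with $\|x\|_A=1$, one has $d\omega_A(T)^2=\sup_{\|x\|_A=1}\big(f(x)+g(x)\big)$, and the elementary Cauchy--Schwarz bound $f(x)\le\|Tx\|_A^2=\sqrt{g(x)}$ holds for every such $x$. With this in hand, parts (1) and (5) I would dispatch first. The inequalities $\max\{\sup f,\sup g\}\le\sup(f+g)\le\sup f+\sup g$ give at once $\max\{\omega_A(T),\|T\|_A^2\}\le d\omega_A(T)\le\sqrt{\omega_A(T)^2+\|T\|_A^4}$, which is (5); and $d\omega_A(T)=0$ forces $\|Tx\|_A=0$ for all unit $x$, i.e. $A^{1/2}Tx=0$, hence $ATx=0$ for all $x$, that is $AT=0$, the converse being obvious, which is (1).

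For (2) I would observe that $T\in\mathcal{B}_{A^{1/2}}(\mathcal{H})$ implies $T(\mathcal{N}(A))\subseteq\mathcal{N}(A)$, so Proposition~\ref{prop2.1}(4) applies and yields $DW_A(UTU^\sharp)=DW_A(T)$; since $d\omega_A$ depends only on $DW_A$, equality of radii follows. Part (3) reduces to the pointwise identity $|\langle\mu Tx\mid x\rangle_A|^2+\|\mu Tx\|_A^4=|\mu|^2\big(f(x)+g(x)\big)+|\mu|^2(|\mu|^2-1)g(x)$; taking square roots and suprema, the three cases are governed by the sign of $|\mu|^2-1$.

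The real work is part (4), and this is where I expect the main obstacle. The converse is immediate ($AT=0$ or $\mu=0$ make both sides vanish by (1)), so assume $|\mu|\neq1$, $\mu\neq0$, and $d\omega_A(\mu T)=|\mu|d\omega_A(T)$; I must deduce $AT=0$. Writing $d\omega_A(\mu T)^2=|\mu|^2\sup_x\big(f(x)+|\mu|^2g(x)\big)$, the hypothesis is equivalent to $\sup_x\big(f+|\mu|^2g\big)=\sup_x(f+g)=:M$. For $|\mu|>1$, choosing a maximizing sequence $(x_n)$ with $f(x_n)+g(x_n)\to M$, the bound $f(x_n)+|\mu|^2g(x_n)\le M$ forces $(|\mu|^2-1)g(x_n)\to0$, hence $g(x_n)\to0$ (for $|\mu|<1$ one instead takes a maximizing sequence for $f+|\mu|^2g$ and compares with $f+g\le M$, reaching the same conclusion). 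The crucial step is then to invoke $f(x_n)\le\sqrt{g(x_n)}\to0$, whence $M=\lim\big(f(x_n)+g(x_n)\big)=0$; by part (1) this gives $AT=0$. The delicate point is precisely that Cauchy--Schwarz ties $f$ to $g$, so that $g\to0$ along a maximizing sequence collapses the entire supremum.

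Finally, for (6) I would argue pointwise for each $A$-unit vector $x$. Using the triangle inequalities $|\langle(T+S)x\mid x\rangle_A|\le|\langle Tx\mid x\rangle_A|+|\langle Sx\mid x\rangle_A|$ and $\|(T+S)x\|_A\le\|Tx\|_A+\|Sx\|_A$ together with $(p+q)^2\le2(p^2+q^2)$, I would bound the numerical-range term by $2\big(\omega_A(T)^2+\omega_A(S)^2\big)$ and the norm term by $4\big(\|T\|_A^2+\|S\|_A^2\big)^2$. The point is then to linearize the first term: since $\omega_A(\cdot)^2\le\|\cdot\|_A^2\le d\omega_A(\cdot)$ by part (5), the numerical-range term is at most $2\big(d\omega_A(T)+d\omega_A(S)\big)$, while $\|T\|_A^2\le d\omega_A(T)$ bounds the norm term by $4\big(d\omega_A(T)+d\omega_A(S)\big)^2$. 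Summing and taking the supremum over $x$ yields exactly $d\omega_A(T+S)^2\le2\big(d\omega_A(T)+d\omega_A(S)\big)+4\big(d\omega_A(T)+d\omega_A(S)\big)^2$, as claimed.
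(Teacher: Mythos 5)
Your proposal is correct in all six parts, and for (1), (2), (5) and (6) it is essentially the paper's argument in a tidier pointwise packaging: in (6), for instance, where the paper chains $d\omega_A(T+S)\le\sqrt{\omega_A(T+S)^2+\|T+S\|_A^4}$ with $\omega_A(T+S)\le\|T+S\|_A\le\|T\|_A+\|S\|_A$ and the AM--GM step $(\|T\|_A+\|S\|_A)^2\le 2(\|T\|_A^2+\|S\|_A^2)\le 2\bigl(d\omega_A(T)+d\omega_A(S)\bigr)$, your pointwise triangle-inequality version produces the identical bound term for term. The genuine differences lie in (3) and (4). For (3) the paper argues via a maximizing sequence for $d\omega_A(\mu T)$, an auxiliary claim that $\|Tx_n\|_A>0$ eventually, and the substitution $\mu=1/\nu$, $S=\nu T$ to obtain the case $|\mu|<1$; your single identity $|\mu|^2f+|\mu|^4g=|\mu|^2(f+g)+|\mu|^2(|\mu|^2-1)g$ handles both cases at once and shows that the paper's positivity claim was never needed. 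For (4) the paper gives no proof at all (it says ``Can be easily checked''), so your argument --- reduce the hypothesis to $\sup(f+|\mu|^2g)=\sup(f+g)=:M<\infty$, force $g(x_n)\to0$ along the appropriate maximizing sequence in each of the two cases, then invoke the Cauchy--Schwarz coupling $f\le\sqrt{g}$ to collapse $M$ to $0$ and conclude $AT=0$ via (1) --- supplies the one verification the paper omits; and you are right that the coupling is indispensable here, since for unrelated nonnegative $f,g$ equality of the two suprema does not force $g\equiv0$. One small point in (1): from $\|Tx\|_A=0$ for all $A$-unit $x$, scaling only yields $ATx=0$ for $x\notin\mathcal{N}(A)$; for $x\in\mathcal{N}(A)$ you should cite \eqref{ineqwich02} (equivalently $T(\mathcal{N}(A))\subseteq\mathcal{N}(A)$, which holds because $T\in\mathcal{B}_{A^{1/2}}(\mathcal{H})$) --- the paper's own proof, which works with the first coordinate $\langle Tx\mid x\rangle_A$ rather than the second, makes exactly this case split explicit, so this is a cosmetic omission rather than a gap.
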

\begin{proof}
\noindent (1)\;If $AT=0$, then clearly $d\omega_A(T)=0$. Conversely, assume that $d\omega_A(T)=0$. This implies that $\langle Tx\mid x\rangle_A=0$ for all $x\in S^A(0,1)$, where $S^A(0,1)$ is denoted to be the $A$-unit sphere of $\mathcal{H}$. Let $y\in \mathcal{H}\setminus  \mathcal{N}(A)$, then $\frac{y}{\|y\|_A}\in S^A(0,1)$ and we have $\langle Ty\mid y\rangle_A=0$. If $y\in \mathcal{N}(A)$, then clearly $\langle Ty\mid y\rangle_A=0$. Hence, we deduce that $\langle Ty\mid y\rangle_A=0$ for all $y\in \mathcal{H}$. So, $AT=0$.
\par \vskip 0.1 cm \noindent (2)\;Follows immediately from Proposition \ref{prop2.1}.
\par \vskip 0.1 cm \noindent (3)\;The case $|\mu|=1$ is obvious. Let $T\in\mathcal{B}_{A^{1/2}}(\mathcal{\mathcal{H}})$ be such that $AT\neq 0$. Let also $\mu\in \mathbb{C}$ be such that $|\mu|> 1$. By definition of $d\omega_A(\mu T)$ there exists a sequence $(x_n)_n\subset \mathcal{H}$ such that $\|x_n\|_A=1$ and
\begin{equation}\label{contradiction}
\lim_{n\to +\infty}\sqrt{|\langle \mu Tx_n\mid x_n\rangle_A|^2+\|\mu Tx_n\|_A^4}=d\omega_A(\mu T)>0.
\end{equation}
We claim that there exists $i\in \mathbb{N}^*$ such that $\|Tx_n\|_A>0$ for all $n>i$, where $\mathbb{N}^*$ is the set of positive natural numbers. Otherwise, assume that for every $i\in \mathbb{N}^*$, there exists a positive integer $n(i)$ such that $n(i) > i$ such that $\|Tx_{n(i)}\|_A = 0$. Therefore there is a subsequence $(x_{n_k})_k$ of $(x_n)_n$ such that
\begin{equation*}
\lim_{k\to +\infty}\sqrt{|\langle \mu Tx_{n_k}\mid x_{n_k}\rangle_A|^2+\|\mu Tx_{n_k}\|_A^4}=0.
\end{equation*}
This leads to a contradiction with \eqref{contradiction}. So, our claim is true. Hence, for all $n>i$ we see that
\begin{align*}
|\mu |^2\left(|\langle Tx_n\mid x_{n}\rangle_A|^2+\|Tx_n\|_A^4\right)
&=|\mu|^2|\langle Tx_n\mid x_{n}\rangle_A|^2+|\mu |^4\|Tx_n\|_A^4-\left(|\mu|^4-|\mu|^2\right)\|Tx_n\|_A^4\\
 &\leq |\mu|^2|\langle Tx_n\mid x_{n}\rangle_A|^2+|\mu |^4\|Tx_n\|_A^4\\
 &\leq  d\omega_A(\mu T)^2.
\end{align*}
Thus, by taking limit and square root, we see that $|\mu|d\omega_A(T)\leq  d\omega_A(\mu T)$. If $AT=0$, then the above result follows trivially.

Now, let $\nu\in \mathbb{C}$ be such that $0<|\nu|<1$. Let also $\mu=\frac{1}{\nu}$ and $S=\nu T$. By applying the previous result we infer that
$$|\mu|d\omega_A(S)\leq d\omega_A(\mu S).$$
This implies that $d\omega_A(\nu T)\leq |\nu|d\omega_A(T)$. If $\nu=0$, the the above result follows obviously.
\par \vskip 0.1 cm \noindent (4)\;Can be easily checked.
\par \vskip 0.1 cm \noindent (5)\;For any $x\in \mathcal{H}$ such that $\|x\|_A=1$ we have
\begin{align*}
\|Tx\|_A^2
&\leq \sqrt{|\langle Tx\mid x\rangle_A|^2+\|Tx\|_A^4}\\
&\leq \displaystyle\sup\left\{\sqrt{|\langle Ty\mid y\rangle_A|^2+\|Ty\|_A^4}\,;\;y\in \mathcal{H},\;\|y\|_A=1\right\}\\
 &=d\omega_A(T).
\end{align*}
 This shows by taking the supremum over all $x\in \mathcal{H}$ with $\|x\|_A=1$ and using \eqref{seminormold}, that
 \begin{equation} \label{dav1}
\|T\|_A^2\leq d\omega_A(T).
 \end{equation}
By proceeding as above, one shows that
 \begin{equation}\label{dav2}
\omega_A(T)\leq d\omega_A(T).
 \end{equation}
 So, combining \eqref{dav1} together with \eqref{dav2} yields
  \begin{equation}\label{dav02}
\max\{\omega_A(T),\|T\|_A^2\}\leq d\omega_A(T).
  \end{equation}
On the other hand, for any $x\in \mathcal{H}$ with $\|x\|_A=1$ we have
\begin{align*}
\sqrt{|\langle Tx\mid x\rangle_A|^2+\|Tx\|_A^4}
&\leq \displaystyle\sup\left\{\sqrt{|\langle Ty\mid y\rangle_A|^2+\|Ty\|_A^4}\,;\;y\in \mathcal{H},\;\|y\|_A=1\right\}\\
&\leq \sqrt{\omega_A(T)^2+\|T\|_A^4}.
\end{align*}
 This shows by taking the supremum over all $x\in \mathcal{H}$ with $\|x\|_A=1$, that
 \begin{equation} \label{dav3}
d\omega_A(T)\leq \sqrt{\omega_A(T)^2+\|T\|_A^4}.
 \end{equation}
 So, by combining \eqref{dav3} together with \eqref{dav02} we get the desired result.
\par \vskip 0.1 cm \noindent (6)\;It is well known that $\omega_A(T)\leq \|T\|_A$ for all $T\in\mathcal{B}_{A^{1/2}}(\mathcal{\mathcal{H}})$. This implies, by using \eqref{dav3}, that
\begin{align*}
d\omega_A(T+S)
&\leq \sqrt{\omega_A(T+S)^2+\|T+S\|_A^4}\\
&\leq \sqrt{\|T+S\|_A^2+\|T+S\|_A^4} \\
&\leq \sqrt{(\|T\|_A+\|S\|_A)^2+(\|T\|_A+\|S\|_A)^4}.
\end{align*}
On the other hand, by using the arithmetic-geometric mean inequality and \eqref{dav1}, we obtain
\begin{align*}
(\|T\|_A+\|S\|_A)^2
&=\|T\|_A^2+2\|T\|_A\|S\|_A+\|S\|_A^2\\
&\leq 2\|T\|_A^2+2\|S\|_A^2 \\
&\leq 2(d\omega_A(T)+d\omega_A(S)).
\end{align*}
So, the desired estimate is proved.
\end{proof}

\begin{remark}
\begin{itemize}
  \item [(1)] The inequalities in the assertion $(5)$ of the above proposition are sharp even if $A=I$ (see \cite{zamanilma2018}).
  \item [(2)] In view of the assertions $(1)$ and $(6)$ of the above proposition, one can observe that $d\omega_A(\cdot)$ is neither a norm nor a seminorm on $\mathcal{B}_{A^{1/2}}(\mathcal{H})$.
\end{itemize}
\end{remark}

Recently, K.Feki introduced in \cite{kais01} the concept of $A$-normaloid operators as follows.
\begin{definition}
An operator $T\in \mathcal{B}_{A^{1/2}}(\mathcal{H})$ is said to be $A$-normaloid if $r_A(T)=\|T\|_A$, where
$$r_A(T)=\displaystyle\lim_{n\to\infty}\|T^n\|_A^{\frac{1}{n}}.$$
\end{definition}

The following proposition gives some characterizations of $A$-normaloid operators.
\begin{proposition}(\cite{kais01})\label{charactenormailoid}
Let $T\in \mathcal{B}_{A^{1/2}}(\mathcal{H})$. Then, the following assertions are equivalent:
\begin{itemize}
  \item [(1)] $T$ is $A$-normaloid.
  \item [(2)] $\|T^n\|_A=\|T\|_A^n$ for all $n\in \mathbb{N}^*$.
  \item [(3)] $\omega_A(T)=\|T\|_A$.
    \item [(4)] There exists a sequence $(x_n)_n\subset\mathcal{H}$ such that $\|x_n\|_A=1$,
    \begin{equation*}
    \lim_{n\to \infty}\|T x_n\|_A=\|T\|_A \;\text{  and  }\; \lim_{n\to \infty}|\langle T x_n\mid x_n\rangle_A|=\omega_A(T).
    \end{equation*}
\end{itemize}
\end{proposition}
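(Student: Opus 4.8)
The plan is to transport the entire statement to the genuine Hilbert space $\mathbf{R}(A^{1/2})$ and there invoke (and reprove) the classical theory of normaloid operators. First I would record the multiplicativity of the representation $T\mapsto\widetilde{T}$ from Proposition \ref{prop_arias}: since $Z_AT^n=\widetilde{T}Z_AT^{n-1}=\cdots=\widetilde{T}^{\,n}Z_A$, the uniqueness clause of Proposition \ref{prop_arias} forces $\widetilde{T^n}=\widetilde{T}^{\,n}$ for every $n\in\mathbb{N}^*$. Combined with Lemma \ref{imporeq2009} this gives $\|T^n\|_A=\|\widetilde{T}^{\,n}\|_{\mathcal{B}(\mathbf{R}(A^{1/2}))}$, whence $r_A(T)=r(\widetilde{T})$, together with $\|T\|_A=\|\widetilde{T}\|$ and $\omega_A(T)=\omega(\widetilde{T})$. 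Consequently each of the four assertions about $T$ is \emph{verbatim} the corresponding classical assertion about the genuine bounded operator $S:=\widetilde{T}$ on the Hilbert space $\mathbf{R}(A^{1/2})$ (the dictionary for assertion $(4)$ being provided by the density of $\mathcal{R}(A)$ in $\mathbf{R}(A^{1/2})$, exactly as exploited in Lemma \ref{norminraundemi01}), so it suffices to prove the equivalences for $S$.

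I would then dispatch the routine implications. For $(1)\Leftrightarrow(2)$ I would combine Gelfand's formula $r(S)=\lim_n\|S^n\|^{1/n}$ with submultiplicativity $\|S^n\|\le\|S\|^n$ and the elementary bound $r(S)\le\|S^n\|^{1/n}$: if $r(S)=\|S\|$ these squeeze to $\|S^n\|=\|S\|^n$, and the converse is immediate by taking $n$th roots and letting $n\to\infty$. For $(1)\Leftrightarrow(3)$ I would use the universal inequalities $r(S)\le\omega(S)\le\|S\|$, which make $(1)\Rightarrow(3)$ trivial; for $(3)\Rightarrow(1)$ I would choose unit vectors $u_n$ with $|\langle Su_n\mid u_n\rangle|\to\omega(S)=\|S\|$ and use the orthogonal splitting $Su_n=\langle Su_n\mid u_n\rangle u_n+r_n$, $r_n\perp u_n$, to get $\|r_n\|^2=\|Su_n\|^2-|\langle Su_n\mid u_n\rangle|^2\to0$; a subsequence then yields an approximate eigenvalue $\lambda\in\sigma_{ap}(S)$ with $|\lambda|=\|S\|$, forcing $r(S)\ge\|S\|$.

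For $(3)\Leftrightarrow(4)$ the direction $(3)\Rightarrow(4)$ is the same squeeze: a numerical-radius maximizing sequence satisfies $|\langle Su_n\mid u_n\rangle|\le\|Su_n\|\le\|S\|=\omega(S)$, so automatically $\|Su_n\|\to\|S\|$, and transporting $u_n$ back to $A$-unit vectors gives $(4)$. The genuinely delicate step, which I expect to be the main obstacle, is $(4)\Rightarrow(3)$: here the residual $r_n$ need not a priori vanish. I would argue by contradiction. Write $Su_n=c_nu_n+r_n$ with $r_n\perp u_n$; the hypotheses give $|c_n|\to\omega(S)$ and $\|r_n\|^2\to\|S\|^2-\omega(S)^2=:d^2$, and I would assume $d>0$. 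The key idea is to compress $S$ to $V_n:=\mathrm{span}\{u_n,\widehat{r}_n\}$; since $Su_n\in V_n$, the $2\times2$ compression $M_n$ satisfies $\|M_ne_1\|=\|Su_n\|$ and, crucially, $\omega(M_n)\le\omega(S)$ because $W(M_n)\subseteq W(S)$.

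Passing to a convergent subsequence of the bounded entries and rotating $S$ by a unimodular scalar, the limiting matrix takes the form $M=\begin{pmatrix}\omega(S)&\beta\\ d&\epsilon\end{pmatrix}$ with $\|M\|^2=\omega(S)^2+d^2$ attained at $e_1$; computing $M^*M$ then forces $\beta\,\omega(S)+\epsilon\,d=0$. Testing $\langle Mz\mid z\rangle$ along $z=(1,t)/\sqrt{1+t^2}$ and differentiating at $t=0$ gives $\tfrac{d}{dt}\big|\langle Mz\mid z\rangle\big|^2\big|_{0}=2d\big(\omega(S)-\mathrm{Re}\,\epsilon\big)$. Since $\mathrm{Re}\,\epsilon\le|\epsilon|\le\omega(S)$, either this derivative is strictly positive, so some $z$ violates $\omega(M)\le\omega(S)$, or $\mathrm{Re}\,\epsilon=\omega(S)$, forcing $\epsilon=\omega(S)$ and hence $M$ normal with $\omega(M)=\sqrt{\omega(S)^2+d^2}>\omega(S)$; both alternatives contradict $\omega(M)\le\omega(S)$. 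Therefore $d=0$, i.e. $\omega(S)=\|S\|$, which is $(3)$. Undoing the identification $S=\widetilde{T}$ then closes the chain $(1)\Leftrightarrow(2)\Leftrightarrow(3)\Leftrightarrow(4)$ for $T$.
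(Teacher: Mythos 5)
Your proof is correct, and it is substantially more self-contained than what the paper does: the paper states this proposition with no proof at all, importing it wholesale from \cite{kais01}, and where it needs the underlying facts (in the proof of Theorem \ref{equinew}) it again cites \cite{kais01} for the reduction ``$T$ is $A$-normaloid iff $\widetilde{T}$ is normaloid'' and then quotes the Chan--Chan characterization (Theorem \ref{thm002}) as a black box. Your first step --- transporting everything to $\mathbf{R}(A^{1/2})$ via $\widetilde{T^n}=\widetilde{T}^{\,n}$ (legitimate, since $\mathcal{B}_{A^{1/2}}(\mathcal{H})$ is a subalgebra, so the uniqueness clause of Proposition \ref{prop_arias} applies to $T^n$), Lemma \ref{imporeq2009}, and the density argument of Lemma \ref{norminraundemi01} --- is exactly the paper's standard technique, so there you coincide with its methodology. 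Where you genuinely diverge is the classical part: observe that your condition $(4')$ for $S=\widetilde{T}$ is precisely the equality condition $d\omega(S)=\sqrt{\omega(S)^2+\|S\|^4}$ (a maximizing sequence for $|\langle Su_n\mid u_n\rangle|^2+\|Su_n\|^4$ must maximize both terms separately, and conversely), so your compression proof of $(4')\Rightarrow(3')$ is in effect an independent proof of \cite[Proposition 2]{chanchan}, which neither this paper nor its quoted statements ever prove. I checked the $2\times 2$ argument and it is sound: $\|Me_1\|=\|M\|=\|S\|$ does force $M^{*}Me_1=\|M\|^2e_1$, whence $\beta\,\omega(S)+\epsilon\,d=0$; your derivative value $2d(\omega(S)-\mathrm{Re}\,\epsilon)$ is correct given that relation; and the dichotomy is exhaustive because $\epsilon$ is a limit of points of $W(S)$, so $|\epsilon|\le\omega(S)$ (in the boundary case one gets $\epsilon=\omega(S)$, $\beta=-d$, and $M^{*}M=(\omega(S)^2+d^2)I$, so $\omega(M)=r(M)=\sqrt{\omega(S)^2+d^2}$, a contradiction). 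Two small points you should record explicitly: the simplification of the derivative divides by $\omega(S)$, but the case $\omega(S)=0$ is vacuous since $\|S\|\le 2\,\omega(S)$ on a complex Hilbert space forces $S=0$ and hence $d=0$; and in the rotation step one should note that replacing $S$ by $\mu S$ rotates $\hat{r}_n$ by $\mu$ as well, so the $(2,1)$ entry $\|r_n\|$ of the compression stays real and positive --- this is what makes your claimed normal form of $M$ attainable. In short, the citation route the paper takes buys brevity; your route buys a complete proof independent of \cite{kais01} and \cite{chanchan}.
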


Now, we aim to characterize $A$-normaloid operators in terms of their Davis-Wielandt radius. We need the following lemma.
\begin{lemma}\label{norminraundemi}
Let $T\in \mathcal{B}_{A^{1/2}}(\mathcal{H})$. Then, $d\omega_A(T)=d\omega(\widetilde{T})$.
\end{lemma}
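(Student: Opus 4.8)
The plan is to recognize that both quantities are suprema of one and the same continuous functional, taken over the two sets $DW_A(T)$ and $DW(\widetilde{T})$, and then to invoke Lemma~\ref{norminraundemi01}, which asserts that these two sets have the same closure.

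First I would observe that, directly from the definitions, $d\omega_A(T)$ and $d\omega(\widetilde{T})$ can both be written as the supremum of the Euclidean norm $\|\cdot\|_2$ on $\mathbb{C}^2$ over the corresponding Davis-Wielandt shell. Indeed, for $\lambda=(\lambda_1,\lambda_2)\in DW_A(T)$ one has $\lambda_1=\langle Tx\mid x\rangle_A$ and $\lambda_2=\|Tx\|_A^2$ for some vector $x$ with $\|x\|_A=1$, so that $\|\lambda\|_2=\sqrt{|\langle Tx\mid x\rangle_A|^2+\|Tx\|_A^4}$; taking the supremum recovers exactly $d\omega_A(T)=\sup\{\|\lambda\|_2\,;\,\lambda\in DW_A(T)\}$. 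The identical computation carried out on the Hilbert space $\mathbf{R}(A^{1/2})$ yields $d\omega(\widetilde{T})=\sup\{\|\lambda\|_2\,;\,\lambda\in DW(\widetilde{T})\}$.

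The key step is the elementary fact that the supremum of a continuous function over a set coincides with its supremum over the closure of that set. Since $\lambda\mapsto\|\lambda\|_2$ is continuous on $\mathbb{C}^2$, for any subset $S\subseteq\mathbb{C}^2$ we have $\sup\{\|\lambda\|_2\,;\,\lambda\in S\}=\sup\{\|\lambda\|_2\,;\,\lambda\in \overline{S}\}$: the inequality ``$\leq$'' is trivial because $S\subseteq\overline{S}$, and the reverse follows by approximating any $\lambda\in\overline{S}$ by a sequence in $S$ and passing to the limit in $\|\cdot\|_2$. Applying this with $S=DW_A(T)$ and with $S=DW(\widetilde{T})$ gives
\begin{equation*}
d\omega_A(T)=\sup_{\lambda\in\overline{DW_A(T)}}\|\lambda\|_2 \quad\text{and}\quad d\omega(\widetilde{T})=\sup_{\lambda\in\overline{DW(\widetilde{T})}}\|\lambda\|_2.
\end{equation*}

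Finally, I would invoke Lemma~\ref{norminraundemi01}, which gives $\overline{DW_A(T)}=\overline{DW(\widetilde{T})}$, whence the two suprema displayed above are equal and the conclusion $d\omega_A(T)=d\omega(\widetilde{T})$ follows at once. There is no real obstacle in this argument; the only point that deserves care is the passage from a shell to its closure, but precisely because $\|\cdot\|_2$ is continuous and the two Davis-Wielandt radii are genuine suprema rather than maxima, this passage is harmless, and it is exactly what makes Lemma~\ref{norminraundemi01}, stated only up to closure, sufficient to conclude.
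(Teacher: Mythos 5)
Your proposal is correct and is essentially the paper's own argument: the authors likewise combine Lemma~\ref{norminraundemi01} with the observation that $\sup\overline{\Omega}=\sup\Omega$ (in the sense of the supremum of $\|\cdot\|_2$) for any $\Omega\subseteq\mathbb{C}^2$. Your write-up merely makes explicit the continuity argument that the paper leaves implicit, so there is nothing to change.
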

\begin{proof}
In view of Lemma \ref{norminraundemi01}, we have $\overline{DW_A(T)}=\overline{DW(\widetilde{T})}$. This yields that $d\omega_A(T)=d\omega(\widetilde{T})$ since $\sup \overline{\Omega}=\sup \Omega$ for every subset $\Omega$ of $\mathbb{C}^2$.
\end{proof}

We are now in a position to establish one of our main results. Our result generalizes Theorem \ref{thm002}.

\begin{theorem}\label{equinew}
Let $T\in \mathcal{B}_{A^{1/2}}(\mathcal{H})$. Then, the following assertions are equivalent:
\begin{itemize}
  \item [(1)] $T$ is $A$-normaloid.
  \item [(2)] $d\omega_A(T)=\sqrt{\omega_A(T)^2+\|T\|_A^4}$.
\end{itemize}
\end{theorem}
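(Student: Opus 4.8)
The plan is to reduce the statement to the already-established classical characterization of normaloid operators (Theorem \ref{thm002}) via the isometric machinery developed for the Hilbert space $\mathbf{R}(A^{1/2})$. The central idea is that passing from $T$ to $\widetilde{T}$ simultaneously preserves all three quantities appearing in the statement, so the $A$-version of the equivalence is literally the classical version applied to $\widetilde{T}$.

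First I would invoke Proposition \ref{prop_arias} to obtain the unique operator $\widetilde{T}\in \mathcal{B}(\mathbf{R}(A^{1/2}))$ satisfying $Z_AT=\widetilde{T}Z_A$, since $T\in \mathcal{B}_{A^{1/2}}(\mathcal{H})$. Next I would recall the three key translation identities already proved in the excerpt: by Lemma \ref{imporeq2009} we have $\|T\|_A=\|\widetilde{T}\|_{\mathcal{B}(\mathbf{R}(A^{1/2}))}$ and $\omega_A(T)=\omega(\widetilde{T})$, and by Lemma \ref{norminraundemi} we have $d\omega_A(T)=d\omega(\widetilde{T})$. With these in hand, the equivalence is transported across the isometric correspondence: the equality
\begin{equation*}
d\omega_A(T)=\sqrt{\omega_A(T)^2+\|T\|_A^4}
\end{equation*}
holds if and only if
\begin{equation*}
d\omega(\widetilde{T})=\sqrt{\omega(\widetilde{T})^2+\|\widetilde{T}\|_{\mathcal{B}(\mathbf{R}(A^{1/2}))}^4}.
\end{equation*}

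Now I would apply Theorem \ref{thm002} on the Hilbert space $\mathbf{R}(A^{1/2})$: the right-hand equality is equivalent to $\widetilde{T}$ being normaloid, that is, $\omega(\widetilde{T})=\|\widetilde{T}\|_{\mathcal{B}(\mathbf{R}(A^{1/2}))}$. Translating back through Lemma \ref{imporeq2009} once more, this reads $\omega_A(T)=\|T\|_A$, which by the equivalence (1)$\Leftrightarrow$(3) of Proposition \ref{charactenormailoid} is exactly the statement that $T$ is $A$-normaloid. Chaining these equivalences yields (1)$\Leftrightarrow$(2). The only point requiring care is that Theorem \ref{thm002} is stated for operators on a Hilbert space, so I must confirm that $\mathbf{R}(A^{1/2})$ is a genuine complex Hilbert space and that $\widetilde{T}$ is a bona fide bounded operator on it; both are guaranteed by the de Branges--Rovnyak construction recalled before Proposition \ref{prop_arias} and by Proposition \ref{prop_arias} itself, so no real obstacle arises. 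Essentially all the work has been front-loaded into the translation lemmas, and the proof is a short diagram-chase through the dictionary $T\leftrightarrow\widetilde{T}$.
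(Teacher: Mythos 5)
Your proof is correct and takes essentially the same route as the paper: both reduce the statement to the classical Theorem~\ref{thm002} applied to $\widetilde{T}$ on the Hilbert space $\mathbf{R}(A^{1/2})$ and then translate $d\omega$, $\omega$ and the operator norm back via Lemma~\ref{norminraundemi} and Lemma~\ref{imporeq2009}. The only cosmetic difference is that the paper directly cites the fact from \cite{kais01} that $T$ is $A$-normaloid if and only if $\widetilde{T}$ is normaloid, whereas you re-derive this transfer from Proposition~\ref{charactenormailoid} combined with $\omega_A(T)=\omega(\widetilde{T})$ and $\|T\|_A=\|\widetilde{T}\|_{\mathcal{B}(\mathbf{R}(A^{1/2}))}$, which is an equally valid and slightly more self-contained justification of the same step.
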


\begin{proof}
Notice first that, it was shown in \cite{kais01} that an operator $T\in \mathcal{B}_{A^{1/2}}(\mathcal{H})$ is $A$-normaloid if and only if $\widetilde{T}\in \mathcal{B}(\mathbf{R}(A^{1/2}))$ is a normaloid operator on the Hilbert space $\mathbf{R}(A^{1/2})$. So, by Theorem \ref{thm002}, $\widetilde{T}$ is normaloid if and only if $d\omega(\widetilde{T})=\sqrt{\omega(\widetilde{T})^2+\|T\|_{\mathcal{B}(\mathbf{R}(A^{1/2}))}^4}$. Hence the proof is complete by applying \ref{imporeq2009} together with Lemma \ref{norminraundemi}.
\end{proof}

\section{The $A$-seminorms-parallelism of $A$-bounded operators}\label{s3}
In this section, we study the parallelism of $A$-bounded operators with respect to $\|\cdot\|_A$ and $\omega_A(\cdot)$. We start by defining the first new kind of parallelism.
\begin{definition}
Let $T,S\in \mathcal{B}_{A^{1/2}}(\mathcal{H})$. We say that $T$ is $A$-seminorm-parallel to $S$, in short $T\parallel_A S$, if there exists $\lambda\in \mathbb{T}:=\{\alpha\in \mathbb{C}\,;\;|\alpha|=1 \}$ such that
$$\|T+\lambda S\|_A=\|T\|_A+\|S\|_A.$$
\end{definition}
\begin{example}
Let $T,S\in \mathcal{B}_{A^{1/2}}(\mathcal{H})$ be linearly dependent operators. Then there exists $\alpha\in\mathbb{C}\setminus \{0\}$ such that $S=\alpha T$.
By letting $\lambda=\frac{\overline{\alpha}}{|\alpha|}$, we see that
\begin{align*}
\|T + \lambda S\|_A= \|T + |\alpha| T\|_A = (1 + |\alpha|)\|T\|_A = \|T\|_A+\|S\|_A,
\end{align*}
So, we deduce that $T \parallel_A S$.
\end{example}

The following theorem gives a necessary and sufficient condition for $T\in \mathcal{B}_{A^{1/2}}(\mathcal{H})$ to be $A$-seminorm-parallel to $S\in \mathcal{B}_{A^{1/2}}(\mathcal{H})$. Our result generalizes Theorem \ref{paranorm}.
\begin{theorem}\label{main1}
Let $T,S\in \mathcal{B}_{A^{1/2}}(\mathcal{H})$. Then, the following assertions are equivalent:
\begin{itemize}
  \item [(1)] $T\parallel_AS$.
  \item [(2)] There exists a sequence $(x_n)_n\subset\mathcal{H}$ such that $\|x_n\|_A=1$,
    \begin{equation*}
    \lim_{n\to \infty}|\langle T x_n\mid Sx_n\rangle_A|=\|T\|_A\|S\|_A.
    \end{equation*}
\end{itemize}
\end{theorem}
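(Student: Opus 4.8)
The plan is to transfer the problem to the Hilbert space $\mathbf{R}(A^{1/2})$ through the operator $\widetilde{T}$ associated to $T$ by Proposition \ref{prop_arias}, and then to invoke the Hilbert-space characterization of norm-parallelism in Theorem \ref{paranorm}. First I would record that the correspondence $T\mapsto\widetilde{T}$ is linear: from the defining relation $Z_AT=\widetilde{T}Z_A$, the uniqueness in Proposition \ref{prop_arias}, and the density of $\mathcal{R}(A)$ in $\mathbf{R}(A^{1/2})$, one gets $\widetilde{T+\lambda S}=\widetilde{T}+\lambda\widetilde{S}$ for all $\lambda\in\mathbb{C}$. Combined with Lemma \ref{imporeq2009}(1), which gives $\|X\|_A=\|\widetilde{X}\|_{\mathcal{B}(\mathbf{R}(A^{1/2}))}$, this shows that for $\lambda\in\mathbb{T}$,
\[
\|T+\lambda S\|_A=\|T\|_A+\|S\|_A \iff \|\widetilde{T}+\lambda\widetilde{S}\|_{\mathcal{B}(\mathbf{R}(A^{1/2}))}=\|\widetilde{T}\|_{\mathcal{B}(\mathbf{R}(A^{1/2}))}+\|\widetilde{S}\|_{\mathcal{B}(\mathbf{R}(A^{1/2}))}.
\]
Hence $T\parallel_A S$ in $\mathcal{H}$ is equivalent to the Hilbert-space parallelism $\widetilde{T}\parallel\widetilde{S}$ in $\mathbf{R}(A^{1/2})$.

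Next I would apply Theorem \ref{paranorm} to $\widetilde{T},\widetilde{S}\in\mathcal{B}(\mathbf{R}(A^{1/2}))$: the relation $\widetilde{T}\parallel\widetilde{S}$ holds if and only if there is a sequence of unit vectors $(u_n)_n$ in $\mathbf{R}(A^{1/2})$ with $\lim_n|(\widetilde{T}u_n,\widetilde{S}u_n)|=\|\widetilde{T}\|\,\|\widetilde{S}\|$. The remaining task is to translate this condition, stated for unit vectors of $\mathbf{R}(A^{1/2})$, into the asserted condition for $A$-unit vectors of $\mathcal{H}$. The key computation, valid for any $x\in\mathcal{H}$, is $\widetilde{T}(Ax)=Z_ATx=A(Tx)$, so that by \eqref{usefuleq001}
\[
(\widetilde{T}Ax,\widetilde{S}Ax)=(A\,Tx,A\,Sx)=\langle Tx\mid Sx\rangle_A,
\]
while \eqref{usefuleq01} gives $\|Ax\|_{\mathbf{R}(A^{1/2})}=\|x\|_A$. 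For the implication $(2)\Rightarrow(1)$ this is immediate: if $(x_n)_n$ is a sequence of $A$-unit vectors realizing the stated limit, then $u_n:=Ax_n$ are unit vectors of $\mathbf{R}(A^{1/2})$ with $|(\widetilde{T}u_n,\widetilde{S}u_n)|=|\langle Tx_n\mid Sx_n\rangle_A|\to\|T\|_A\|S\|_A=\|\widetilde{T}\|\,\|\widetilde{S}\|$, so $\widetilde{T}\parallel\widetilde{S}$ and therefore $T\parallel_A S$.

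The harder direction is $(1)\Rightarrow(2)$, and here lies the main obstacle: the sequence $(u_n)_n$ produced by Theorem \ref{paranorm} lives in $\mathbf{R}(A^{1/2})$ and need not be of the form $Ax$. To remedy this I would use the density of $\mathcal{R}(A)$ in $\mathbf{R}(A^{1/2})$: for each $n$ choose $\xi_n\in\mathcal{H}$ with $\|A\xi_n-u_n\|_{\mathbf{R}(A^{1/2})}<1/n$, and normalize by setting $x_n:=\xi_n/\|A\xi_n\|_{\mathbf{R}(A^{1/2})}$, so that $Ax_n=A\xi_n/\|A\xi_n\|_{\mathbf{R}(A^{1/2})}$ is a unit vector and $\|x_n\|_A=\|Ax_n\|_{\mathbf{R}(A^{1/2})}=1$. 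Since $\|u_n\|=1$ forces $\|A\xi_n\|_{\mathbf{R}(A^{1/2})}\to1$, one gets $\|Ax_n-u_n\|_{\mathbf{R}(A^{1/2})}\to0$, and then the bilinear continuity estimate
\[
\big|(\widetilde{T}Ax_n,\widetilde{S}Ax_n)-(\widetilde{T}u_n,\widetilde{S}u_n)\big|\leq \|\widetilde{T}\|\,\|\widetilde{S}\|\,\big(\|Ax_n\|+\|u_n\|\big)\,\|Ax_n-u_n\|_{\mathbf{R}(A^{1/2})}\to0
\]
shows $|(\widetilde{T}Ax_n,\widetilde{S}Ax_n)|\to\|\widetilde{T}\|\,\|\widetilde{S}\|$. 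Translating back through the identity above yields $|\langle Tx_n\mid Sx_n\rangle_A|\to\|T\|_A\|S\|_A$, which is exactly assertion $(2)$. I expect the only delicate points to be verifying that the normalization is legitimate for large $n$ (that is, $A\xi_n\neq0$ eventually, which is guaranteed by $\|A\xi_n\|_{\mathbf{R}(A^{1/2})}\to1$) and checking that the single approximating sequence simultaneously respects the $A$-normalization and the limit of the inner products.
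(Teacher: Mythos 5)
Your proof is correct, but it takes a genuinely different route from the paper's own argument for Theorem \ref{main1}. The paper works directly in the semi-Hilbertian space $(\mathcal{H},\langle\cdot\mid\cdot\rangle_A)$, never invoking $\mathbf{R}(A^{1/2})$: for (1)$\Rightarrow$(2) it uses \eqref{seminormold} to pick $A$-unit vectors with $\|Tx_n+\lambda Sx_n\|_A\to\|T\|_A+\|S\|_A$, expands $\|Tx_n+\lambda Sx_n\|_A^2$ and squeezes via the Cauchy--Schwarz inequality and \eqref{ineqwich02}; for (2)$\Rightarrow$(1) it disposes of the degenerate cases $AT=0$ or $AS=0$ first, extracts $\lambda\in\mathbb{T}$ by compactness of the unit circle as a limit of $\langle Tx_n\mid Sx_n\rangle_A/|\langle Tx_n\mid Sx_n\rangle_A|$, deduces $\|Tx_n\|_A\to\|T\|_A$ and $\|Sx_n\|_A\to\|S\|_A$ from Cauchy--Schwarz, and reassembles $\|(T+\lambda S)x_n\|_A^2$ to force $\|T+\lambda S\|_A=\|T\|_A+\|S\|_A$. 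You instead transfer the whole statement to the de Branges--Rovnyak space: your equivalence $T\parallel_A S\iff\widetilde{T}\parallel\widetilde{S}$ is exactly the paper's Lemma \ref{lempar01} (which the paper proves only \emph{after} Theorem \ref{main1}, for use in Theorem \ref{equinew2}), and your density-plus-normalization argument pulling unit vectors of $\mathbf{R}(A^{1/2})$ back to $A$-unit vectors of $\mathcal{H}$ closely mirrors the approximation step in the paper's proof of Lemma \ref{norminraundemi01}. Both arguments are sound, and each buys something: the paper's proof is self-contained and elementary, needing nothing beyond Cauchy--Schwarz in the semi-inner product; yours is shorter modulo the quoted Theorem \ref{paranorm}, handles the degenerate cases uniformly (since that theorem holds for arbitrary bounded operators), and makes explicit the reduction that the paper itself exploits later, e.g.\ in Theorem \ref{finite01}, where precisely your pull-back mechanism (with compactness replacing density) reappears. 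Your care over the two delicate points --- eventual nonvanishing of $A\xi_n$, guaranteed by $\|A\xi_n\|_{\mathbf{R}(A^{1/2})}\to 1$, and the identity $\|x_n\|_A=\|Ax_n\|_{\mathbf{R}(A^{1/2})}=1$ from \eqref{usefuleq01} --- is exactly what legitimizes the translation, and your bilinear continuity bound $\bigl|(\widetilde{T}Ax_n,\widetilde{S}Ax_n)-(\widetilde{T}u_n,\widetilde{S}u_n)\bigr|\leq\|\widetilde{T}\|\,\|\widetilde{S}\|\,(\|Ax_n\|+\|u_n\|)\,\|Ax_n-u_n\|_{\mathbf{R}(A^{1/2})}$ is correct.
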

\begin{proof}
(1)$\Longrightarrow$(2): Assume that $T\parallel_A S$. Then, $\|T+\lambda S\|_A=\|T\|_A+\|S\|_A$ for some $\lambda\in\mathbb{T}$. Moreover, by using \eqref{seminormold}, we infer that
$$\sup\{\|Tx+\lambda Sx\|_A\,;\; x\in\mathcal{H},\|x\|_A=1\}=\|T+\lambda S\|_A=\|T\|_A+\|S\|_A.$$
Hence, there exists a sequence $(x_n)_n\subset\mathcal{H}$ such that $\|x_n\|_A=1$ and
$$\displaystyle\lim_{n\rightarrow\infty} \|Tx_n+\lambda Sx_n\|_A=\|T\|_A+\|S\|_A.$$
 So, by using the Cauchy-Schwarz inequality and \eqref{ineqwich02} we see that
\begin{align*}
\|T\|_A^2+2\|T\|_A\,\|S\|_A+\|S\|_A^2
&=(\|T\|_A+\|S\|_A)^2\\
&=\lim_{n\rightarrow\infty} \|Tx_n+\lambda Sx_n\|_A^2\\
&=\lim_{n\rightarrow\infty} \left[\,\|Tx_n\|_A^2+\langle Tx_n\mid \lambda Sx_n\rangle_A+\langle\lambda Sx_n\mid Tx_n\rangle_A+\|Sx_n\|_A^2\,\right]\\
&\leq\|T\|_A^2+2\lim_{n\rightarrow\infty} |\langle Tx_n\mid Sx_n\rangle_A|+\|S\|_A^2\\
&\leq\|T\|_A^2+2\lim_{n\rightarrow\infty} \|Tx_n\|_A\|Sx_n\|_A+\|S\|_A^2\\
&\leq\|T\|_A^2+2\|T\|_A\,\|S\|_A+\|S\|_A^2.
\end{align*}
Therefore, we get
$$\lim_{n\rightarrow\infty} |\langle Tx_n\mid Sx_n\rangle_A|=\|T\|_A\,\|S\|_A.$$

(2)$\Longrightarrow$(1): Assume that there exists a sequence of $A$-unit vectors $(x_n)_n\subseteq\mathcal{H}$ such that
\begin{equation}\label{zamanicana}
\displaystyle{\lim_{n\rightarrow +\infty}}|{\langle Tx_n, Sx_n\rangle}_{A}| = {\|T\|}_{A}\,{\|S\|}_{A}.
\end{equation}
 Notice that if $AT=0$ or $AS=0$,
then $T\parallel_AS$. Assume that $AT\neq0$ and $AS\neq0$. Since the unit sphere of $\mathbb{C}$ is compact, then by taking a further subsequence we may assume that there is some $\lambda\in\mathbb{T}$ such that
$$\lim_{n\rightarrow\infty}\frac{\langle Tx_{n}\mid Sx_{n}\rangle_A}{|\langle Tx_{n}\mid Sx_{n}\rangle_A|}=\lambda.$$
This in turn implies, by using \eqref{zamanicana}, that
$$\lim_{n\rightarrow\infty}\langle Tx_{n}\mid Sx_{n}\rangle_A=\lim_{n\rightarrow\infty}\frac{\langle Tx_{n}\mid Sx_{n}\rangle_A}{|\langle Tx_{n}\mid Sx_{n}\rangle_A|}|\langle Tx_{n}\mid Sx_{n}\rangle_A|=\lambda\|T\|_A\|S\|_A.$$
Hence, $\displaystyle{\lim_{n\rightarrow +\infty}}{\langle Tx_n\mid \lambda Sx_n\rangle}_{A}= {\|T\|}_{A}\,{\|S\|}_{A}$. So, we deduce that
\begin{equation}\label{limreal}
\displaystyle{\lim_{n\rightarrow +\infty}}\mbox{Re}{\langle Tx_n\mid \lambda Sx_n\rangle}_{A}= {\|T\|}_{A}\,{\|S\|}_{A}.
\end{equation}
Moreover, by using the Cauchy-Schwarz inequality it follows from
$${\|T\|}_{A}\,{\|S\|}_{A}=\displaystyle{\lim_{n\rightarrow +\infty}}|{\langle Tx_n, Sx_n\rangle}_{A}|\leq \displaystyle{\lim_{n\rightarrow +\infty}}\|Tx_n\|_A\,\|S\|_A\leq {\|T\|}_{A}\,{\|S\|}_{A}$$
that $\displaystyle{\lim_{n\rightarrow +\infty}}\|Tx_n\|_A={\|T\|}_{A}$. In addition, by a similar argument, we get $\displaystyle{\lim_{n\rightarrow +\infty}}{\|Sx_n\|}_{A} = {\|S\|}_{A}$. So, by using \eqref{limreal}, we infer that
\begin{align*}
{\|T\|}_{A} +{\|S\|}_{A}
&\geq {\|T+\lambda S\|}_{A}\\
&\geq \left(\displaystyle{\lim_{n\rightarrow +\infty}}{\|(T+\lambda S)x_n\|}_{A}^2\right)^{1/2}\\
&\geq \left(\displaystyle{\lim_{n\rightarrow +\infty}}\left[{\|Tx_n\|}_{A}^2+2\mbox{Re}{\langle Tx_n\mid \lambda Sx_n\rangle}_{A}+{\|Sx_n\|}_{A}^2\right]\right)^{1/2}\\
& \leq \left({\|T\|}^2_{A} + 2{\|S\|}_{A}{\|T\|}_{A} + {\|S\|}^2_{A}\right)^{1/2}= {\|T\|}_{A} + {\|S\|}_{A}.
\end{align*}
Hence, we obtain ${\|T +\lambda S\|}_{A} = {\|T\|}_{A} + {\|S\|}_{A}$ and so $T\parallel_AS$.
\end{proof}

In order to prove one of our main results in this section, we need the following lemma which describes the connection between the $A$-seminorm parallelism of $A$-bounded operators and the norm-parallelism of operators in $\mathcal{B}(\mathbf{R}(A^{1/2}))$. Recall that an operator $T\in \mathcal{B}_{A^{1/2}}(\mathcal{H})$ if and only if there exists a unique $\widetilde{T}\in \mathcal{B}(\mathbf{R}(A^{1/2}))$ such that $Z_AT =\widetilde{T}Z_A$ (see Proposition \ref{prop_arias}).

\begin{lemma}\label{lempar01}
Let $T,S\in \mathcal{B}_{A^{1/2}}(\mathcal{H})$. Then, $T\parallel_AS$ if and only if $\widetilde{T}\parallel \widetilde{S}$.
\end{lemma}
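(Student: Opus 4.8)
The plan is to exploit the fact that the correspondence $T \mapsto \widetilde{T}$ furnished by Proposition \ref{prop_arias} is a linear isometry from $\big(\mathcal{B}_{A^{1/2}}(\mathcal{H}),\|\cdot\|_A\big)$ into $\mathcal{B}(\mathbf{R}(A^{1/2}))$, so that the defining equality of $A$-seminorm-parallelism transfers verbatim (with the \emph{same} $\lambda$) to an equality of ordinary norms on $\mathbf{R}(A^{1/2})$. Since both notions of parallelism are phrased as the existence of some $\lambda\in\mathbb{T}$ making a norm-additivity equation hold, it suffices to check that for each fixed $\lambda\in\mathbb{T}$ the two equations are equivalent.

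First I would establish linearity of the map $T\mapsto \widetilde{T}$. Given $T,S\in\mathcal{B}_{A^{1/2}}(\mathcal{H})$ and $\lambda\in\mathbb{C}$, the operator $T+\lambda S$ again lies in $\mathcal{B}_{A^{1/2}}(\mathcal{H})$ (a subspace of $\mathcal{B}(\mathcal{H})$), so it admits its associated $\widetilde{T+\lambda S}$. Using $Z_AT=\widetilde{T}Z_A$ and $Z_AS=\widetilde{S}Z_A$ one computes
\begin{equation*}
Z_A(T+\lambda S)=Z_AT+\lambda Z_AS=\widetilde{T}Z_A+\lambda\widetilde{S}Z_A=(\widetilde{T}+\lambda\widetilde{S})Z_A,
\end{equation*}
and the \emph{uniqueness} clause of Proposition \ref{prop_arias} forces $\widetilde{T+\lambda S}=\widetilde{T}+\lambda\widetilde{S}$. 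This identity is the only nontrivial ingredient, and it is where the density of $\mathcal{R}(A)$ in $\mathbf{R}(A^{1/2})$ (which underlies the uniqueness of the tilde operator) is really used; I expect this to be the main, though modest, obstacle, since everything else is formal.

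Next I would invoke Lemma \ref{imporeq2009}(1), namely $\|X\|_A=\|\widetilde{X}\|_{\mathcal{B}(\mathbf{R}(A^{1/2}))}$ for every $X\in\mathcal{B}_{A^{1/2}}(\mathcal{H})$. Applying this to $X=T$, $X=S$, and $X=T+\lambda S$, and combining with the linearity identity just proved, for any $\lambda\in\mathbb{T}$ one gets
\begin{equation*}
\|T+\lambda S\|_A=\|\widetilde{T}+\lambda\widetilde{S}\|_{\mathcal{B}(\mathbf{R}(A^{1/2}))}
\qquad\text{and}\qquad
\|T\|_A+\|S\|_A=\|\widetilde{T}\|_{\mathcal{B}(\mathbf{R}(A^{1/2}))}+\|\widetilde{S}\|_{\mathcal{B}(\mathbf{R}(A^{1/2}))}.
\end{equation*}

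Finally I would conclude by chaining these equalities. For a fixed $\lambda\in\mathbb{T}$, the relation $\|T+\lambda S\|_A=\|T\|_A+\|S\|_A$ holds if and only if $\|\widetilde{T}+\lambda\widetilde{S}\|_{\mathcal{B}(\mathbf{R}(A^{1/2}))}=\|\widetilde{T}\|_{\mathcal{B}(\mathbf{R}(A^{1/2}))}+\|\widetilde{S}\|_{\mathcal{B}(\mathbf{R}(A^{1/2}))}$. Hence such a $\lambda$ exists for the pair $(T,S)$ precisely when it exists for the pair $(\widetilde{T},\widetilde{S})$, which by definition means $T\parallel_A S$ if and only if $\widetilde{T}\parallel\widetilde{S}$, completing the proof.
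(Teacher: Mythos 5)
Your proposal is correct and follows essentially the same route as the paper's own proof: both establish the linearity identity $\widetilde{T+\lambda S}=\widetilde{T}+\lambda\widetilde{S}$ from the intertwining relation $Z_AX=\widetilde{X}Z_A$ together with the uniqueness clause of Proposition \ref{prop_arias}, and then transfer the norm-additivity equation via the isometry $\|X\|_A=\|\widetilde{X}\|_{\mathcal{B}(\mathbf{R}(A^{1/2}))}$ of Lemma \ref{imporeq2009}. Your write-up is in fact slightly cleaner, since you explicitly note that $T+\lambda S\in\mathcal{B}_{A^{1/2}}(\mathcal{H})$ and that the \emph{same} $\lambda\in\mathbb{T}$ witnesses parallelism on both sides, points the paper leaves implicit.
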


\begin{proof}
We shall prove that $\|\widetilde{T+\lambda S}\|_{\mathcal{B}(\mathbf{R}(A^{1/2}))}=\|\widetilde{T}+\lambda \widetilde{S}\|_{\mathcal{B}(\mathbf{R}(A^{1/2}))}$ for any $\lambda\in \mathbb{C}$. Since, $T,S\in \mathcal{B}_{A^{1/2}}(\mathcal{H})$, then by Proposition \ref{prop_arias} there exists a unique $\widetilde{T},\widetilde{S}\in \mathcal{B}(\mathbf{R}(A^{1/2}))$ such that $Z_AT =\widetilde{T}Z_A$ and $Z_AS=\widetilde{S}Z_A$. So, $Z_A(T+\lambda S) =(\widetilde{T}+\lambda \widetilde{T})Z_A$ for any $\lambda\in \mathbb{C}$. On the other hand, $Z_A(T+\lambda S) =(\widetilde{T+\lambda S})Z_A$. Hence, $\widetilde{T+\lambda S}=\widetilde{T}+\lambda \widetilde{T}$ for all $\lambda\in \mathbb{C}$. This implies that $\|\widetilde{T+\lambda S}\|_{\mathcal{B}(\mathbf{R}(A^{1/2}))}=\|\widetilde{T}+\lambda \widetilde{S}\|_{\mathcal{B}(\mathbf{R}(A^{1/2}))}$ for any $\lambda\in \mathbb{C}$. On the other hand, by Lemma \ref{imporeq2009} we have $\|X\|_A=\|\widetilde{X}\|_{\mathcal{B}(\mathbf{R}(A^{1/2}))}$ for every $X\in \mathcal{B}_{A^{1/2}}(\mathcal{H})$. Hence, the proof is complete.
\end{proof}

We are now in a position to establish one of our main results in this section which gives a connection between the $A$-seminorm-parallelism to the identity operator and an equality condition for the $A$-Davis-Wielandt radius of $A$-bounded operators. Our result generalizes Theorems \ref{thm001}.
\begin{theorem}\label{equinew2}
Let $T\in \mathcal{B}_{A^{1/2}}(\mathcal{H})$. Then, the following assertions are equivalent:
\begin{itemize}
\item [(1)] $T\parallel_A I$.
\item [(2)] $d\omega_A(T)=\sqrt{\omega_A(T)^2+\|T\|_A^4}$.
\item [(3)] $T$ is $A$-normaloid.
\item [(4)] $d\omega_A(T)=\|T\|_A\sqrt{1+\|T\|_A^2}$.
\end{itemize}
\end{theorem}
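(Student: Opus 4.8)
The plan is to reduce every statement to the genuine Hilbert space $\mathbf{R}(A^{1/2})$ and then assemble the four implications from results already at our disposal. The backbone is the equivalence of (2) and (3), which is exactly Theorem \ref{equinew}, so the real work is to hook (1) and (4) onto these.

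First I would establish (1)$\Leftrightarrow$(2). The key observation is that $\widetilde{I}=I_{\mathbf{R}(A^{1/2})}$: indeed $Z_A I=Z_A=\widetilde{I}Z_A$, and since $\mathcal{R}(A)$ is dense in $\mathbf{R}(A^{1/2})$, the uniqueness in Proposition \ref{prop_arias} forces $\widetilde{I}$ to be the identity. By Lemma \ref{lempar01}, the relation $T\parallel_A I$ is then equivalent to $\widetilde{T}\parallel I$ in $\mathcal{B}(\mathbf{R}(A^{1/2}))$. Applying the classical characterization Theorem \ref{thm001} to the operator $\widetilde{T}$ on the Hilbert space $\mathbf{R}(A^{1/2})$ gives $\widetilde{T}\parallel I$ if and only if $d\omega(\widetilde{T})=\sqrt{\omega(\widetilde{T})^2+\|\widetilde{T}\|_{\mathcal{B}(\mathbf{R}(A^{1/2}))}^4}$. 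Translating this back through Lemma \ref{imporeq2009} (which yields $\|T\|_A=\|\widetilde{T}\|_{\mathcal{B}(\mathbf{R}(A^{1/2}))}$ and $\omega_A(T)=\omega(\widetilde{T})$) and Lemma \ref{norminraundemi} (which yields $d\omega_A(T)=d\omega(\widetilde{T})$) converts the right-hand equality precisely into assertion (2), completing (1)$\Leftrightarrow$(2).

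Next I would obtain (3)$\Leftrightarrow$(4) by a short direct computation. If $T$ is $A$-normaloid, then $\omega_A(T)=\|T\|_A$ by Proposition \ref{charactenormailoid}; substituting this into the equality in (2), which is available through the equivalence (2)$\Leftrightarrow$(3) of Theorem \ref{equinew}, gives
\[
d\omega_A(T)=\sqrt{\|T\|_A^2+\|T\|_A^4}=\|T\|_A\sqrt{1+\|T\|_A^2},
\]
which is (4). Conversely, assuming (4), I would invoke the upper estimate $d\omega_A(T)\leq\sqrt{\omega_A(T)^2+\|T\|_A^4}$ from Proposition \ref{propositionsum}(5). Squaring the inequality $\|T\|_A\sqrt{1+\|T\|_A^2}\leq\sqrt{\omega_A(T)^2+\|T\|_A^4}$ and cancelling $\|T\|_A^4$ leaves $\|T\|_A^2\leq\omega_A(T)^2$, hence $\|T\|_A\leq\omega_A(T)$; combined with the always-valid reverse inequality $\omega_A(T)\leq\|T\|_A$ this forces $\omega_A(T)=\|T\|_A$, so $T$ is $A$-normaloid and (4)$\Rightarrow$(3) holds.

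The proof is essentially an assembly of earlier results, so there is no single hard step; the only point requiring care is verifying $\widetilde{I}=I_{\mathbf{R}(A^{1/2})}$ and correctly applying the classical Theorem \ref{thm001} on the honest Hilbert space $\mathbf{R}(A^{1/2})$ (the completion that rescues us from the possible non-injectivity or non-closed range of $A$). Once (1)$\Leftrightarrow$(2) is established and (2)$\Leftrightarrow$(3) is quoted from Theorem \ref{equinew}, the chain (1)$\Leftrightarrow$(2)$\Leftrightarrow$(3)$\Leftrightarrow$(4) closes the equivalence.
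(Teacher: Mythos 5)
Your proposal is correct and follows essentially the same route as the paper: reduce (1)$\Leftrightarrow$(2) to Theorem \ref{thm001} on $\mathbf{R}(A^{1/2})$ via $\widetilde{I}=I_{\mathbf{R}(A^{1/2})}$, Lemma \ref{lempar01}, Lemma \ref{imporeq2009} and Lemma \ref{norminraundemi}, quote Theorem \ref{equinew} for (2)$\Leftrightarrow$(3), and close the loop with the sandwich argument based on Proposition \ref{propositionsum}(5) and $\omega_A(T)\leq\|T\|_A$. The only cosmetic difference is that you organize the last step as (3)$\Leftrightarrow$(4) while the paper proves (2)$\Leftrightarrow$(4), but the algebra and the invoked results (including Proposition \ref{charactenormailoid}) are identical.
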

\begin{proof}
$(1)\Leftrightarrow(2):$ Observe first that $\widetilde{I}=I_{\mathbf{R}(A^{1/2})}$. Moreover, by using Lemma \ref{norminraundemi} together with Theorem \ref{thm001}, we infer that
\begin{align*}
T\parallel_A I
&\Leftrightarrow \widetilde{T}\parallel I_{\mathbf{R}(A^{1/2})} \\
 &\Leftrightarrow d\omega(\widetilde{T})=\sqrt{\omega(\widetilde{T})^2+\|T\|_{\mathcal{B}(\mathbf{R}(A^{1/2}))}^4}\\
 &\Leftrightarrow d\omega_A(T)=\sqrt{\omega_A(T)^2+\|T\|_A^4}.
\end{align*}
$(2)\Leftrightarrow(3):$ Follows from Theorem \ref{equinew}.

 $(2)\Leftrightarrow(4):$ Assume that $d\omega_A(T)=\sqrt{\omega_A(T)^2+\|T\|_A^4}$. By using the equivalence $(2)\Leftrightarrow(3)$, we have $T$ is $A$-normaloid and then by Proposition \ref{charactenormailoid}, we deduce that $\omega_A(T)=\|T\|_A$. So, we get the assertion $(4)$ as required. Now, assume that $(4)$ holds. Thanks to \cite[Proposition 2.5.]{bakfeki01}, we have $\omega_A(T)\leq \|T\|_A$. This allows us to deduce, by using also the assertion $(5)$ of Proposition \ref{propositionsum}, that
 $$\|T\|_A\sqrt{1+\|T\|_A^2}=d\omega_A(T)\leq \sqrt{\omega_A(T)^2+\|T\|_A^4}\leq \|T\|_A\sqrt{1+\|T\|_A^2}.$$
 Hence, $d\omega_A(T)=\sqrt{\omega_A(T)^2+\|T\|_A^4}$.
\end{proof}

In the following corollary, we investigate the case when an operator $T\in \mathcal{B}_{A}(\mathcal{H})$ is parallel to the identity operator.
\begin{corollary}
Let $T\in \mathcal{B}_{A}(\mathcal{H})$. Then, the following assertions are equivalent:
\begin{itemize}
  \item [(1)] $T\parallel_A I$.
  \item [(2)] $ \omega_A(T)^2I\geq_A T^\sharp T$ that is $\langle A(\omega_A(T)^2I-T^\sharp T)x\mid x \rangle\geq 0,\;\forall\,x\in \mathcal{H}$.
\end{itemize}
\end{corollary}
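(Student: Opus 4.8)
The plan is to reduce the whole statement to the characterization of $A$-normaloid operators already in hand. By Theorem \ref{equinew2}, the relation $T\parallel_A I$ is equivalent to $T$ being $A$-normaloid, and by Proposition \ref{charactenormailoid} the latter means precisely $\omega_A(T)=\|T\|_A$. Thus it suffices to prove that assertion $(2)$ is itself equivalent to the equality $\omega_A(T)=\|T\|_A$, after which the corollary follows by concatenating equivalences.

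The key computation is to rewrite the positivity condition in $(2)$ in terms of the seminorm. For $T\in\mathcal{B}_A(\mathcal{H})$ and any $x\in\mathcal{H}$, the fact that $T^\sharp$ is the $A$-adjoint of $T$ gives $\langle AT^\sharp Tx\mid x\rangle=\langle T^\sharp Tx\mid x\rangle_A=\langle Tx\mid Tx\rangle_A=\|Tx\|_A^2$, while $\langle A\,\omega_A(T)^2x\mid x\rangle=\omega_A(T)^2\|x\|_A^2$. Hence assertion $(2)$, namely $\langle A(\omega_A(T)^2I-T^\sharp T)x\mid x\rangle\geq 0$ for all $x$, is exactly the pointwise inequality $\|Tx\|_A^2\leq\omega_A(T)^2\|x\|_A^2$ for every $x\in\mathcal{H}$.

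It then remains to identify this pointwise inequality with $\|T\|_A\leq\omega_A(T)$. Taking the supremum over all $A$-unit vectors and using \eqref{seminormold} gives $\|T\|_A^2\leq\omega_A(T)^2$; conversely, if $\|T\|_A\leq\omega_A(T)$, then \eqref{ineqwich02} yields $\|Tx\|_A\leq\|T\|_A\|x\|_A\leq\omega_A(T)\|x\|_A$ for every $x$, which is the desired bound. Finally, since $\omega_A(T)\leq\|T\|_A$ always holds by \cite[Proposition 2.5.]{bakfeki01}, the condition $\|T\|_A\leq\omega_A(T)$ forces $\omega_A(T)=\|T\|_A$. Chaining these: $(2)$ $\Leftrightarrow$ $\|Tx\|_A^2\leq\omega_A(T)^2\|x\|_A^2$ for all $x$ $\Leftrightarrow$ $\|T\|_A\leq\omega_A(T)$ $\Leftrightarrow$ $\omega_A(T)=\|T\|_A$ $\Leftrightarrow$ $T$ is $A$-normaloid $\Leftrightarrow$ $(1)$.

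The only step demanding genuine care is the identity $\langle AT^\sharp Tx\mid x\rangle=\|Tx\|_A^2$, which rests on the defining relation $AT^\sharp=T^*A$ of the $A$-adjoint (so that $T^\sharp$ really behaves as an adjoint with respect to $\langle\cdot\mid\cdot\rangle_A$); everything else is the routine passage between a pointwise seminorm inequality and the operator seminorm together with the already-established universal bound $\omega_A(T)\leq\|T\|_A$. Note also that membership $T\in\mathcal{B}_A(\mathcal{H})\subset\mathcal{B}_{A^{1/2}}(\mathcal{H})$ guarantees $T(\mathcal{N}(A))\subset\mathcal{N}(A)$, so that $\omega_A(T)$ is finite and the supremum argument is legitimate.
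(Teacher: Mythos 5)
Your proof is correct and follows essentially the same route as the paper's: both reduce assertion $(2)$ to the pointwise inequality $\|Tx\|_A^2\leq\omega_A(T)^2\|x\|_A^2$ via the identity $\langle AT^\sharp Tx\mid x\rangle=\|Tx\|_A^2$, identify this with $\omega_A(T)=\|T\|_A$ using the universal bound $\omega_A(T)\leq\|T\|_A$, and then conclude through Proposition \ref{charactenormailoid} and Theorem \ref{equinew2}. If anything, you spell out more explicitly the supremum argument (passing between the pointwise bound and $\|T\|_A\leq\omega_A(T)$) that the paper's chain of equivalences leaves implicit.
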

\begin{proof}
One can observe that
\begin{align*}
\omega_A(T)=\|T\|_A
&\Leftrightarrow  \|Tx\|_A\leq \omega_A(T)\|x\|_A,\;\forall\,x\in \mathcal{H}\\
 &\Leftrightarrow  \|Tx\|_A^2\leq \omega_A(T)^2\|x\|_A^2,\;\forall\,x\in \mathcal{H}\\
  &\Leftrightarrow  \langle Tx\mid Tx \rangle_A\leq \langle \omega_A(T)^2x\mid x \rangle_A,\;\forall\,x\in \mathcal{H}\\
    &\Leftrightarrow  \langle A(T^\sharp T-\omega_A(T)^2I)x\mid x \rangle\leq 0,\;\forall\,x\in \mathcal{H}.
\end{align*}
On the other hand, by Proposition \ref{charactenormailoid}, $T$ is $A$-normaloid if and only if $\omega_A(T)=\|T\|_A$. Hence the proof is complete by using Theorem \ref{equinew2}.
\end{proof}

Now, we aim to study the case when $\mathcal{H}$ be a finite dimensional Hilbert space. We first state the following theorem.
\begin{theorem}\label{finite01}
Let $\mathcal{H}$ be a finite dimensional Hilbert space and $T,S\in\mathcal{B}_{A^{1/2}}(\mathcal{H})$. Then the following conditions are equivalent:
\begin{itemize}
\item[(1)] $T\parallel_AS$.
\item[(2)] There exists an $A$-unit vector $x\in\mathcal{H}$ such that
\begin{align*}
|\langle T x\mid Sx\rangle_A|=\|T\|_A\|S\|_A.
\end{align*}
\end{itemize}
\end{theorem}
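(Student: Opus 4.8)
The plan is to reduce the statement to the corresponding finite-dimensional characterization of norm-parallelism for ordinary Hilbert space operators, exactly as Lemma \ref{lempar01} and Lemma \ref{imporeq2009} allow. The key structural fact is the dictionary $T \mapsto \widetilde{T}$ between $\mathcal{B}_{A^{1/2}}(\mathcal{H})$ and $\mathcal{B}(\mathbf{R}(A^{1/2}))$, under which $\|T\|_A = \|\widetilde{T}\|_{\mathcal{B}(\mathbf{R}(A^{1/2}))}$ and $T\parallel_A S \Leftrightarrow \widetilde{T}\parallel\widetilde{S}$. The only subtlety compared with Theorem \ref{main1} is that here we want an actual attaining $A$-unit vector $x$ rather than a sequence, and this is where finite-dimensionality enters.

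First I would establish the direction $(2)\Rightarrow(1)$, which holds with no dimensional hypothesis: if $x$ is an $A$-unit vector with $|\langle Tx\mid Sx\rangle_A| = \|T\|_A\|S\|_A$, then the constant sequence $x_n = x$ satisfies condition $(2)$ of Theorem \ref{main1}, and hence $T\parallel_A S$. For $(1)\Rightarrow(2)$, I would apply Theorem \ref{main1} to extract a sequence $(x_n)_n$ of $A$-unit vectors with $\lim_n |\langle Tx_n\mid Sx_n\rangle_A| = \|T\|_A\|S\|_A$, and then I need to upgrade this limit to a maximum attained at a single vector. The natural move is to pass to the quotient/completion picture: via \eqref{usefuleq01} the $A$-unit sphere of $\mathcal{H}$ corresponds to the unit sphere of $\mathbf{R}(A^{1/2})$, and the functional $x\mapsto |\langle Tx\mid Sx\rangle_A| = |(\widetilde{T}Ax,\widetilde{S}Ax)|$ is continuous there.

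The main obstacle is therefore a compactness argument, and handling it cleanly is the crux. The cleanest route is to reduce to $\overline{\mathcal{R}(A)}$ as in the proof of Theorem \ref{compactnew}: since $T,S$ are $A$-bounded they leave $\mathcal{N}(A)$ invariant, so by the decomposition $\mathcal{H}=\mathcal{N}(A)\oplus\overline{\mathcal{R}(A)}$ one may replace $T,S$ by $T_0=P_AT|_{\overline{\mathcal{R}(A)}}$ and $S_0=P_AS|_{\overline{\mathcal{R}(A)}}$ without changing the relevant $A$-inner products or $A$-seminorms. On the finite-dimensional space $\overline{\mathcal{R}(A)}$ the operator $A_0=A|_{\overline{\mathcal{R}(A)}}$ is injective, so by Lemma \ref{lemmhomo} the $A_0$-unit sphere $S^{A_0}(0,1)$ is homeomorphic to the ordinary unit sphere of the finite-dimensional space $\overline{\mathcal{R}(A)}$, hence compact. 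The map $x\mapsto |\langle T_0 x\mid S_0 x\rangle_{A_0}|$ is continuous on this compact set and its supremum equals $\|T\|_A\|S\|_A$ (the supremum realized by the sequence above), so it is attained at some $A$-unit vector $x$, giving $(2)$.

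An equivalent and perhaps slicker formulation is to argue entirely in $\mathbf{R}(A^{1/2})$: when $\mathcal{H}$ is finite dimensional so is $\mathbf{R}(A^{1/2})$, so by Lemma \ref{lempar01} we have $\widetilde{T}\parallel\widetilde{S}$ and, invoking the finite-dimensional characterization of norm-parallelism in $\mathcal{B}(\mathbf{R}(A^{1/2}))$ from \cite{Z.M.2}, there is a genuine unit vector $u\in\mathbf{R}(A^{1/2})$ with $|(\widetilde{T}u,\widetilde{S}u)| = \|\widetilde{T}\|\,\|\widetilde{S}\|$. Since $\mathcal{R}(A)$ is dense in $\mathbf{R}(A^{1/2})$ and the sphere is compact, one can take $u = Ax = Z_A x$ for a suitable $x$, and then translate back through $\|Ax\|_{\mathbf{R}(A^{1/2})}=\|x\|_A$ and $(\widetilde{T}Ax,\widetilde{S}Ax)=\langle Tx\mid Sx\rangle_A$ to obtain the attaining $A$-unit vector in $\mathcal{H}$. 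I would choose the first route, since the reduction to $\overline{\mathcal{R}(A)}$ together with Lemma \ref{lemmhomo} is already set up in the paper and avoids needing the density argument to land exactly on a vector of the form $Ax$.
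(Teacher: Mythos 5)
Your proof is correct, but the main route you choose is genuinely different from the paper's. The paper proves $(1)\Rightarrow(2)$ entirely inside the de Branges--Rovnyak space: from $T\parallel_A S$ it gets $\widetilde{T}\parallel\widetilde{S}$ by Lemma \ref{lempar01}, extracts a sequence of unit vectors $y_n\in\mathbf{R}(A^{1/2})$ with $|(\widetilde{T}y_n,\widetilde{S}y_n)|\to\|\widetilde{T}\|_{\mathcal{B}(\mathbf{R}(A^{1/2}))}\|\widetilde{S}\|_{\mathcal{B}(\mathbf{R}(A^{1/2}))}$, uses compactness of the unit sphere of the finite-dimensional space $\mathbf{R}(A^{1/2})$ to pass to a limit $y$, and then---this is the one step your sketch of the ``slicker'' second route slightly misstates---invokes the finite-dimensional identity $\mathcal{R}(A)=\mathcal{R}(A^{1/2})$ to write $y=Ax$; the correct justification is not ``density plus compactness of the sphere'' but simply that a dense linear subspace of a finite-dimensional space is closed and hence equals the whole space. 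Your preferred first route instead stays in $\mathcal{H}$: using $T(\mathcal{N}(A))\subset\mathcal{N}(A)$, $S(\mathcal{N}(A))\subset\mathcal{N}(A)$ and $\mathcal{H}=\mathcal{N}(A)\oplus\overline{\mathcal{R}(A)}$, you compress to $T_0=P_AT|_{\overline{\mathcal{R}(A)}}$ and $S_0=P_AS|_{\overline{\mathcal{R}(A)}}$, check that $\langle Tx\mid Sx\rangle_A=\langle T_0x_2\mid S_0x_2\rangle_{A_0}$ and that the $A$-seminorms are unchanged, and obtain compactness of $S^{A_0}(0,1)$ from Lemma \ref{lemmhomo}, since $A_0$ is injective on the finite-dimensional space $\overline{\mathcal{R}(A)}$; continuity then gives an attaining vector, where the needed upper bound $|\langle T_0x\mid S_0x\rangle_{A_0}|\leq\|T\|_A\|S\|_A$ on the sphere is immediate from the Cauchy--Schwarz inequality and \eqref{ineqwich02}. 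Both arguments are sound; the paper's has the advantage of uniformity with the rest of Section \ref{s3}, where everything is funneled through the dictionary $T\mapsto\widetilde{T}$, while yours reuses the reduction already set up for Theorems \ref{mainold} and \ref{compactnew}, avoids having to identify the limiting unit vector of $\mathbf{R}(A^{1/2})$ as an element of $\mathcal{R}(A)$, and produces the attaining $A$-unit vector directly in $\mathcal{H}$.
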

\begin{proof}
(2)$\Longrightarrow$(1) Follows immediately by applying Theorem \ref{main1}.

(1)$\Longrightarrow$(2) Assume that $T\parallel_AS$. Then, by Lemma \ref{lempar01} we have $\widetilde{T}\parallel \widetilde{S}$. So, by Theorem \ref{main1} there exists a sequence $(y_n)_n\subset\mathbf{R}(A^{1/2})$ such that $\|y_n\|_{\mathbf{R}(A^{1/2})}=1$,
    \begin{equation}\label{marj}
    \lim_{n\to \infty}|(\widetilde{T}y_n, \widetilde{S}y_n)|= \|\widetilde{T}\|_{\mathcal{B}(\mathbf{R}(A^{1/2}))}\,\|\widetilde{S}\|_{\mathcal{B}(\mathbf{R}(A^{1/2}))}.
    \end{equation}
On the other hand, since $\mathcal{H}$ be a finite dimensional Hilbert space, then the Hilbert space $\mathbf{R}(A^{1/2}))$ is also finite dimensional. So, the closed unit sphere of $\mathbf{R}(A^{1/2}))$ is compact. Therefore, $(y_n)_n$ has a subsequence $(y_{n_k})_k$ which converges to some $y\in \mathbf{R}(A^{1/2}))$ with $\|y\|_{\mathbf{R}(A^{1/2})}=1$. Hence, by using \eqref{marj}, we get
$$|(\widetilde{T}y, \widetilde{S}y)|=\lim_{k\to \infty}|(\widetilde{T}y_{n_k}, \widetilde{S}y_{n_k})|= \|\widetilde{T}\|_{\mathcal{B}(\mathbf{R}(A^{1/2}))}\,\|\widetilde{S}\|_{\mathcal{B}(\mathbf{R}(A^{1/2}))}.$$
Now, since $\mathcal{H}$ be a finite dimensional, then it can be seen that $\mathcal{R}(A)=\mathcal{R}(A^{1/2})$. Thus, there exists $x\in\mathcal{H}$ such that $y=Ax$. This implies that $\|x\|_A=\|Ax\|_{\mathbf{R}(A^{1/2})}=1$ and
$$|(\widetilde{T}Ax, \widetilde{S}Ax)|=|(ATx, ASx)|=\|\widetilde{T}\|_{\mathcal{B}(\mathbf{R}(A^{1/2}))}\,\|\widetilde{S}\|_{\mathcal{B}(\mathbf{R}(A^{1/2}))}.$$
Therefore, we get the desired result by applying Lemma \ref{imporeq2009} together with \eqref{usefuleq001}.
\end{proof}
Now, we are able to prove the following theorem.
\begin{theorem}\label{Cr.3.10}
Let $\mathcal{H}$ be a finite dimensional Hilbert space and $T\in \mathcal{B}_{A^{1/2}}(\mathcal{H})$. The following statements are equivalent:
\begin{itemize}
\item[(i)] $d\omega_A(T) = \sqrt{\omega_A^2(T) + \|T\|_A^4}$.
\item[(ii)] There exists an $A$-unit vector $x\in \mathcal{H}$ such that $|\langle Tx\mid x\rangle_A| = \|T\|_A$.
\end{itemize}
\end{theorem}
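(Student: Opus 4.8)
The plan is to reduce the whole equivalence to the single scalar identity $\omega_A(T)=\|T\|_A$. Indeed, Theorem~\ref{equinew2} tells us that assertion (i), namely $d\omega_A(T)=\sqrt{\omega_A(T)^2+\|T\|_A^4}$, is equivalent to $T$ being $A$-normaloid, and Proposition~\ref{charactenormailoid} identifies $A$-normaloidness with the equality $\omega_A(T)=\|T\|_A$. Thus it suffices to prove that, in the finite dimensional setting, $\omega_A(T)=\|T\|_A$ holds if and only if the supremum defining $\omega_A(T)$ is realized at some $A$-unit vector whose value equals $\|T\|_A$, i.e. assertion (ii).

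For the implication (ii)$\Rightarrow$(i), I would invoke the inequality $\omega_A(T)\leq\|T\|_A$ from \cite[Proposition 2.5.]{bakfeki01}. If $x$ is an $A$-unit vector with $|\langle Tx\mid x\rangle_A|=\|T\|_A$, then
$$\|T\|_A=|\langle Tx\mid x\rangle_A|\leq \omega_A(T)\leq \|T\|_A,$$
which forces $\omega_A(T)=\|T\|_A$, and hence (i) by the reduction above.

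For the converse (i)$\Rightarrow$(ii), assume $\omega_A(T)=\|T\|_A$. The essential point is to attain the supremum. I would pass to the operator $\widetilde{T}\in\mathcal{B}(\mathbf{R}(A^{1/2}))$ given by Proposition~\ref{prop_arias}; since $\mathcal{H}$ is finite dimensional, so is $\mathbf{R}(A^{1/2})$, whence its closed unit sphere is compact and the continuous map $y\mapsto |(\widetilde{T}y,y)|$ attains its maximum $\omega(\widetilde{T})$ at some unit vector $y\in\mathbf{R}(A^{1/2})$. Because $\mathcal{H}$ is finite dimensional one has $\mathcal{R}(A)=\mathcal{R}(A^{1/2})$, so $y=Ax$ for some $x\in\mathcal{H}$; by \eqref{usefuleq01} this gives $\|x\|_A=\|Ax\|_{\mathbf{R}(A^{1/2})}=1$. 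Using $\widetilde{T}Ax=ATx$ together with \eqref{usefuleq001} one obtains $(\widetilde{T}y,y)=(ATx,Ax)=\langle Tx\mid x\rangle_A$, so that
$$|\langle Tx\mid x\rangle_A|=\omega(\widetilde{T})=\omega_A(T)=\|T\|_A,$$
where the middle equality is Lemma~\ref{imporeq2009}(2). This produces the required $A$-unit vector and establishes (ii).

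The main obstacle is precisely this attainment step: when $A$ is non-injective the $A$-unit sphere of $\mathcal{H}$ fails to be compact in the ambient norm, so one cannot directly maximize $x\mapsto|\langle Tx\mid x\rangle_A|$ on $\mathcal{H}$. Routing the argument through the genuine Hilbert space $\mathbf{R}(A^{1/2})$, where compactness is available, and then pulling the maximizer back via the finite dimensional identity $\mathcal{R}(A)=\mathcal{R}(A^{1/2})$ — exactly as in the proof of Theorem~\ref{finite01} — circumvents this. The only delicate bookkeeping is checking that the pulled-back vector is genuinely $A$-normalized and realizes the correct scalar value, which is handled by \eqref{usefuleq01} and \eqref{usefuleq001}.
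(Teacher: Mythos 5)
Your proof is correct. It is, however, a mild variant of the paper's argument rather than a reproduction of it: the paper disposes of this theorem in one line by combining Theorem \ref{equinew2} with Theorem \ref{finite01} applied to $S=I$ — since $\|I\|_A=1$, Theorem \ref{finite01} says that $T\parallel_A I$ holds precisely when some $A$-unit vector $x$ satisfies $|\langle Tx\mid x\rangle_A|=\|T\|_A$, and Theorem \ref{equinew2} identifies $T\parallel_A I$ with assertion (i). You instead pivot through the normaloid leg of Theorem \ref{equinew2}, reducing (i) to $\omega_A(T)=\|T\|_A$ via Proposition \ref{charactenormailoid}, and then re-prove the attainment step from scratch: compactness of the unit sphere of the finite-dimensional Hilbert space $\mathbf{R}(A^{1/2})$, pullback of the maximizer through $\mathcal{R}(A)=\mathcal{R}(A^{1/2})$, and the bookkeeping identities \eqref{usefuleq01}, \eqref{usefuleq001} together with Lemma \ref{imporeq2009}. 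This is exactly the machinery inside the proof of Theorem \ref{finite01}, inlined for the functional $x\mapsto|\langle Tx\mid x\rangle_A|$ instead of cited for $|\langle Tx\mid Sx\rangle_A|$ with $S=I$, so the two routes rest on the same compactness idea and your steps all check out (in particular $\widetilde{T}Ax=ATx$ and $(ATx,Ax)=\langle Tx\mid x\rangle_A$ are used correctly). What your route buys: the direction (ii)$\Rightarrow$(i) becomes the elementary sandwich $\|T\|_A=|\langle Tx\mid x\rangle_A|\le\omega_A(T)\le\|T\|_A$ with no parallelism needed at all, and your direction (i)$\Rightarrow$(ii) establishes as a byproduct that in finite dimensions the $A$-numerical radius of an $A$-bounded operator is always attained at an $A$-unit vector, a fact the paper leaves implicit (it is essentially the omitted corollary following Theorem \ref{main2} specialized to $S=I$). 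What the paper's route buys is brevity, since Theorem \ref{finite01} already packages the attainment argument once and for all.
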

\begin{proof}
This follows immediately from Theorems \ref{finite01} and \ref{equinew2}.
\end{proof}

Now, we turn to introduce and characterize investigate the parallelism of $A$-bounded operators with respect to $\omega_A(\cdot)$.
\begin{definition}
Let $T,S\in\mathcal{B}_{A^{1/2}}(\mathcal{H})$. The operator $T$ is said to be $A$-numerical radius parallel to $S$ and we denote $T \parallel_{\omega_A} S$, if
\begin{align*}
\omega_A(T + \lambda S) = \omega_A(T)+\omega_A(S), \quad \text{for some}\,\, \lambda\in\mathbb{T}.
\end{align*}
\end{definition}

Now, we aim to extend Theorem \ref{th.1} to the class of $A$-bounded operators. To do this, we need the following lemma. Its proof follows immediately by following the proof of Lemma \ref{lempar01} and using Lemma \ref{imporeq2009}.
\begin{lemma}\label{lempar02}
Let $T,S\in \mathcal{B}_{A^{1/2}}(\mathcal{H})$. Then, the following assertions are equivalent:
\begin{itemize}
  \item [(1)] $T\parallel_{\omega_A}S$.
  \item [(2)] $\widetilde{T}\parallel_{\omega} \widetilde{S}$.
\end{itemize}
\end{lemma}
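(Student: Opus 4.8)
The plan is to transport the definition of $\omega_A$-parallelism through the isometric correspondence $X \mapsto \widetilde{X}$, exactly as was done for the seminorm-parallelism in Lemma \ref{lempar01}. The two ingredients are the linearity of this correspondence, which follows from the uniqueness clause of Proposition \ref{prop_arias}, and the identity $\omega_A(X) = \omega(\widetilde{X})$ furnished by Lemma \ref{imporeq2009}(2).

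First I would recall that since $T, S \in \mathcal{B}_{A^{1/2}}(\mathcal{H})$, Proposition \ref{prop_arias} yields unique operators $\widetilde{T}, \widetilde{S} \in \mathcal{B}(\mathbf{R}(A^{1/2}))$ satisfying $Z_A T = \widetilde{T} Z_A$ and $Z_A S = \widetilde{S} Z_A$. For any $\lambda \in \mathbb{C}$ one computes
$$
Z_A(T + \lambda S) = Z_A T + \lambda Z_A S = \widetilde{T} Z_A + \lambda \widetilde{S} Z_A = (\widetilde{T} + \lambda \widetilde{S}) Z_A,
$$
and since $T + \lambda S \in \mathcal{B}_{A^{1/2}}(\mathcal{H})$ it also satisfies $Z_A(T + \lambda S) = \widetilde{T + \lambda S}\, Z_A$. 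By the uniqueness in Proposition \ref{prop_arias}, this forces $\widetilde{T + \lambda S} = \widetilde{T} + \lambda \widetilde{S}$ for every $\lambda \in \mathbb{C}$.

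Next I would apply Lemma \ref{imporeq2009}(2) to each of the three operators involved, obtaining $\omega_A(T) = \omega(\widetilde{T})$, $\omega_A(S) = \omega(\widetilde{S})$, and, using the linearity just established, $\omega_A(T + \lambda S) = \omega(\widetilde{T + \lambda S}) = \omega(\widetilde{T} + \lambda \widetilde{S})$. Consequently the equality $\omega_A(T + \lambda S) = \omega_A(T) + \omega_A(S)$ holds for some $\lambda \in \mathbb{T}$ if and only if $\omega(\widetilde{T} + \lambda \widetilde{S}) = \omega(\widetilde{T}) + \omega(\widetilde{S})$ holds for the same $\lambda \in \mathbb{T}$. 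Unwinding the definitions of $\parallel_{\omega_A}$ and $\parallel_{\omega}$, this is precisely the assertion that $T \parallel_{\omega_A} S$ if and only if $\widetilde{T} \parallel_{\omega} \widetilde{S}$, which completes the argument.

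There is no genuine obstacle here: the proof is a formal translation through an isometric isomorphism, and the only point requiring a word of care is the linearity of the map $X \mapsto \widetilde{X}$, which is not postulated directly but is extracted from the uniqueness statement of Proposition \ref{prop_arias}. Everything else is an immediate consequence of Lemma \ref{imporeq2009}, mirroring the structure of the proof of Lemma \ref{lempar01} with $\|\cdot\|_A$ replaced by $\omega_A$ and the operator norm replaced by the numerical radius on $\mathbf{R}(A^{1/2})$.
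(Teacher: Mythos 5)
Your proof is correct and is exactly the argument the paper intends: the paper's own justification is that the lemma ``follows immediately by following the proof of Lemma \ref{lempar01} and using Lemma \ref{imporeq2009},'' i.e.\ establishing $\widetilde{T+\lambda S}=\widetilde{T}+\lambda\widetilde{S}$ via the uniqueness clause of Proposition \ref{prop_arias} and then transporting through $\omega_A(X)=\omega(\widetilde{X})$. Indeed, your write-up is more detailed than the paper's, which leaves these steps implicit.
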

Now, we are in a position to prove one of our main results in this section which generalizes Theorem \ref{th.1}.
\begin{theorem}\label{main2}
Let $T,S\in \mathcal{B}_{A^{1/2}}(\mathcal{H})$. Then the following conditions are equivalent:
\begin{itemize}
\item[(1)] $T \parallel_{\omega_A} S$.
\item[(2)] There exists a sequence of $A$-unit vectors $\{x_n\}$ in $\mathcal{H}$ such that
\begin{equation}\label{importanteqf}
\lim_{n\rightarrow\infty} \big|\langle Tx_n\mid x_n\rangle_A\langle Sx_n\mid x_n\rangle_A\big| = \omega_A(T)\omega_A(S).
\end{equation}
\end{itemize}
\end{theorem}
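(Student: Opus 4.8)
The plan is to reduce the statement to the already-known Hilbert space case via the transfer machinery that has been set up throughout the paper. Specifically, I would invoke \Cref{lempar02}, which states that $T\parallel_{\omega_A}S$ if and only if $\widetilde{T}\parallel_{\omega}\widetilde{S}$ on the Hilbert space $\mathbf{R}(A^{1/2})$, and then apply the classical characterization in \Cref{th.1} to the operators $\widetilde{T}$ and $\widetilde{S}$. By \Cref{prop_arias}, since $T,S\in\mathcal{B}_{A^{1/2}}(\mathcal{H})$ there exist unique $\widetilde{T},\widetilde{S}\in\mathcal{B}(\mathbf{R}(A^{1/2}))$ with $Z_AT=\widetilde{T}Z_A$ and $Z_AS=\widetilde{S}Z_A$.

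The forward direction would go as follows. Assuming $T\parallel_{\omega_A}S$, \Cref{lempar02} gives $\widetilde{T}\parallel_{\omega}\widetilde{S}$, so by \Cref{th.1} there is a sequence of unit vectors $\{y_n\}$ in $\mathbf{R}(A^{1/2})$ with $\lim_{n\to\infty}\big|(\widetilde{T}y_n, y_n)(\widetilde{S}y_n, y_n)\big|=\omega(\widetilde{T})\omega(\widetilde{S})$. Since $\mathcal{R}(A)$ is dense in $\mathbf{R}(A^{1/2})$, each $y_n$ can be approximated by elements $Ax_n$, and after normalizing using \eqref{usefuleq01} I would produce $A$-unit vectors $x_n\in\mathcal{H}$. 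The identities $(\widetilde{T}Ax_n, Ax_n)=(Z_ATx_n, Ax_n)=(ATx_n, Ax_n)=\langle Tx_n\mid x_n\rangle_A$ from \eqref{usefuleq001} (and likewise for $S$) convert the limit on $\mathbf{R}(A^{1/2})$ into \eqref{importanteqf}, while \Cref{imporeq2009}(2) identifies $\omega(\widetilde{T})=\omega_A(T)$ and $\omega(\widetilde{S})=\omega_A(S)$. The converse is the same argument run backwards: a sequence of $A$-unit vectors satisfying \eqref{importanteqf} yields, via $y_n:=Ax_n$, a sequence of unit vectors in $\mathbf{R}(A^{1/2})$ realizing the condition of \Cref{th.1} for $\widetilde{T},\widetilde{S}$, hence $\widetilde{T}\parallel_{\omega}\widetilde{S}$, and then \Cref{lempar02} returns $T\parallel_{\omega_A}S$.

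The main technical obstacle I anticipate is the density approximation step: the unit vectors $y_n$ furnished by \Cref{th.1} live in the abstract completion $\mathbf{R}(A^{1/2})$ and need not be of the form $Ax_n$ for $x_n\in\mathcal{H}$. I would handle this exactly as in the proof of \Cref{norminraundemi01}, exploiting that $\mathcal{R}(A)$ is dense in $\mathbf{R}(A^{1/2})$ to choose $x_n\in\mathcal{H}$ with $\|Ax_n-y_n\|_{\mathbf{R}(A^{1/2})}$ small, rescaling by $\|Ax_n\|_{\mathbf{R}(A^{1/2})}\to 1$, and using the continuity of $\widetilde{T}$, $\widetilde{S}$ together with the boundedness supplied by \Cref{imporeq2009}(1) to pass the limit through. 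In finite dimensions this difficulty evaporates since $\mathcal{R}(A)=\mathcal{R}(A^{1/2})=\mathbf{R}(A^{1/2})$, but I would present the argument in the general setting. Once the approximation is controlled, the rest is a bookkeeping matter of translating inner products and radii back and forth between $\mathcal{H}$ and $\mathbf{R}(A^{1/2})$ through \eqref{usefuleq001}, \eqref{usefuleq01}, and \Cref{imporeq2009}.
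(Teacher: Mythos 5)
Your proposal is correct, and your direction (2)$\Longrightarrow$(1) is exactly the paper's: set $y_n=Ax_n$, use \eqref{usefuleq01} and \eqref{usefuleq001} to convert \eqref{importanteqf} into the condition of \Cref{th.1} for $\widetilde{T},\widetilde{S}$, then pull back through \Cref{lempar02}. Where you genuinely diverge is (1)$\Longrightarrow$(2): the paper does \emph{not} transfer this direction to $\mathbf{R}(A^{1/2})$ at all, and thereby never meets the density obstacle you spend your final paragraph on. Instead it argues directly in $\mathcal{H}$: from $\omega_A(T+\lambda S)=\omega_A(T)+\omega_A(S)$ it extracts, by the supremum definition of $\omega_A$, a sequence of $A$-unit vectors $x_n$ with $|\langle (T+\lambda S)x_n\mid x_n\rangle_A|\to\omega_A(T+\lambda S)$, expands $|\langle (T+\lambda S)x_n\mid x_n\rangle_A|^2$, and squeezes: $\bigl(\omega_A(T)+\omega_A(S)\bigr)^2\leq \omega_A^2(T)+2\lim_n\bigl|\langle Tx_n\mid x_n\rangle_A\langle Sx_n\mid x_n\rangle_A\bigr|+\omega_A^2(S)\leq\bigl(\omega_A(T)+\omega_A(S)\bigr)^2$, which forces the cross term to equal $\omega_A(T)\omega_A(S)$. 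Your transfer route does work: the approximation you sketch (approximate the unit vectors $y_n$ from \Cref{th.1} by elements of $\mathcal{R}(A)$, rescale, and use continuity of $y\mapsto(\widetilde{T}y,y)(\widetilde{S}y,y)$, with a diagonal extraction across $n$) is exactly the technique the paper already deploys in \Cref{norminraundemi01}, so nothing beyond known machinery is needed. The trade-off: your approach is methodologically uniform, reducing both directions to the single Hilbert-space result \Cref{th.1}; the paper's direct squeeze is more elementary, avoids the completion subtleties entirely, and produces the witnessing sequence in $\mathcal{H}$ with no approximation, which also makes the subsequent Remark (that the same sequence satisfies $\lim_n|\langle Tx_n\mid x_n\rangle_A|=\omega_A(T)$ and $\lim_n|\langle Sx_n\mid x_n\rangle_A|=\omega_A(S)$) immediate. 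Note, incidentally, that the paper's finite-dimensional corollary after \Cref{main2} is proved by precisely the compactness version of your transfer argument, so your method is in the spirit of the paper even where it departs from this particular proof.
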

\begin{proof}
(2)$\Longrightarrow$(1): Assume that there exists a sequence $(x_n)_n\subset\mathcal{H}$ such that $\|x_n\|_A=1$ and
    \begin{equation}\label{num2019}
\lim_{n\rightarrow\infty} \big|\langle Tx_n\mid x_n\rangle_A\langle Sx_n\mid x_n\rangle_A\big| = \omega_A(T)\omega_A(S).
    \end{equation}
By $y_n=Ax_n$ and using \eqref{usefuleq01}, we infer that $\|y_n\|_{\mathbf{R}(A^{1/2})}=\|Ax_n\|_{\mathbf{R}(A^{1/2})}=\|x_n\|_A=1$. Moreover, in view of \eqref{usefuleq001}, we have
$$\langle Tx_n\mid x_n\rangle_A\langle Sx_n\mid x_n\rangle_A=(\widetilde{T}y_n,y_n)(\widetilde{S}y_n,y_n).$$
So, by using \eqref{num2019} together with Lemma \ref{imporeq2009} we get
    \begin{equation*}
    \lim_{n\to \infty}|(\widetilde{T}y_n,y_n)(\widetilde{S}y_n,y_n)|=\omega(\widetilde{T})\omega(\widetilde{S}).
    \end{equation*}
This yields, by Theorem \ref{th.1}, that $\widetilde{T}\parallel_{\omega} \widetilde{S}$ which in turn implies, in view of Lemma \ref{lempar02}, that $T\parallel_{\omega_A}S$.

(1)$\Longrightarrow$(2): Let $T \parallel_{\omega_A} S$. Then there exists $\lambda\in\mathbb{T}$ such that
$\omega_A(T+\lambda S) = \omega_A(T)+\omega_A(S)$. On the other hand, by the definition of the $A$-numerical radius we have
\begin{align*}
\omega_A(T+\lambda S)=\sup\Big\{\big|\langle(T+\lambda S)x\mid x\rangle_A\big|: x\in \mathcal{H},\|x\|_A=1\Big\}.
\end{align*}
So, there exists a sequence $(x_n)_n\subset\mathcal{H}$ such that $\|x_n\|_A=1$ and
\begin{align*}
\lim_{n\rightarrow\infty} \big|\langle(T+\lambda S)x_n\mid x_n\rangle_A\big| = \omega_A(T+\lambda S).
\end{align*}
Moreover, since $|\langle Tx_n,x_n\rangle_A|\leq \omega_A(T)$ and $|\langle Sx_n,x_n\rangle_A|\leq \omega_A(S)$ for all $n$, then it follows that
\begin{align*}
\big(\omega_A(T)+\omega_A(S)\big)^2
& = \omega_A^2(T+\lambda S)\\
& = \lim_{n\rightarrow\infty} \big|\langle(T+\lambda S)x_n\mid x_n\rangle_A\big|^2\\
& = \lim_{n\rightarrow\infty}\Big(|\langle Tx_n\mid x_n\rangle_A|^2+ 2\mbox{Re}\big(\lambda\langle Tx_n\mid x_n\rangle_A \langle Sx_n\mid x_n\rangle_A\big)
+ |\langle S x_n\mid x_n\rangle_A|^2\Big)\\
&\leq\lim_{n\rightarrow\infty}\Big(|\langle Tx_n\mid x_n\rangle_A|^2+2\big|\langle Tx_n\mid x_n\rangle_A\langle Sx_n\mid x_n\rangle_A\big|+|\langle Sx_n\mid x_n\rangle_A|^2\Big)\\
&\leq \omega_A^2(T)+2\lim_{n\rightarrow\infty}\big|\langle Tx_n\mid x_n\rangle_A\langle Sx_n\mid x_n\rangle_A\big|+\omega_A^2(S)\\
&\leq\omega_A^2(T)+2\omega_A(T)\omega_A(S)+\omega_A^2(S)= \big(\omega_A(T)+\omega_A(S)\big)^2.
\end{align*}
Hence, we deduce that
\begin{align*}
\lim_{n\rightarrow\infty}\big|\langle Tx_n\mid x_n\rangle_A\langle Sx_n\mid x_n\rangle_A\big| = \omega_A(T)\omega_A(S).
\end{align*}
Therefore, the proof is complete.
\end{proof}
Before we move on, let us emphasize the following remark.
\begin{remark}
If there exists a sequence of $A$-unit vectors $\{x_n\}$ in $\mathcal{H}$ satisfying \eqref{importanteqf}, then it also satisfies $\displaystyle{\lim_{n\rightarrow\infty}}|\langle Tx_n\mid x_n\rangle_A| = \omega_A(T)$
and $\displaystyle{\lim_{n\rightarrow\infty}}|\langle Sx_n\mid x_n\rangle_A| = \omega_A(S)$. Indeed, if $AT=0$ or $AS=0$ then the result follows trivially. Assume that $AT\neq0$ and $AS\neq0$, then $\omega_A(T)\neq 0$ and $\omega_A(S)\neq 0$. So, it follows from
\begin{align*}
\omega_A(T)\omega_A(S)=\lim_{n\rightarrow\infty}\big|\langle Tx_n\mid x_n\rangle_A\langle Sx_n\mid x_n\rangle_A\big|
\leq \lim_{n\rightarrow\infty}|\langle Tx_n\mid x_n\rangle_A|\omega_A(S)\leq \omega_A(T)\omega_A(S)
\end{align*}
that $\displaystyle{\lim_{n\rightarrow\infty}}|\langle Tx_n\mid x_n\rangle_A| = \omega_A(T)$ and by using a similar argument,
$\displaystyle{\lim_{n\rightarrow\infty}}|\langle Sx_n\mid x_n\rangle_A| = \omega_A(S)$.
\end{remark}
 The following corollary is an immediate consequence of Theorem \ref{main2}. Its proof is similar to that of Theorem \ref{finite01} and so we omit it.
\begin{corollary}
Let $\mathcal{H}$ be a finite dimensional Hilbert space and $T,S\in \mathcal{B}_{A^{1/2}}(\mathcal{H})$. Then the following conditions are equivalent:
\begin{itemize}
\item[(1)] $T \parallel_{\omega_A} S$.
\item[(2)] There exists an $A$-unit vector $x\in\mathcal{H}$ such that
\begin{align*}
|\langle Tx\mid x\rangle_A\langle Sx\mid x\rangle_A| = \omega_A(T)\omega_A(S).
\end{align*}
\end{itemize}
\end{corollary}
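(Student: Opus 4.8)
The plan is to imitate the proof of Theorem~\ref{finite01} essentially verbatim, replacing the $A$-seminorm parallelism throughout by the $A$-numerical radius parallelism: where that argument invoked Lemma~\ref{lempar01} and Theorem~\ref{main1}, here I would invoke Lemma~\ref{lempar02} and Theorem~\ref{main2} (equivalently Theorem~\ref{th.1} transported to $\mathbf{R}(A^{1/2})$). The implication $(2)\Rightarrow(1)$ is immediate: if an $A$-unit vector $x$ satisfies $|\langle Tx\mid x\rangle_A\langle Sx\mid x\rangle_A|=\omega_A(T)\omega_A(S)$, then the constant sequence $x_n=x$ realizes condition~(2) of Theorem~\ref{main2}, whence $T\parallel_{\omega_A}S$.

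For $(1)\Rightarrow(2)$, assume $T\parallel_{\omega_A}S$. By Lemma~\ref{lempar02} this is equivalent to $\widetilde{T}\parallel_{\omega}\widetilde{S}$ in $\mathcal{B}(\mathbf{R}(A^{1/2}))$. Applying Theorem~\ref{th.1} to the Hilbert space $\mathbf{R}(A^{1/2})$, I obtain a sequence of unit vectors $(y_n)_n\subset\mathbf{R}(A^{1/2})$ with $\lim_{n\to\infty}|(\widetilde{T}y_n,y_n)(\widetilde{S}y_n,y_n)|=\omega(\widetilde{T})\omega(\widetilde{S})$. Since $\mathcal{H}$ is finite dimensional, so is $\mathbf{R}(A^{1/2})$, hence its closed unit sphere is compact; passing to a convergent subsequence $y_{n_k}\to y$ with $\|y\|_{\mathbf{R}(A^{1/2})}=1$ and using the continuity of the maps $z\mapsto(\widetilde{T}z,z)$ and $z\mapsto(\widetilde{S}z,z)$, I get $|(\widetilde{T}y,y)(\widetilde{S}y,y)|=\omega(\widetilde{T})\omega(\widetilde{S})$.

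The remaining step is to transport this attained equality back to $\mathcal{H}$. In finite dimensions $\mathcal{R}(A)=\mathcal{R}(A^{1/2})$, so there exists $x\in\mathcal{H}$ with $y=Ax$; by \eqref{usefuleq01} we have $\|x\|_A=\|Ax\|_{\mathbf{R}(A^{1/2})}=1$, so $x$ is an $A$-unit vector. Since $\widetilde{T}Ax=ATx$ by Proposition~\ref{prop_arias}, relation \eqref{usefuleq001} gives $(\widetilde{T}y,y)=(ATx,Ax)=\langle Tx\mid x\rangle_A$, and likewise $(\widetilde{S}y,y)=\langle Sx\mid x\rangle_A$. Combining this with $\omega_A(T)=\omega(\widetilde{T})$ and $\omega_A(S)=\omega(\widetilde{S})$ from Lemma~\ref{imporeq2009}, the attained equality becomes $|\langle Tx\mid x\rangle_A\langle Sx\mid x\rangle_A|=\omega_A(T)\omega_A(S)$, which is exactly assertion~(2).

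The only genuinely delicate point is this last pullback: the compactness argument in $\mathbf{R}(A^{1/2})$ produces a priori only a vector $y$ in that abstract completion, and one must know that $y$ in fact lies in $\mathcal{R}(A)$ so that a preimage $x$ exists. This is precisely where finite-dimensionality enters twice, once for compactness of the unit sphere and once for the identity $\mathcal{R}(A)=\mathcal{R}(A^{1/2})$, and it is what prevents the statement from holding verbatim in infinite dimensions, where one would recover only the sequential version supplied by Theorem~\ref{main2}.
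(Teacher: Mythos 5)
Your proposal is correct and takes essentially the same approach the paper intends: the paper omits the proof of this corollary, saying only that it is ``similar to that of Theorem~\ref{finite01}'', and your argument---replacing Lemma~\ref{lempar01} and the seminorm characterization by Lemma~\ref{lempar02} and Theorem~\ref{th.1} applied in $\mathbf{R}(A^{1/2})$, with compactness of the unit sphere and the identity $\mathcal{R}(A)=\mathcal{R}(A^{1/2})$ handling the finite-dimensional pullback---is precisely that adaptation. Your closing remark correctly isolates the two places where finite-dimensionality is used, so there is nothing to fix.
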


Recall from \cite{kais01} that an operator $T\in\mathcal{B}_A(\mathcal{H})$ is said to be $A$-hyponormal if
$$T^{\sharp}T\geq_ATT^{\sharp}\Leftrightarrow \langle A(T^{\sharp}T-TT^{\sharp})x\mid x\rangle\geq 0,\quad \forall\,x\in \mathcal{H}.$$
Now, we state the following proposition.
\begin{proposition}
Let $T,S\in \mathcal{B}_A(\mathcal{H})$ be two $A$-hyponormal operators such that $T \parallel_{\omega_A} S$. Then, $T \parallel_A S$.
\end{proposition}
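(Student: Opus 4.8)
The plan is to deduce the $A$-seminorm-parallelism from the $A$-numerical-radius-parallelism by passing through the sequence characterizations of Theorems \ref{main2} and \ref{main1}, the crucial extra ingredient being that $A$-hyponormal operators are $A$-normaloid.

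First I would record that both $T$ and $S$ are $A$-normaloid. Indeed, the defining inequality $T^\sharp T\geq_A TT^\sharp$ is equivalent to $\|T^\sharp x\|_A\leq\|Tx\|_A$ for every $x\in\mathcal{H}$; transporting this to $\mathbf{R}(A^{1/2})$ via \eqref{usefuleq01} and the identity $(\widetilde{T})^*=\widetilde{T^\sharp}$ (which follows from the construction of $\widetilde{T}$) shows that $\widetilde{T}$ is a hyponormal operator on the Hilbert space $\mathbf{R}(A^{1/2})$, hence normaloid, hence $T$ is $A$-normaloid by the characterization of \cite{kais01} already invoked in the proof of Theorem \ref{equinew}. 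By Proposition \ref{charactenormailoid} this gives $\omega_A(T)=\|T\|_A$ and, likewise, $\omega_A(S)=\|S\|_A$.

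Next, since $T\parallel_{\omega_A}S$, Theorem \ref{main2} furnishes a sequence $(x_n)_n$ of $A$-unit vectors with $\lim_n|\langle Tx_n\mid x_n\rangle_A\langle Sx_n\mid x_n\rangle_A|=\omega_A(T)\omega_A(S)$, and the remark following Theorem \ref{main2} yields $\lim_n|\langle Tx_n\mid x_n\rangle_A|=\omega_A(T)=\|T\|_A$ together with $\lim_n|\langle Sx_n\mid x_n\rangle_A|=\omega_A(S)=\|S\|_A$. Writing $\alpha_n=\langle Tx_n\mid x_n\rangle_A$ and $\beta_n=\langle Sx_n\mid x_n\rangle_A$, the identity $\|Tx_n-\alpha_n x_n\|_A^2=\|Tx_n\|_A^2-|\alpha_n|^2$ combined with the squeeze $|\alpha_n|\leq\|Tx_n\|_A\leq\|T\|_A$ coming from Cauchy--Schwarz and \eqref{ineqwich02} (together with $|\alpha_n|\to\|T\|_A$) forces $\|Tx_n\|_A\to\|T\|_A$ and hence $\|Tx_n-\alpha_n x_n\|_A\to0$; the same argument gives $\|Sx_n-\beta_n x_n\|_A\to0$.

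Finally I would expand $\langle Tx_n\mid Sx_n\rangle_A=\alpha_n\overline{\beta_n}+\langle \alpha_n x_n\mid Sx_n-\beta_n x_n\rangle_A+\langle Tx_n-\alpha_n x_n\mid \beta_n x_n\rangle_A+\langle Tx_n-\alpha_n x_n\mid Sx_n-\beta_n x_n\rangle_A$ and bound the last three terms by Cauchy--Schwarz, each tending to $0$ because it carries a factor $\|Tx_n-\alpha_n x_n\|_A\to0$ or $\|Sx_n-\beta_n x_n\|_A\to0$ while the remaining factors stay bounded. Since $|\alpha_n\overline{\beta_n}|\to\|T\|_A\|S\|_A$, this gives $\lim_n|\langle Tx_n\mid Sx_n\rangle_A|=\|T\|_A\|S\|_A$, and Theorem \ref{main1} then delivers $T\parallel_A S$. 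The main obstacle is the first paragraph: without $A$-normaloidity the remark only controls $|\langle Tx_n\mid x_n\rangle_A|$ up to $\omega_A(T)$, which is too weak to force $\|Tx_n\|_A\to\|T\|_A$, so the hyponormality hypothesis is used precisely to upgrade $\omega_A$ to $\|\cdot\|_A$. Alternatively, one could argue entirely in $\mathbf{R}(A^{1/2})$: the operators $\widetilde{T},\widetilde{S}$ are hyponormal, $\widetilde{T}\parallel_\omega\widetilde{S}$ by Lemma \ref{lempar02}, and the classical fact that $\omega$-parallel hyponormal operators are norm-parallel together with Lemma \ref{lempar01} gives the claim.
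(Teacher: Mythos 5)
Your argument is correct, but it takes a genuinely different and considerably heavier route than the paper's. The paper's proof is a direct squeeze with no sequences at all: citing \cite{kais01} for the fact that $A$-hyponormality gives $\omega_A(T)=\|T\|_A$ and $\omega_A(S)=\|S\|_A$, it takes the $\lambda\in\mathbb{T}$ witnessing $T\parallel_{\omega_A}S$ and chains $\omega_A(T)+\omega_A(S)=\omega_A(T+\lambda S)\leq\|T+\lambda S\|_A\leq\|T\|_A+\|S\|_A=\omega_A(T)+\omega_A(S)$, which forces $\|T+\lambda S\|_A=\|T\|_A+\|S\|_A$ with the same $\lambda$ serving for both parallelisms. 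You instead route through the sequence characterizations: Theorem \ref{main2} and the remark following it give $A$-unit vectors $x_n$ with $|\alpha_n|\to\omega_A(T)=\|T\|_A$ and $|\beta_n|\to\omega_A(S)=\|S\|_A$ (the normaloid equalities are exactly what upgrades the remark's limits from $\omega_A$ to the seminorm, as you note); the identity $\|Tx_n-\alpha_n x_n\|_A^2=\|Tx_n\|_A^2-|\alpha_n|^2$ together with Cauchy--Schwarz and \eqref{ineqwich02} then yields $\|Tx_n-\alpha_n x_n\|_A\to0$ and $\|Sx_n-\beta_n x_n\|_A\to0$, and your four-term expansion of $\langle Tx_n\mid Sx_n\rangle_A$ feeds Theorem \ref{main1}; each of these steps is sound, including the degenerate cases $AT=0$ or $AS=0$, which your limits handle automatically. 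Your route costs more but buys more: it shows that one and the same sequence witnesses both parallelisms, with $Tx_n$ and $Sx_n$ asymptotically proportional to $x_n$, and your first paragraph actually proves (via hyponormality of $\widetilde{T}$, using $(\widetilde{T})^*=\widetilde{T^\sharp}$ and the density of $\mathcal{R}(A)$ in $\mathbf{R}(A^{1/2})$) the normaloid fact $\omega_A(T)=\|T\|_A$ that the paper merely cites from \cite{kais01}. Your closing alternative --- lift everything to $\mathbf{R}(A^{1/2})$ via Lemmas \ref{lempar01} and \ref{lempar02} and quote the classical statement for hyponormal operators --- is essentially the paper's squeeze read in the lifted picture, since that classical statement is proved by the same three-line argument.
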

\begin{proof}
Notice first that since $T$ and $S$ are $A$-hyponormal operators, then by \cite{kais01} we have $\omega_A(T)=\|T\|_A$ and $\omega_A(S)=\|S\|_A$. On the other hand, since $T\parallel_{\omega_A} S$, then $\omega_A(T + \lambda S) = \omega_A(T) + \omega_A(S)$ for some $\lambda\in\mathbb{T}$. So, by using the fact that $\omega_A(X)\leq \|X\|_A$ for every $A$-bounded operator $X$, we get
\begin{align*}
\omega_A(T) + \omega_A(S)
&= \omega_A(T + \lambda S)\\
&\leq \|T + \lambda S\|_A\leq \|T\|_A + \|S\|_A = \omega_A(T) + \omega_A(S).
\end{align*}
Hence $\|T + \lambda S\|_A = \|T\|_A + \|S\|_A$ and therefore $T\parallel_A S$.
\end{proof}

As an immediate consequence of the above proposition we have the following result.
\begin{corollary}
Let $T,S\in \mathcal{B}_A(\mathcal{H})$ be two $A$-normal operators $($i.e. $T^{\sharp}T=TT^{\sharp}$ and $S^{\sharp}S=SS^{\sharp}$$)$ such that $T \parallel_{\omega_A} S$. Then, $T \parallel_A S$.
\end{corollary}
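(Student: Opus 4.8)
The plan is to reduce the statement directly to the preceding proposition by observing that every $A$-normal operator is $A$-hyponormal. Recall that $T\in\mathcal{B}_A(\mathcal{H})$ is $A$-hyponormal precisely when $\langle A(T^{\sharp}T-TT^{\sharp})x\mid x\rangle\geq 0$ for all $x\in\mathcal{H}$. Hence the first and only substantive step is to check that $A$-normality, namely the equality $T^{\sharp}T=TT^{\sharp}$, forces this inequality. This is immediate: if $T^{\sharp}T=TT^{\sharp}$, then $A(T^{\sharp}T-TT^{\sharp})=0$, so $\langle A(T^{\sharp}T-TT^{\sharp})x\mid x\rangle=0\geq 0$ for every $x\in\mathcal{H}$, which is exactly the defining condition for $T$ to be $A$-hyponormal. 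The same argument applied to $S$ shows that $S$ is $A$-hyponormal as well.

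Having established that both $T$ and $S$ are $A$-hyponormal, the second step is simply to invoke the previous proposition: since $T$ and $S$ are $A$-hyponormal operators with $T\parallel_{\omega_A}S$, that proposition yields $T\parallel_A S$, which is the desired conclusion. I do not anticipate any genuine obstacle here, as the corollary is a specialization of the proposition to the subclass of $A$-normal operators inside the class of $A$-hyponormal ones; the entire content of the proof is the elementary remark that equality in $T^{\sharp}T=TT^{\sharp}$ is a particular case of the order inequality $T^{\sharp}T\geq_A TT^{\sharp}$. The only point worth a sentence of care is to phrase this inclusion cleanly, so that the appeal to the proposition is fully justified without re-deriving the norm-parallelism estimate already carried out there.
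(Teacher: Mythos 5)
Your proof is correct and matches the paper's approach exactly: the paper derives this corollary as an immediate consequence of the preceding proposition on $A$-hyponormal operators, the implicit step being precisely your observation that $T^{\sharp}T=TT^{\sharp}$ yields $\langle A(T^{\sharp}T-TT^{\sharp})x\mid x\rangle=0\geq 0$ for all $x\in\mathcal{H}$, so every $A$-normal operator is $A$-hyponormal. Nothing is missing.
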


\end{document}


\documentclass[12pt,reqno]{amsart}
\usepackage{etoolbox}

   \makeatletter

 \patchcmd{\@setaddresses}{\scshape\ignorespaces}{\ignorespaces}{}{} 

\appto\maketitle{%
\let\@makefnmark\relax  \let\@thefnmark\relax
\ifx\@empty\addresses\else\@footnotetext{%
  \vskip-\bigskipamount\@setaddresses}
  }
\def\enddoc@text{}
\makeatother

\makeatletter
\patchcmd\maketitle
  {\uppercasenonmath\shorttitle}
  {}
  {}{}
\patchcmd\maketitle
  {\@nx\MakeUppercase{\the\toks@}}
  {\the\toks@}
  {}
  {}{}
\patchcmd\@settitle{\uppercasenonmath\@title}{\Large}{}{}
\patchcmd\@setauthors
  {\MakeUppercase{\authors}}
  {\authors}
  {}{}
\makeatother
\usepackage{amsmath,amssymb,amsthm}
\usepackage{color}
\usepackage{url}
\usepackage{tikz-cd}
\usepackage[utf8]{inputenc}
\usepackage[T1]{fontenc}
\usepackage{geometry}
\geometry{left=2cm,right=2cm,top=2cm,bottom=2cm}
\newtheorem{theorem}{Theorem}[section]
\newtheorem{definition}{Definition}[section]
\newtheorem{definitions}{Definitions}[section]
\newtheorem{notation}{Notation}[section]
\newtheorem{corollary}{Corollary}[section]
\newtheorem{proposition}{Proposition}[section]
\newtheorem{lemma}{Lemma}[section]
\newtheorem{remark}{Remark}[section]
\newtheorem{example}{Example}[section]
\numberwithin{equation}{section}
\usepackage[colorlinks=true]{hyperref}  
\hypersetup{urlcolor=blue, citecolor=red, linkcolor=blue}

\usepackage[capitalise,noabbrev,nameinlink]{cleveref}      
\begin{document}
\address{$^{[1]}$ University of Sfax, Tunisia.}
\email{\url{kais.feki@hotmail.com}}

\address{$^{[2]}$ Mathematics Department, College of Science, Jouf University, P.O. Box 2014, Sakaka, Saudi Arabia.}
\email{\url{sidahmed@ju.edu.sa}}

\subjclass[2010]{Primary 46C05, 47A12; Secondary 47B65, 47A12.}

\keywords{Semi-inner product, Davis-Wielandt shells, numerical radius, normaloid operator, norm-parallelism, Davis-Wielandt radius}

\date{\today}
\author[Kais Feki and Sid Ahmed Ould Ahmed Mahmoud] {\Large{Kais Feki}$^{1}$ and \Large{Sid Ahmed Ould Ahmed Mahmoud}$^{2}$ }
\title[Davis-Wielandt shells of semi-Hilbertian space operators and its applications]{Davis-Wielandt shells of semi-Hilbertian space operators and its applications}

\maketitle